\newtheorem{theorem}{Theorem}[section]
\newtheorem{proposition}[theorem]{Proposition}
\newtheorem{prop}[theorem]{Proposition}
\newtheorem{lemma}[theorem]{Lemma}
\newtheorem*{theorem*}{Theorem}
\theoremstyle{definition}
\newtheorem{definition}[theorem]{Definition}
\newtheorem{remark}[theorem]{Remark}
\newtheorem*{remarks}{Remarks}
\newcommand{\CP}{\mathbb{CP}}
\newcommand{\la}{\langle}
\newcommand{\ra}{\rangle}
\newcommand{\too}{\longrightarrow}
\newcommand{\RE}{{\mathbb R}}
\newcommand{\CX}{{\mathbb C}}
\newcommand{\R}{{\mathbb R}}
\newcommand{\Z}{\mathbb Z}
\newcommand{\bZ}{\mathbb{Z}}
\newcommand{\beq}{\begin{equation}}
\newcommand{\eeq}{\end{equation}}
\newcommand{\cA}{\mathcal{A}}
\newcommand{\cB}{\mathcal{B}}
\DeclareMathOperator{\Sp}{Sp}
\DeclareMathOperator{\GL}{GL}
\DeclareMathOperator{\SU}{SU}
\DeclareMathOperator{\SO}{SO}
\newcommand{\U}{{\mathrm U}}
\newcommand{\G}{{\mathrm G}}
\DeclareMathOperator{\Spin}{Spin}
\newcommand{\we}{\wedge}
\DeclareMathOperator\vol{vol}
\DeclareMathOperator\id{Id}
\DeclareMathOperator\Id{Id}
\newcommand{\frs}{\mathfrak{s}}
\newcommand{\su}{\mathfrak{su}}
\DeclareMathOperator\re{Re}
\DeclareMathOperator\im{Im}
\newcommand{\dc}{d^{\,c}}
\definecolor{darkGreen}{RGB}{0,104,0}
\numberwithin{equation}{section}
\title[Nearly parallel $\G_2$-manifolds]
{Nearly parallel $\G_2$-manifolds: formality and associative submanifolds}
\date{\today}
\author{Marisa Fern\'andez}
\address{Departamento de Matem\'aticas, Facultad de Ciencia y Tecnolog\'{\i}a, Universidad del Pa\'{\i}s Vasco  (UPV/EHU), 
Apartado 644, 48080 Bilbao, Spain}
\email{marisa.fernandez@ehu.es}
\author{Anna Fino}
\address{Dipartimento di Matematica \lq\lq Giuseppe Peano\\Universit\`a di Torino\\
Via Carlo Alberto 10\\
10123 Torino, Italy\\
\& Department of Mathematics and Statistics\\
Florida International University,  Miami  33199, USA}
 \email{annamaria.fino@unito.it, afino@fiu.edu}
\author{Alexei Kovalev}
\address{DPMMS, University of Cambridge\\
Centre for Mathematical Sciences,\\
Wilberforce Road\\
Cambridge CB3 0WB, UK}
\email{A.G.Kovalev@dpmms.cam.ac.uk}
\author{Vicente Mu\~noz}
\address{Facultad de Ciencias Matem\'aticas\\
Universidad Complutense de Madrid\\
Plaza Ciencias 3, Ciudad Universitaria\\
28040 Madrid, Spain}
\email{vicente.munoz@ucm.es}
\subjclass[2010]{53C25, 53C30, 53B25, 53C40}
\keywords{nearly parallel $\G_2$-structures, formality, associative $3$-folds, mapping tori}
\begin{document}
\begin{abstract} 
We construct new examples of  non-formal 
simply connected compact Sasaki--Einstein  $7$-manifolds. 
We determine the minimal model of the
total space of any fibre bundle over $\CP^2$ with fibre $S^1\times S^2$ or
$S^3/\Z_p$ ($p>0$), and we apply this to conclude that the Aloff--Wallach spaces are formal.
We also find examples of formal manifolds and non-formal manifolds, which are locally conformal parallel $\Spin(7)$-manifolds.

On the other hand, we construct associative
minimal submanifolds in the Aloff--Wallach spaces and in any regular Sasaki--Einstein  $7$-manifold; in particular,
in the space $Q(1,1,1)=\big(\SU(2) \times \SU(2) \times\SU(2)\big)/ \big(\U(1) \times\U(1)\big)$ with the natural $S^1$-family of 
nearly parallel $\G_2$-structures induced by the Sasaki--Einstein structure. In each of those cases, we obtain a family of non-trivial 
associative deformations.
\end{abstract}

\maketitle

\section{Introduction}\label{sec:intro}

A nearly parallel  $\G_2$-structure on a $7$-manifold
$P$  is given by a positive  $3$-form  $\varphi$ satisfying the differential equation
$d \varphi =  \tau_0 \star  \varphi$, for some non-zero constant  $\tau_0$, 
where  $\star$ is the Hodge star operator determined by the induced metric by $\varphi$.
Nearly parallel $\G_2$-manifolds were introduced as manifolds with weak holonomy $\G_2$ by Gray in \cite{Gray71}.

In  \cite{BFGK}, it was shown that there is a 1-1 correspondence between nearly
parallel $\G_2$-manifolds and Killing spinors. As a consequence, the metric $g$  induced by the $3$-form  $\varphi$ has to be Einstein 
with positive scalar curvature ${\mathrm{scal}}(g) = \frac{21}{8} \tau_{0}^2$. Thus,
if $(P, g)$ is complete, then $P$ is a compact manifold with finite fundamental group due to Myers' theorem.

Another equivalent description of nearly parallel  $\G_2$-structures is in terms of the 
metric cone $(\widehat P = P \times \R^+ , g^c =  r^2 g   + dr^2)$, which has to have holonomy contained in 
$\Spin(7)$,  viewed  as subgroup of $\SO(8)$. If $(P, g)$ is simply connected and not isometric to the standard sphere, then there are 
three possible cases: the holonomy of $(\widehat P, g^c)$ 
 is $\Sp(2)$ or, equivalently, $(P,g)$ is a $3$-Sasakian manifold; the holonomy of $(\widehat P, g^c)$ can be $\SU(4)$
 if and only if $(P,g)$ is a Sasaki--Einstein manifold; or the holonomy of $(\widehat P, g^c)$ coincides with $\Spin(7)$, in 
 which case the $\G_2$-structure is called {\em proper}. 
 We recall that these three cases correspond to the existence of a space of Killing spinors of dimension  $3, 2$ or $1$,  respectively (see \cite{FKMS}). 
 In particular, by  \cite{AS}, a  Sasaki--Einstein $7$-manifold has a canonical $S^1$-family of nearly parallel $\G_2$-structures
inducing the Sasaki--Einstein metric. 
 Moreover, by \cite{GS}, a $3$-Sasakian $7$-manifold carries a second Einstein metric
 induced by a proper nearly parallel $\G_2$-structure (see Section \ref{nearly-parallel G2} for details). 
 
 A celebrated result of Deligne, Griffiths, Morgan and Sullivan states that any compact K\"ahler manifold is formal \cite{DGMS}.
In the same spirit, formality of a manifold is related to the existence of suitable geometric structures on the manifold (see Section \ref{sect:formality} for details about formality).

Nearly parallel  $\G_2$-manifolds are in various ways rather similar  to nearly K\"ahler $6$-manifolds.
Such a manifold is also Einstein, it has a Killing spinor, and its metric cone 
has holonomy contained in $\G_2$. Simply connected compact manifolds of dimension $\leq 6$ are always formal
\cite{N-Miller}.
Even more, any simply connected compact  nearly K\"ahler manifold is formal by \cite{AT}. In contrast, an example of a non-formal simply connected compact nearly parallel $\G_2$-manifold is given  by  the simply connected compact  
Sasaki--Einstein  manifold $Q(1,1,1)=\big(\SU(2) \times \SU(2) \times\SU(2)\big)/ \big(\U(1) \times\U(1)\big)$  (see \cite[Theorem 3.2]{BFMT}). 
The space $Q(1,1,1)$ is the only example, known in the literature, of a nearly parallel $\G_2$-manifold satisfying all those properties.

The goal of this paper is two-fold. Our first  purpose is to study the formality of nearly parallel $\G_2$-manifolds. 
For simply connected compact $3$-Sasakian $7$-manifolds, that study was  already previously considered in \cite{FIM}, so that 
 we  focus on  $7$-dimensional Sasaki--Einstein  manifolds and proper nearly parallel $\G_2$-manifolds. 

In Section \ref{sect:formalG2} (Theorem~\ref{th:pezzo-nonformal-sasa-einst}), we construct new examples of non-formal simply connected compact Sasaki--Einstein $7$-manifolds.
Such a manifold  $S_k$ $(3 \leq k \leq 8)$ is obtained as the total space of a principal circle bundle  
 over the Sasaki--Einstein manifold $X_k = P_k \times S^2$, where  
 $P_k={\CP}{}^2\#\, k \overline{\CP}{}^2$ is a del Pezzo surface. We prove that the manifold $S_k$ is non-formal because it has a non-zero triple Massey product.
   
In Section \ref{sec:extra}, we study the formality of the  Berger space and of Aloff-Wallach spaces 
considering both as fibrations. More precisely, in Theorem \ref{th:formal-Berger}
we find  the minimal model of the total space of any principal $S^3$-bundle over $S^4$ and  show that such a space is formal. We use this result to prove  that the  Berger space 
and $S^7$ have the same minimal model, and so the Berger space is formal. 
For the Aloff-Wallach spaces $W_{k,l}$, by \cite{GuS}, we know that they are the total space 
of a $F$-fiber bundle over $\CP^2$, where $F=S^3/{\mathbb{Z}}_p$ with $p > 0$ if $p=k+l\not=0$, or $F=S^1 \times S^2$ if $k+l\not=0$. 
We determine the minimal model of the total space of such a fiber bundle, and we prove that such a space is formal. 
Moreover, we show that $W_{k,l}$ and the product manifold $S^2 \times S^5$ have the same minimal model (Theorem  \ref{th:formal-AlofW1} and Theorem \ref{th:formal-AlofW2}). 
The formality of the Berger space and Aloff-Wallach spaces  was previously proved by a different 
 method~\cite{K,KT1}. In fact, in ~\cite{K} it is proved that the Berger space is formal because it is {\em geometrically formal} (that is,
it has a Riemannian metric such that all wedge products of harmonic forms are harmonic). In ~\cite{KT1}, it is proved that
the spaces $W_{k,l}$ are formal since they are homogeneous spaces of Cartan type.
 
On the other hand, if $(P, \varphi)$ is a nearly parallel $G_2$-manifold, then the
torsion-free $\Spin(7)$-structure on the metric cone $(\widehat P, g^c)$  gives rise to  a canonical closed $4$-form which defines a 
calibration on $\widehat P$. A 3-dimensional submanifold $Y$ in $(P, \varphi)$ is called {\em associative} if $\varphi|_Y = \text{vol}_Y$ or,
equivalently, if its cone $\widehat{Y}$ in $\widehat P$ is calibrated by the
$4$-form (see  \cite[Lemma 2.10]{Kawai2}).
By \cite{BallM}, associative submanifolds in nearly parallel  $\G_2$-manifolds  $P$ are minimal and their infinitesimal deformations were considered by Kawai 
in \cite{Kawai2}. 

Associative  $3$-folds have been studied by 
Lotay \cite{Lotay} when $P$ is the  standard $7$-sphere, by Kawai \cite{Kawai} when $P$ is the squashed $7$-sphere,
and by Ball and Madnick  when $P$ is the Berger space \cite{BallM2},
the squashed $7$-sphere and the squashed exceptional
Aloff--Wallach space $W_{1,1}$ \cite{BallM3}. 
However, nothing seems to be known for other nearly-parallel $\G_2$-manifolds.

Noting the above, the  second goal of the paper is to provide constructions of associative submanifolds 
in a regular Sasaki--Einstein manifold and in the  Aloff--Wallach spaces. In Section \ref{3folds.SE}, we study such submanifolds 
in the case of a regular Sasaki--Einstein manifold (Propositions \ref{invar.assoc} and Theorem~\ref{Legendrian}).
In particular, we consider the case of the  compact Sasaki--Einstein space 
$Q(1,1,1)$, which is the total space of a  principal $S^1$-bundle over $S^2 \times S^2 \times S^2$.
We  first make explicit the 
canonical $S^1$-family  $\varphi_t$ of nearly parallel $\G_2$-structures  associated to the Sasaki--Einstein structure 
on $Q(1,1,1)$ (see Section \ref{ex:nearlyparallel}). Then, considering the Calabi--Yau cone 
over $Q(1,1,1)$,  we construct minimal associative $3$-folds in $Q(1,1,1)$, and we 
determine  a family of non-trivial associative deformations (Propositions~\ref{SL.torus} and \ref{assoc.torus}).  
We also study minimal associative $3$-folds in the  Aloff--Wallach spaces (Theorem~\ref{assoc.fibr}).

Given a nearly parallel $\G_2$-manifold $P$, the product manifold $P\times S^1$ and,  more generally,  the  mapping torus of $P$ by 
a  diffeomorphism preserving the nearly parallel $\G_2$-structure,  carries a natural locally conformal parallel $\Spin(7)$-structure. 
In Section \ref{sect:Spin(7)}, we use the mapping torus construction to the nearly-parallel  
 $\G_2$-manifolds $(Q(1,1,1), \varphi_t)$, the Berger space and the Aloff--Wallach spaces, and
we construct examples of formal and non-formal locally conformal parallel
$\Spin(7)$-manifolds (Propositions~\ref{prod.non-formal} -- Proposition \ref{prop:maptorus-AloffW}).
 
The  paper is organized as follows. In Section \ref{nearly-parallel G2} and Section \ref{sect:formality}, we briefly review properties on 
nearly parallel $\G_2$-structures and minimal models, respectively,  that we need for our results.
Then, in Section  \ref{sect:formalG2}  and Section  \ref{sec:extra},  we  examine  the formality of nearly parallel $\G_2$-manifolds. 
In  Section \ref{ex:nearlyparallel}, we  apply the construction of the canonical $S^1$-family of nearly parallel $\G_2$-structures    to the 
{Sasaki--Einstein}  manifold $Q(1,1,1)$. Section \ref{3folds.SE} is devoted to  the construction of minimal associative $3$-folds in 
$7$-dimensional regular Sasaki--Einstein manifolds and, in particular, in  $Q(1,1,1)$. Deformations of  minimal associative $3$-folds in any
$7$-dimensional regular Sasaki--Einstein manifold are also considered in Section \ref{3folds.SE}.
In  Section  \ref{assAW} we study the associative $3$-dimensional submanifolds  of  the  Aloff--Wallach spaces.  
Finally, Section \ref{sect:Spin(7)} contains  the  construction  of  
locally conformal parallel $\Spin(7)$-manifolds as mapping tori of each of the following spaces: $(Q(1,1,1), \varphi_t)$, the Berger space and the Aloff--Wallach spaces.

\section{Nearly parallel $\G_2$-structures}\label{nearly-parallel G2}

In this section, we recall the canonical $S^1$-family of nearly parallel $\G_2$-structures
that exists on any Sasaki--Einstein $7$-manifold \cite{AS, BGal, FK}, and the proper 
nearly parallel $\G_2$-structure  on any  $3$-Sasakian $7$-manifold \cite{FKMS, GS} (see \cite{BG2} for more details).

A $7$-manifold $P$ has a $\G_2$-structure if there is a reduction of the structure group of 
its frame bundle from ${\GL}(7,\mathbb{R})$ to the exceptional Lie group $\G_2$. 
By \cite{Gr69}, a $7$-manifold $P$ carries $\G_2$-structures
if and only if $P$ is orientable and spin.

The presence of a $\G_2$-structure is equivalent to the existence of a differential $3$-form $\varphi$ 
on $P$, which can be described locally as
$$
\varphi=e^{127}+e^{347}+e^{567}+e^{135}-e^{146}-e^{236}-e^{245},
$$ 
with respect to some basis $\{e^1,\dotsc, e^7\}$ of the (local) $1$-forms on $P$. Here, $e^{127}$ stands for $e^1\wedge e^2\wedge e^7$, and so on.

Since $\G_2 \subset \SO(7)$, a $\G_2$-structure $\varphi$ on $P$ determines a Riemannian metric $g=g_\varphi$ and an orientation on $P$ 
such that
\begin{equation}\label{metric}
 g (U,V)\,\vol=\frac 16 (U\lrcorner\varphi) \wedge (V\lrcorner\varphi) \wedge \varphi,
\end{equation}
for any vector fields 
$U, V$ on $P$, where $\vol$ is the volume form on $P$.

A $\G_2$-structure $\varphi$ on a $7$-manifold $P$ is 
said to be {\em nearly parallel} if there exists a non-zero real number $\tau_0$ such that
  \begin{equation}\label{g2np}
 d\varphi=\tau_0 \star\varphi,
  \end{equation}
where  $\star$ is the Hodge star operator determined by the metric $g_{\varphi}$ and the volume form on $P$ induced by $\varphi$. A $7$-manifold $P$ with a nearly parallel $\G_2$-structure is called a {\em nearly parallel $\G_2$-manifold}.

\subsection*{Sasaki--Einstein and $3$-Sasakian manifolds}\label{SE.and.3S}
Let us recall that  a Riemannian manifold $(S,g)$ of dimension $2n + 1$  is called {\em Sasakian} if its 
cone $(S\times{\mathbb{R}}^+, g^c = r^2 g+dr^2)$ is K\"ahler,
that
is the cone metric $g^c$ admits a compatible
integrable almost complex structure $J$ so that
$(S\times{\mathbb{R}}^+, g^c, J)$ is  K\"ahler.
 In this case the Reeb vector field $\xi\,=\,J \partial_r$ on $S$ is
a  unit Killing vector field. The corresponding $1$-form
$\eta$ defined by 
$\eta(U)\,=\,g(\xi,\,U)$, 	for any vector field $U$ on $S$, is a {\em contact form}, meaning
$\eta\wedge ({d} \eta)^n\,\not=\,0$ at every point of $S$.
The K\"ahler form on the cone can be expressed as
\begin{equation}\label{K.cone}
\omega^c=\frac12 d(r^2\eta),
\end{equation}
where we extended $\eta$ (and $\xi$) to
$S\times\RE^+$ by $\eta(U)=\frac1{r^2}g^c(J(r\partial_r),U)$.
Let $\nabla$ be the Levi--Civita connection of $g$. The endomorphism $\Phi$
of the tangent bundle $TS$ of $S$, given by $\Phi U=\nabla_U\xi$,
satisfies the following identities 
\begin{equation} \label{def:almostcontact-st} 
\Phi^2=-\Id+\eta\otimes\xi, \\\ g(\Phi U,\Phi V)=g(U,V)-\eta(U)\eta(V), 
\\\ d\eta(U,V)=2g(\Phi U,V),
\end{equation}
for any vector fields $U,V$ on $S$. If the integral curves of the Reeb vector field $\xi$ are closed,
hence circles, then $\xi$ integrates to a locally free isometric action of
$S^1$ on $(S,g)$. A Sasakian manifold is called {\em regular} when this latter $S^1$-action is free.

A Sasakian manifold $(S,g)$ of dimension $2n + 1$ is said to be {\em Sasaki--Einstein} if the cone metric $g^c \,=\, r^2 g+dr^2$
on $S\times{\mathbb{R}}^+$ is K\"ahler and Ricci-flat or, equivalently,
the restricted holonomy group of  $g^c$ is contained in
\mbox{$\SU(n+1)$.} Thus, the Sasakian metric $g$ is Einstein with Einstein constant $2n$.

 Compact regular Sasaki--Einstein manifolds 
are principal circle bundles over compact K\"ahler--Einstein manifolds with positive scalar curvature, and whose
 K\"ahler form defines a cohomology class proportional to an integral cohomology
class \cite{BG-2000,FK,Kobayashi} (therefore, the base manifold is a simply connected projective algebraic variety, more precisely a
Fano variety). The converse also holds. In fact, the following result is proved in \cite[Theorem 2.8]{BG-2000}.
Let $(X,g_X)$ be a compact K\"ahler--Einstein manifold with positive scalar curvature, and whose K\"ahler form $\omega_X$ 
defines an integral cohomology class, and let $S$ be the total space of the circle bundle 
\begin{equation}\label{epi}
S^1 \,\hookrightarrow\, S \,\stackrel{\pi}{\longrightarrow}\, X
\end{equation}
with Euler class $[\omega_X]\in H^2(X,\mathbb{Z})$. Then,
$S$ with the metric $g = \pi^*g_X + \eta\otimes\eta$ is a (regular)
Sasaki--Einstein manifold, whose contact form $\eta$ satisfies the equation $d\eta= 2 \pi^*\omega_X$, where 
$\pi$ is the projection in \eqref{epi}. 

Let $(S,g)$ be a Sasaki--Einstein manifold of dimension $7$ with contact
form~$\eta$. Then, 
according to \cite[pp.~723--724]{AS}, $S$ has an $S^1$-family of nearly
parallel $\G_2$-structures $\varphi_t$, which are given by
\begin{equation} \label{def:G2form-sasaki-eintein}
\varphi_{t} = \Omega \wedge  \eta + \cos t \,\Psi_{+} + \sin t  \, \Psi_{-}.
\end{equation} 
Here $\Omega$ is the horizontal
K\"ahler form related to the Ricci-flat K\"ahler form $\omega^c$ on the cone
$S\times{\mathbb{R}}^+$ via
$\Omega\we\eta=\frac12\partial_r\lrcorner(\omega^c \we \omega^c)|_{r=1}$,
equivalently $\Omega \,=\, \pi^*\omega_X$ in the case when $(S,g)$ is regular and $\pi \colon S  \to X$ is the projection
of the principal circle bundle.
Further, $\Psi = \Psi_{+} + i \Psi_{-}
=\partial_r\lrcorner\widehat\Psi|_{r=1}$ is a horizontal complex volume form,
where $\widehat\Psi$ denotes a holomorphic \mbox{$4$-form} of unit length
on $S\times{\mathbb{R}}^+$.
Now, a straightforward computation using \eqref{metric} shows that every
$\varphi_{t}$ induces the Sasaki--Einstein metric $g$ on~$S$.

\begin{remark}
Note that the expression for $\varphi_{t}$ given in \cite{AS} (where the authors write $\sigma_t$ instead of  $\varphi_{t}$) is
$$
 \varphi_{t} = - \Omega \wedge  \eta + \cos t \,\Psi_{+} + \sin t \, \Psi_{-},
$$
with $\eta$ the contact form of the Sasakian structure on $S$. The change that we made of the first term on the right-hand side of  \eqref{def:G2form-sasaki-eintein} is due to the following. 
By \eqref{def:almostcontact-st}, we have 
$d\eta(U,V)=2g(\Phi U,V)$, while in \cite{AS} the authors consider $d\eta(U,V) = 2 g(U, \Phi V)$, for all vector fields $U, V$ on $S$.
\end{remark}

Let us recall that a {\em $3$-Sasakian structure} is a collection of three Sasakian structures $(\phi_i, \xi_i, \eta_i, g)$ $(i\in\{1, 2, 3\})$ on a
$(4n+3)$-dimensional Riemannian manifold satisfying
quaternionic-like identities. More
precisely, a Riemannian manifold $(S, g)$ of dimension $4n+3$ is
called $3$-Sasakian if its cone $(S\times{\mathbb{R}}^+, g^c = r^2 g\,+\,dr^2)$ is hyperk\"ahler, or 
equivalently, the holonomy group of the cone metric $g^c$ is a
subgroup of $\mathrm{Sp}(n+1)$. In this case, the Reeb vector
fields $\xi_i=J_i\partial_t$ $(i=1,2,3)$ 
satisfy the relations: $g(\xi_i, \xi_j) = \delta_{ij}$ and
  $[\xi_i,\xi_j]=2\xi_k$ for $(i,j,k)$ a cyclic permutation of $(1,2,3)$.

If $(S, g)$ is a $7$-dimensional $3$-Sasakian manifold, then $S$ carries a second nearly parallel $\G_2$-structure whose 
underlying Einstein metric is such that its cone metric has holonomy equal to
$\Spin(7)$ (see \cite{FKMS, GS, NS}).
This second Einstein metric $\widetilde{g}$ on $S$ is given by 
\begin{equation} \label{second Einstein}
\widetilde{g}\, =\, \frac{1}{\sqrt{5}}\, g|_{\mathcal H} + g|_{\mathcal V}, 
\end{equation}
where ${\mathcal V}$ is the $3$-dimensional distribution ${\mathcal V} =\rm{span} \{\xi_1, \xi_2, \xi_3\}$, and ${\mathcal H} = {\mathcal V}^{\bot}$.
The explicit expression of the second nearly parallel $\G_2$-structure inducing the metric $\widetilde{g}$  is given in 
\cite[Prop.~2.4]{GS} (see also \cite{NS}).

\section{Minimal models and formality} \label{sect:formality}

In order to analyze the property of formality of nearly parallel $\G_2$-manifolds (Section \ref{sect:formalG2} and Section \ref{sec:extra})
and of $8$-manifolds with a locally conformal parallel $\Spin(7)$-structure (see Section \ref{sect:Spin(7)}), we review
here concepts about minimal models and formality (see \cite{DGMS, FHT, FM} for more details).

A {\it differential graded algebra} (or DGA) over the real numbers $\RE$, is a pair $(\cA,d)$ consisting
of a graded commutative algebra $\cA=\oplus_{k\geq 0} \cA^k$ over $\RE$ (that is, it is positively graded), and
a differential $d$ of degree $1$, satisfying the Leibnitz rule 
$d(a\cdot b) = (da)\cdot b +(-1)^{|a|} a\cdot (db)$, where
$|a|$ is the degree of $a$. 

The cohomology  $H^*({\cA})$ of a 
differential graded algebra $(\cA,\,d)$ is naturally a DGA with the 
product inherited from that on ${\cA}$ and with the differential
being identically zero. The DGA $({\cA},\,d)$ is {\it connected} if 
$H^0({\cA})\,=\,\RE$, and ${\cA}$ is {\em $1$-connected\/} if, in 
addition, $H^1({\cA})\,=\,0$. Henceforth we shall assume that all our DGAs are connected.
In our context, the main example of DGA is the de Rham complex $(\Omega^*(M),\,d)$
of a connected differentiable manifold $M$, where $d$ is the exterior differential.

Morphisms between DGAs are required to preserve the degree and to commute 
with the differential. A morphism $f:(\cA,d)\to (\cB,d)$ is a {\it quasi-isomorphism} if
the map induced in cohomology $f^*:H^*(\cA,d)\to H^*(\cB,d)$ is an isomorphism.

A DGA $(\mathcal{M},\,d)$ is said to be {\it minimal\/} if
\begin{enumerate}
 \item $\mathcal{M}$ is free as an algebra, that is, $\mathcal{M}$ is the free
 algebra $\bigwedge V$ over a graded vector space $V\,=\,\bigoplus_{i \geq 1} V^i$, and
 \item there is a collection of generators $\{x_\tau\}_{\tau\in I}$
indexed by some well ordered set $I$, such that
 $|x_\mu|\,\leq\, |x_\tau|$ if $\mu \,< \,\tau$, and each $dx_\tau$ is expressed in terms of preceding $x_\mu$, $\mu\,<\,\tau$.
 This in particular implies that $dx_\tau$ is always decomposable, i.e. $dx_\tau$ has no linear part.
 \end{enumerate}
We say that $(\bigwedge V,\,d)$ is a {\it minimal model} of the
differential graded commutative algebra $({\cA},\,d)$ if $(\bigwedge V,\,d)$ is minimal and there
exists a quasi-isomorphism $f\colon
{(\bigwedge V,\,d)}\longrightarrow {({\cA},\,d)}$.
In~\cite{Halperin}, it is proved that any connected DGA has a minimal model unique up to isomorphism. For $1$-connected
DGAs, a similar result was proved earlier in~\cite{DGMS}. 

A {\it minimal model\/} of a connected differentiable manifold $M$
is a minimal model $(\bigwedge V,\,d)$ for 
$(\Omega^*(M),\,d)$. If $M$ is  simply
connected, then the dual of the real homotopy vector
space $\pi_i(M)\otimes \RE$ is isomorphic to $V^i$, for any $i$ (see~\cite{DGMS}).

We say that a DGA $(\mathcal{A},\,d)$ is a {\it model} of a manifold $M$
if $(\mathcal{A},\,d)$ and $M$ have the same minimal model. Thus, if $(\bigwedge V,\,d)$ is the minimal
model of $M$, we have 
$$
 (\mathcal{A},\,d)\, \stackrel{g}\longleftarrow\, {(\bigwedge V,\, d)}\, \stackrel{f}\longrightarrow\, (\Omega^{*}(M),\,d),
$$
where $f$ and $g$ are quasi-isomorphisms.

A minimal algebra $(\bigwedge V,\,d)$ is called {\it formal} if there exists a
morphism of differential algebras $f \colon {(\bigwedge V,\,d)}\,\longrightarrow\,
(H^*(\bigwedge V),0)$ inducing the identity map on cohomology.
Also a differentiable manifold $M$ is called {\it formal} if its minimal model is
formal. 

Simply connected compact manifolds of dimension $\leq 6$ are always formal
\cite{N-Miller}, so dimension $7$ is the lowest dimension in which formality is an issue.

The formality of a minimal algebra $(\bigwedge V,\,d)$  is characterized  by the condition that $V$
can be decomposed into a direct sum $V\,=\, C\oplus N$ with $d(C) \,=\, 0$ with $d$ injective on $N$ and  such that every closed element in the ideal
$I(N)$ in $\bigwedge V$ generated by $N$ is exact \cite{DGMS}.
This characterization can be weakened using the concept of
$s$-formality introduced in \cite[Definition 2.1]{FM} as follows.

\begin{definition}\label{def:primera}
A minimal algebra $(\bigwedge V,\,d)$ is $s$-{\em formal} ($s> 0$) if for each $i\leq s$
the space $V^i$ of generators of degree $i$ decomposes as a direct
sum $V^i=C^i\oplus N^i$, where the spaces $C^i$ and $N^i$ satisfy
the three following conditions:
\begin{enumerate}
\item $d(C^i) = 0$,
\item the differential map $d\colon N^i\longrightarrow \bigwedge V$ is
injective, and
\item any closed element in the ideal
$I_s=I(\bigoplus\limits_{i\leq s} N^i)$, generated by the space
$\bigoplus\limits_{i\leq s} N^i$ in the free algebra $\bigwedge
(\bigoplus\limits_{i\leq s} V^i)$, is exact in $\bigwedge V$.
\end{enumerate}
\end{definition}

A differentiable manifold $M$ is $s$-{\em formal} if its minimal model
is $s$-formal. Clearly, if $M$ is formal then $M$ is $s$-formal, for any $s>0$.
The main result of \cite{FM} shows that  
sometimes the weaker
condition of $s$-formality implies formality.

\begin{theorem}\cite[Theorem 3.1]{FM} \label{fm2:criterio2}
Let $M$ be a connected and orientable compact differentiable
manifold of dimension $2n$ or $(2n-1)$. Then $M$ is formal if and
only if it is $(n-1)$-formal.
\end{theorem}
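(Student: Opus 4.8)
The forward implication is immediate: if $M$ is formal then its minimal model $(\bigwedge V,d)$ admits a decomposition $V=C\oplus N$ satisfying the three conditions of the formality characterization in every degree, and restricting attention to degrees $\le s$ shows that $M$ is $s$-formal for every $s$. So the content is the converse, and I would assume $M$ is $(n-1)$-formal with $\dim M=m\in\{2n-1,2n\}$ and fix a minimal model $(\bigwedge V,d)$. The first observation is that any decomposition realizing conditions (1)--(2) is forced to be $C^i=V^i\cap\ker d$ with $N^i$ an arbitrary complement; I would take this canonical splitting in \emph{all} degrees, noting that for $i\le n-1$ it agrees with the one provided by $(n-1)$-formality. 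Conditions (1) and (2) then hold automatically in every degree, so the whole problem reduces to upgrading condition (3) from the ideal $I_{n-1}=I(\bigoplus_{i\le n-1}N^i)$, where it is known, to the full ideal $I(N)=I(\bigoplus_{i\ge1}N^i)$.

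The combinatorial engine is a degree count. Any monomial in $\bigwedge V$ of total degree $\le m$ can contain at most one generator of degree $\ge n$, since two such factors already contribute degree $\ge 2n\ge m$; in the odd case $m=2n-1$ this is strict and rules the situation out entirely, while in the even case $m=2n$ it can only occur for a product of exactly two generators of degree precisely $n$. Consequently, a homogeneous element of $I(N)$ of degree $k\le n-1$ automatically lies in $I_{n-1}$ (no generator of degree $\ge n$ can appear), so every closed such element is exact by hypothesis. This disposes of all low degrees, and the remaining task is to treat closed elements $a\in I(N)$ of degree $k$ with $n\le k\le m$.

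For these I would invoke Poincaré duality, which is exactly where compactness and orientability enter. A closed $a$ of degree $k$ is exact if and only if $\int_M a\wedge c=0$ for every closed $c\in\bigwedge V$ of complementary degree $m-k$, since the quasi-isomorphism $\bigwedge V\to\Omega^*(M)$ realizes every class and the duality pairing is nondegenerate. The point is that $m-k\le m-n\le n$, so the dual factor $c$ lives essentially in the range of degrees controlled by $(n-1)$-formality, while each monomial of $a$ that is not already in $I_{n-1}$ splits as a factor from $N^{\ge n}$ wedged with a cofactor of small complementary degree. Pairing $a$ against $c$ and using that $a\wedge c$ is a closed top-degree element of $I(N)$, I would rewrite the pairing as the evaluation of a closed element of $I_{n-1}$ assembled from the low-degree pieces, which is exact by the inductive hypothesis and hence integrates to zero. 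Running this as an induction on $k$ shows every closed element of $I(N)$ is exact, which is precisely condition (3) and therefore establishes formality.

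The delicate step, and the main obstacle, is the borderline even-dimensional middle degree $m=2n$, $k=n$: here $(n-1)$-formality gives no direct information, and a closed element of $I(N)$ can genuinely be a combination of degree-$n$ generators in $N^n$ together with products of two such generators landing in degree $2n$. Controlling these requires combining the injectivity of $d$ on $N^n$ with the nondegeneracy of the middle intersection form, so as to show that no new non-exact closed class can be produced by products straddling degree $n$. Once this middle-degree interaction is handled, the passage from $I_{n-1}$ to $I(N)$ in all other degrees is essentially bookkeeping with the multiplicative structure of $\bigwedge V$ and the degree bound above.
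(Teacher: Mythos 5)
The paper does not prove this statement; it is quoted from the reference \cite{FM} (Theorem~3.1 there), so your proposal can only be measured against the argument in that paper. The parts of your sketch that are correct --- the triviality of the forward implication, the observation that conditions (1)--(2) force $C^i=V^i\cap\ker d$, the degree count showing that a monomial of degree $\le\dim M$ contains at most one generator of degree $\ge n$ (except for products of two degree-$n$ generators when $\dim M=2n$), hence that closed elements of $I(N)$ of degree $\le n-1$ lie in $I_{n-1}$, and the decision to invoke Poincar\'e duality in degrees $\ge n$ --- do reflect the opening moves of the published proof.

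The gap is the sentence in which you ``rewrite the pairing as the evaluation of a closed element of $I_{n-1}$ assembled from the low-degree pieces, which is exact by the inductive hypothesis.'' The Poincar\'e duality step correctly reduces the whole theorem to a single assertion: every closed element of $I(N)$ of top degree $m=\dim M$ integrates to zero. But such an element $a\wedge c$ is in general \emph{not} in $I_{n-1}$, nor cohomologous to an element of $I_{n-1}$ for free: its monomials can contain one generator of $V^{\ge n}$ (or two of degree exactly $n$ when $m=2n$), and these are precisely the terms the theorem must control. No inductive hypothesis reaches them --- your induction is on the degree $k$ of $a$ with $n\le k\le m$, yet for every such $k$ the pairing lands in degree $m$, so even the base case already presupposes the top-degree conclusion. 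One also cannot simply discard the monomials involving $V^{\ge n}$ and integrate the remainder, because the decomposition of $a\wedge c$ by its $V^{\ge n}$-content is not preserved by $d$ (the differential of a generator of degree $\ge n$ is a decomposable expression in lower-degree generators), so the ``low-degree piece'' need not be closed. Your final paragraph correctly identifies the middle-degree interaction ($m=2n$, products of two generators straddling degree $n$) as the crux, but ``combining the injectivity of $d$ on $N^n$ with the nondegeneracy of the intersection form'' is a statement of intent rather than an argument. This unaddressed step --- analysing the closedness equation to show that, modulo exact terms, the components involving $V^{\ge n}$ contribute nothing to the top-degree class --- is where the technical lemmas of \cite{FM} do their work, and without it the proposal proves only the easy cases.
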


We will use also the following property, whose proof is exactly the same as the one given 
in \cite[Lemma 2.4]{FIM} for $7$-dimensional simply connected compact manifolds with $b_2\,\leq 1$.
\begin{lemma} \label{lem:$3$-formal}
Let $M$ be a $7$-dimensional compact manifold with $b_1(M)\,=0$ and $b_2(M)\,\leq 1$.
Then, M is $3$-formal and so formal.
\end{lemma}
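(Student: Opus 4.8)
The plan is to prove $3$-formality directly and then invoke Theorem~\ref{fm2:criterio2}: since $\dim M = 7 = 2\cdot 4-1$, formality is equivalent to $(4-1)$-formality, i.e.\ to $3$-formality. Let $(\bigwedge V,d)$ be the minimal model of $M$. According to Definition~\ref{def:primera} it suffices to exhibit splittings $V^i=C^i\oplus N^i$ for $i=1,2,3$ satisfying the three listed conditions. The argument is the verbatim analogue of \cite[Lemma~2.4]{FIM}, the only modification being that I would use $b_1(M)=0$ in place of simple connectivity.

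First I would pin down the low-degree generators. Because $d$ is decomposable, the first degree-$1$ generator in the well-ordering must be closed, so $H^1(\bigwedge V)\neq 0$ unless there are no degree-$1$ generators; hence $b_1(M)=0$ forces $V^1=0$ (equivalently $M$ is rationally $1$-connected). Then each degree-$2$ generator has differential lying in $\bigwedge^{\geq 2}(V^1)=0$, so $d|_{V^2}=0$ and $V^2\cong H^2(M)$, giving $\dim V^2=b_2(M)\leq 1$. Setting $C^1=N^1=0$, $C^2=V^2$, $N^2=0$ disposes of degrees $1$ and $2$.

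For degree $3$, the differential maps $V^3$ into the decomposables of degree $4$, which (as $V^1=0$) lie in $\bigwedge^2 V^2$. If $b_2=0$ this space is $0$, so $d|_{V^3}=0$, one takes $N^3=0$, and condition~(3) is vacuous. If $b_2=1$, write $V^2=\langle x\rangle$; then $\bigwedge^2 V^2=\langle x^2\rangle$ is one-dimensional, and the injectivity of $d$ on $N^3$ forces $\dim N^3\leq 1$. Setting $C^3=\ker(d|_{V^3})$, either $N^3=0$ (handled as before) or $N^3=\langle y\rangle$ with $dy=x^2$ after rescaling. The remaining task, and the only genuine point of the proof, is to verify condition~(3) in this last case: every closed element of the ideal $I_3=I(y)$ formed inside $\bigwedge(V^2\oplus V^3)$ must be exact in $\bigwedge V$.

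The crux is to describe $I(y)$ and compute its closed elements. Writing $B=\bigwedge(x,C^3)$ for the subalgebra generated by $x$ together with a basis of $C^3$, all these generators are closed, so $d|_B=0$; and since $y^2=0$ one has $I(y)=yB$. For $b\in B$ the Leibniz rule gives $d(yb)=(dy)\,b-y\,(db)=x^2 b$. As $x$ is an even generator, the monomials $x^k c_{i_1}\cdots c_{i_r}$ form a basis of $B$ and multiplication by $x^2$ is injective, so $x^2b=0$ forces $b=0$. Thus the only closed element of $I_3$ is $0$, which is trivially exact, and condition~(3) holds. This establishes $3$-formality, and Theorem~\ref{fm2:criterio2} then yields that $M$ is formal. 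I expect no serious obstacle beyond this explicit ideal computation, since the whole difficulty is concentrated in the single generator $y$ coming from the at most one-dimensional $\bigwedge^2 V^2$.
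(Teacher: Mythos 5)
Your argument is correct and is essentially the proof the paper has in mind: the paper simply defers to \cite[Lemma 2.4]{FIM}, whose proof runs exactly as you describe ($b_1=0$ gives $V^1=0$, hence $V^2=C^2$ of dimension $\leq 1$, and in degree $3$ the ideal generated by the at most one-dimensional $N^3$ contains no nonzero closed element because multiplication by $x^2$ is injective on $\bigwedge(x,C^3)$). The only adaptation needed --- replacing simple connectivity by $b_1(M)=0$ to conclude $V^1=0$ --- is precisely the one you make, so there is nothing to add.
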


In order to detect  non-formality, instead of computing the minimal
model, which is usually a lengthy process, one can use  
(triple) Massey products, which are obstructions to formality \cite{DGMS}
and they are defined in the following way.
Let $(\mathcal{A},\,d)$ be a DGA  and suppose that there are
 $[a_i]\,\in\, H^{p_i}(\mathcal{A})$, $p_i\,>\,0$,
$1\,\leq\, i\,\leq\, 3$, such that $a_1\cdot a_2$ and $a_2\cdot a_3$ are
exact. Write $a_1\cdot a_2=da_{1,2}$ and $a_2\cdot a_3=da_{2,3}$.
The {\it (triple) Massey product} of the classes $[a_i]$ is defined as
$$
\langle [a_1],[a_2],[a_3] \rangle \,=\, 
[ a_1 \cdot a_{2,3}+(-1)^{p_{1}+1} a_{1,2}
\cdot a_3] \in \frac{H^{p_{1}+p_{2}+ p_{3} -1}(\mathcal{A})}{[a_1]\cdot H^{p_{2}+ p_{3} -1}(\mathcal{A})+[a_3]\cdot H^{p_{1}+ p_{2} -1}(\mathcal{A})}.
$$

Note that a Massey product $\langle [a_1],[a_2],[a_3] \rangle$ on $(\mathcal{A},\,d_{\mathcal{A}})$
is zero (or trivial) if and only if there exist $\widetilde{x}, \widetilde{y}\in \mathcal{A}$ such that
$a_1\cdot a_2=d_{\mathcal{A}}\widetilde{x}$, \, $a_2\cdot a_3=d_{\mathcal{A}}\widetilde{y}$\, and
$$0\,=\,[ a_1 \cdot \widetilde{y}+(-1)^{p_{1}+1}\widetilde{x}\cdot a_3]
\,\in\, H^{p_{1}+p_{2}+ p_{3} -1}(\mathcal{A})\, .$$

\subsection*{Models of fibrations}\label{subsect:fibrations}
Let $F\to E \to B$ be a fibration of simply connected spaces. Let 
$({\mathcal{A}}_B, d_B)$ be a model  (not necessarily minimal) of
the base $B$, and let $(\bigwedge V_F, d_F)$ be a minimal model of the fiber $F$. By \cite[section 15]{FHT}, a model
of $E$ is the \emph{KS-extension} $({\mathcal{A}}_B\otimes \bigwedge V_F, D)$,
where $D$ is defined as
$Db=d_{B}b$, for $b\in {\mathcal{A}}_B$, and  $Dx= d_{F}x+ \Theta(x)$, for $x\in V_F$, and where 
$$
\Theta:V_F\to {\mathcal{A}}_B \oplus \big( \bigwedge\nolimits^+ V_F \otimes {\mathcal{A}}_B^+ \big).
$$
Note that $\Theta(x)$ is written in terms of the previous generators of $\bigwedge V_F$, by minimality.
The component of $\Theta$ in the first summand $\tau: V_F \to {\mathcal{A}}_B$ 
is the {\em transgression map}.
This is also true in the case that $F, E$ and $B$ are nilpotent
spaces and the fibration is nilpotent, that is $\pi_1(B)$ acts nilpotently in the homotopy groups $\pi_j(F)$ of the fiber. 

We will need two cases.
In the case that $E$ and $B$ are simply connected and $F=\SU(2)=S^3$ or 
$F=S^3/{\mathbb{Z}}_r$, with $r>0$, the fibration is nilpotent. 
The minimal model of $S^3$ is $(\bigwedge a,d)$, with $|a|=3$ and $da=0$. 
Both spaces $S^3$ and $S^3/{\mathbb{Z}}_r$ are rationally homotopy equivalent, because the quotient map 
$S^3 \to S^3/{\mathbb{Z}}_r$ is an isomorphism on cohomology and the fundamental group of 
$S^3/{\mathbb{Z}}_r$ is trivial after rationalization.
Therefore a fibration
$S^3/{\mathbb{Z}}_r\,\rightarrow \,E\,\to B$ is a rational $S^3$-fibration (that is, after rationalization of the spaces, it becomes
a fibration). The map $\Theta$ is such that $\Theta(a) \in{\mathcal{A}}_B^4$ is a closed 
element of degree $4$ defining the (rational) Euler class  $e(E)$ of the fibration.

The second case that we will need is for fibrations with $F=S^1\times S^2$.
Then the minimal model of $F$ is $(\bigwedge(b,c,x), d)$, where $|b|=1$, $|c|=2$, $|x|=3$, and 
the differential map is defined by $db=0$, $dc=0$ and $dx=c^2$.
If $B$ is simply connected then the fibration is nilpotent, and a model of $E$ is given as
$({\mathcal{A}}_B\otimes \bigwedge (b,c, x), D)$, where the differential $D$ is of the form
  $$
  Db=\tau(b), \,\,  Dc=b\, a_2'+\tau(c), \,\,  Dx=c^2+ c\, a_2+b\,a_3+\tau(x),
  $$
  where $\tau$ is the transgression map, and $a_2,a_2'\in {\mathcal{A}}_B^2$,
$a_3\in {\mathcal{A}}_B^3$. Here $\tau(b)$  is a closed element of degree $2$ defining the Chern class
of the $S^1$-bundle induced from $E$ via the fiberwise projection $S^1 \times S^2\to S^1$.

\section{On the formality of nearly parallel $\G_2$-manifolds} \label{sect:formalG2}

Let $P$ be a simply connected compact $7$-manifold with a nearly parallel $\G_2$-structure $\varphi$ inducing a metric $g$ such that 
$(P, g)$ is not isometric to the standard sphere $S^7$.
As already mentioned in the introduction, $(P, g)$ belongs to one of the following three classes:  
a {\em  proper} nearly parallel $\G_2$-manifold, a Sasaki--Einstein manifold  or a $3$-Sasakian manifold.

Formality of simply connected compact  $3$-Sasakian manifolds, of dimension $7$, was 
studied in \cite{FIM}, showing that   a simply connected compact $3$-Sasakian manifold $(S, g)$ of dimension $7$  is formal
if and only if its second Betti number $b_2(S)\leq 1$.

Regarding the formality of  Sasaki--Einstein manifolds, by  \cite[Theorem 3.2]{BFMT}, if  $\omega=\omega_1 +\omega_2 + \omega_3$ is  
the K\"ahler form on $S^2\times S^2\times S^2$, where $[\omega_1]$, $[\omega_2]$ and $[\omega_3]$ are the generators of the integral cohomology group of each of the 
$S^2$-factors on $S^2 \times S^2\times S^2$, then, 
the total space of the principal $S^1$-bundle
 $$
S^1 \,\hookrightarrow \,Q(1,1,1) \,\too\, S^2\times S^2\times S^2\, ,
 $$ 
with Euler class $[\omega]\in H^2(S^2\times S^2\times S^2,\mathbb{Z})$ is non-formal.

The above is a consequence of the following more general result.
\begin{proposition}\label{prop:nearly-non-formal} 
Consider the principal $S^1$-bundle
 $$
S^1 \,\hookrightarrow \, Q \,\too\, S^2\times S^2\times S^2\, ,
 $$ 
with Euler class $e_1a_1+e_2a_2+e_3a_3$, where $e_1,e_2,e_3\in \Z$, and $a_i$ is the generator
of $H^2(S^2,\Z)$ for the $i$-th copy of $S^2$, $i=1,2,3$. Then, $Q$ is formal if and only if $e_1e_2e_3= 0$.
\end{proposition}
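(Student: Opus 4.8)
The plan is to compute a model of $Q$ using the fibration machinery recalled in Section~\ref{subsect:fibrations}, and then to detect formality versus non-formality via a triple Massey product. The base $B = S^2 \times S^2 \times S^2$ has a minimal model that is just its cohomology: $(\bigwedge(a_1,a_2,a_3), d_B = 0)$ with $|a_i| = 2$ and relations $a_i^2 = 0$ (more precisely, the minimal model has generators $a_i$ in degree $2$ and generators $z_i$ in degree $3$ with $dz_i = a_i^2$, coming from the minimal model of each $S^2$). Since $Q \to B$ is a principal $S^1$-bundle, the fiber is $S^1$ with minimal model $(\bigwedge b, d=0)$, $|b|=1$. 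By the KS-extension construction, a model of $Q$ is $(\mathcal{A}_B \otimes \bigwedge b, D)$ where $Db = \tau(b)$ is the closed degree-$2$ element representing the Euler class, i.e. $\tau(b) = e_1 a_1 + e_2 a_2 + e_3 a_3$, and $D$ agrees with $d_B$ on the base generators.

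\emph{The easy direction} ($e_1e_2e_3 = 0 \Rightarrow$ formal). If one of the $e_i$ vanishes, say $e_3 = 0$, then the $S^1$-bundle is pulled back from $S^2 \times S^2$ (it is trivial in the third factor), so $Q$ is diffeomorphic to $Q' \times S^2$, where $Q'$ is the corresponding circle bundle over $S^2 \times S^2$. I would argue that $Q'$ is formal — for instance it is a simply connected compact $5$-manifold, hence formal by the Neisendorfer--Miller result \cite{N-Miller} cited in the excerpt — and then use that a product of formal spaces is formal, so $Q = Q' \times S^2$ is formal. (If two or all three $e_i$ vanish the argument is the same or easier, splitting off more $S^2$ factors.)

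\emph{The hard direction} ($e_1e_2e_3 \neq 0 \Rightarrow$ non-formal). Here I would exhibit a nontrivial triple Massey product, following the prototype of \cite[Theorem 3.2]{BFMT}. In the model $(\mathcal{A}_B \otimes \bigwedge b, D)$, the classes $[a_i]$ are degree-$2$ cohomology classes, and the Euler relation $D b = e_1 a_1 + e_2 a_2 + e_3 a_3$ makes products like $a_i a_j$ become exact in a controlled way. Concretely, when all $e_i \neq 0$ one can choose two cohomology classes $[\alpha], [\beta]$ built from the $a_i$ so that $\alpha\beta$ is exact via $b$, and then compute $\langle [\alpha],[\beta],[\gamma]\rangle$ for a suitable third class; the obstruction lives in a degree-$5$ quotient cohomology group and the condition $e_1 e_2 e_3 \neq 0$ is exactly what guarantees the representative is nonzero modulo the indeterminacy $[\alpha]\cdot H^3 + [\gamma]\cdot H^3$.

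\emph{The main obstacle} is the bookkeeping of the non-formal direction: I must first compute $H^*(Q)$ explicitly from the model — determining which products of the $a_i$ survive and which become exact after adjoining $b$ with $Db = \sum e_i a_i$ — so that the indeterminacy subgroup in the Massey product is known, and then verify that the chosen triple product is genuinely nonzero in the quotient precisely when $e_1e_2e_3 \neq 0$. The symmetry of the three factors should let me reduce to a single representative computation; the delicate point will be choosing the three classes so that both pairwise products are exact and the resulting class avoids the indeterminacy, which is where the hypothesis $e_1 e_2 e_3 \neq 0$ must enter in an essential way.
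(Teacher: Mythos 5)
Your overall strategy coincides with the paper's: build the KS-extension model of $Q$ over the minimal model of $S^2\times S^2\times S^2$ with $Db=e_1a_1+e_2a_2+e_3a_3$, and detect non-formality by a triple Massey product. However, as written the proposal has one fixable error and one genuine gap.

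The fixable error is in the formality direction. Your geometric reduction $Q\cong Q'\times S^2$ (pulling the bundle back from $S^2\times S^2$ when $e_3=0$) is correct, but the claim that $Q'$ is simply connected is false in general: from the homotopy exact sequence, $\pi_1(Q')\cong\Z/\gcd(e_1,e_2)$, which is nontrivial whenever $\gcd(e_1,e_2)>1$, so Neisendorfer--Miller does not apply as cited. The statement you need is still true, because $b_1(Q')=0$ forces $V^1=0$ and $N^2=0$ in the minimal model, so $Q'$ is $2$-formal and hence formal by Theorem~\ref{fm2:criterio2} (which requires only compactness and orientability). The paper avoids this issue entirely by staying algebraic: after the change of variable $\widetilde{x}_1=x_1-e_1^{-2}\bigl(y\cdot dy-2\sum_{i=2}^3 e_i y\cdot a_i+\sum_{i=2}^3 e_i^2 x_i\bigr)$ one gets $d\widetilde{x}_1=2e_1^{-2}e_2e_3\,a_2\cdot a_3$, and when $e_2e_3=0$ the resulting model $\bigl(\bigwedge(a_2,a_3,\widetilde{x}_1,x_2,x_3),d\bigr)$ is already minimal and equal to the minimal model of $S^2\times S^2\times S^3$.

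The genuine gap is that the non-formality direction is a plan, not a proof: you never name the three classes, never verify that both pairwise products are exact, and never check non-vanishing modulo the indeterminacy --- and this is precisely where all the content lies. The paper's computation, which you would need to reproduce, runs as follows. With the reduced model above and $e_2e_3\neq 0$, one has $a_2\cdot a_2=dx_2$ and $a_2\cdot a_3=\frac{e_1^2}{2e_2e_3}\,d\widetilde{x}_1$, so $\langle[a_2],[a_2],[a_3]\rangle$ is defined and represented by $\frac{e_1^2}{2e_2e_3}\,a_2\cdot\widetilde{x}_1-x_2\cdot a_3$; this degree-$5$ element is not exact in the model, and the indeterminacy vanishes because $H^3(Q)=0$ (no closed degree-$3$ elements exist in $\bigwedge(a_2,a_3,\widetilde{x}_1,x_2,x_3)$, since $a_2a_3$, $a_2^2$, $a_3^2$ are linearly independent). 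Note in particular that the hypothesis $e_1e_2e_3\neq 0$ enters not through a clever choice of classes but through the exactness of $a_2\cdot a_3$ itself, which fails when $e_2e_3=0$; your sketch does not identify this mechanism, and without it the argument cannot be completed.
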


\begin{proof}
We will determine a model of the $7$-manifold $Q$. A minimal model of 
$S^2 \times S^2 \times S^2$ is the differential algebra $(\bigwedge(a_1, a_2, a_3, x_1, x_2, x_3),\,d)$, where $|a_i|=2$ 
while $|x_i|=3$ with $1\leq i \leq 3$, and the differential $d$ is defined by $da_i\,=\,0$ and $dx_i\,=\,a_i^2$.
Therefore, a model of the total space of a fiber bundle
 $$
 S^1 \,\hookrightarrow \,Q \,\longrightarrow\, S^2\times S^2\times S^2
 $$ 
is the differential algebra over the vector space $V$ generated by the elements 
$y$ of degree $1$, $a_1, a_2, a_3$ of degree $2$, and  $x_1, x_2, x_3$ of degree $3$, and 
the differential $d$ is given by
 $$
 da_i\,=\,0\, , \quad dx_i\,=\,a_{i}^2\,, \quad 1\leq i\leq 3, \quad dy\,=\,e_{1} a_{1}+e_2 a_2+e_3 a_3\, ,
 $$
where $e_{1} a_{1}+e_2 a_2+e_3 a_3\,\in\, H^2(S^2\times S^2\times S^2,\,\mathbb Z)$ is the 
Euler class of the $S^1$-bundle.

If all $e_1=e_2=e_3=0$, then $(\bigwedge V, d)$ is a minimal DGA, and so it is the minimal model of $Q$. But $(\bigwedge V, d)$
is also the minimal model of $S^1\times S^2\times S^2\times S^2$, which is formal being the product of formal manifolds. 
Thus $Q$ is formal because its minimal model is so.

If not all $e_i$ are zero, we can assume $e_1 \neq 0$ (up to reordering). Then 
$e_1a_1= dy - e_2  a_2-  e_3 a_3$, so 
 \begin{align*}
 e_1^2dx_1 &=e_1^2a_1^2= (dy - e_2a_2- e_3a_3)^2 \\
 &=d(y\cdot dy)-2 \sum_{i=2}^3 e_i d(y\cdot a_i) + \sum_{i=2}^3 e_i^2 dx_i + 2 e_2e_3a_2\cdot a_3\,. 
\end{align*}
Thus, letting 
 $$
\widetilde{x}{}_1=x_1-  e_1^{-2}\left(y \cdot dy -2 \sum_{i=2}^3 e_i y\cdot a_i + \sum_{i=2}^3 e_i^2 x_i\right),
 $$
we have
 $$ 
d\widetilde{x}{}_1=  2 e_1^{-2}e_2 e_3 a_2\cdot a_3\,.
$$
 Then, the differential algebra $\big(\bigwedge(a_2, a_3,\widetilde{x}{}_1, x_2, x_3),\,d\big)$ is a model of $Q$. In fact, the map 
$f \colon (\bigwedge(a_2, a_3,\widetilde{x}{}_1, x_2, x_3),\,d) \to (\bigwedge(a_1, a_2, a_3, x_1, x_2, x_3, y),\,d)$ 
defined by
$f(a_i)=a_i$, $f(x_i)=x_i$ $(i= 2,3)$, and $f(\widetilde{x}{}_1)=x_1- e_1^{-2} \left(y \cdot dy -2 \sum_{i=2}^3 e_i y\cdot a_i + \sum_{i=2}^3 e_i^2 x_i \right)$ 
is a quasi-isomorphism. 

Let us assume that $e_2e_3=0$. In this case, $d\widetilde{x}{}_1=0$, and $(\bigwedge(a_2, a_3, x_2, x_3,\widetilde{x}{}_1),\,d)$ is a minimal differential graded algebra.
So $(\bigwedge(a_2, a_3, x_2, x_3,\widetilde{x}{}_1),\,d)$
is the minimal model of $Q$. Hence $Q$ is formal since  $(\bigwedge(a_2, a_3, x_2, x_3,\widetilde{x}{}_1),\,d)$ is also
the minimal model of $S^2\times S^2\times S^3$, which is formal being the product of formal manifolds.

Finally, if $e_2e_3\neq 0$, then 
$$
 a_2\cdot a_3= \frac{e_1^2}{2 e_2 e_3} d\widetilde{x}{}_1.
$$ 
We are going to show that $Q$ is non-formal because 
there exists a non-zero Massey product on $Q$. By \cite{BBFMT} we know that Massey products on a 
manifold can be computed by using any model for the manifold. Since $(\bigwedge(a_2, a_3, x_2, x_3,\widetilde{x}{}_1),\,d)$
is a model of $Q$, we have $H^*(Q) \cong H^*\big(\bigwedge(a_2, a_3, x_2, x_3,\widetilde{x}{}_1),\,d\big)$, so that
$H^1(Q)=0=H^6(Q)$, $H^2(Q)\cong \la [a_2],[a_3] \ra$, $H^3(Q)=0$, and by Poincar\'e duality for the $7$-manifold $Q$, $H^4(Q)=0$ 
and $H^5(Q)$ has dimension $2$. Moreover, the Massey product $\la [a_2],[a_2],[a_3]\ra$ is defined and
  $$
  \la [a_2],[a_2],[a_3]\ra =  \left[{\frac{e_1^2}{2e_2e_3}} a_2 \cdot\widetilde{x}{}_1 - x_2\cdot a_3\right].
  $$
This element in $H^5(Q)$ cannot be exact since there is no non-zero element 
$x\in \bigwedge^4 (a_2, a_3, x_2, x_3,\widetilde{x}{}_1)$
such that $dx = {\frac{e_1^2}{2e_2e_3}} a_2 \cdot\widetilde{x}{}_1 - x_2\cdot a_3$.
Moreover, the indeterminacy of the Massey product is zero because $H^3(Q)=0$. So 
$ \la [a_2],[a_2],[a_3]\ra \neq 0$, and hence $Q$ is non-formal.
\end{proof}

In order to exhibit further examples of non-formal simply connected Sasaki--Einstein manifolds, we consider a del Pezzo surface $P_k$, for $3\leq k\leq 8$,
that is the blow-up of the complex projective space ${\CP}{}^2$ at $k$ points,
 $$
P_k\,= \,{\CP}{}^2\#\, k \overline{\CP}{}^2=\,{\CP}{}^2\#\, \overline{\CP}{}^2\#\,  \overbrace{\cdots}^{k}
  \, \#\,\overline{\CP}{}^2, \quad 3\leq k\leq 8,
$$
where $\overline{\CP}{}^2$ is ${\CP}{}^2$ with the opposite of the standard orientation.
Then the de Rham cohomology of $P_k$ is
\begin{itemize}
 \item $H^0(P_k)=\langle 1\rangle$,
 \item $H^1(P_k)=0$,
 \item $H^2(P_k)=\langle a, a_1,\ldots, a_k \rangle$,
 \item $H^3(P_k)=0$,
 \item $H^4(P_k)=\langle \nu \rangle$,
\end{itemize}
where $\nu=a^2$ is the volume form, and $a$ is the integral cohomology class defined by the  K\"ahler form on $\CP{}^2$.
Among these cohomology classes, the following relations are satisfied
$$
a^2=-a_i^2=\nu, \,\,\, \text{for} \,\,\, 1\leq i \leq k, \quad a\cdot a_i= 0 =a_i\cdot a_j,
\,\,\, \text{for} \,\,\, 1\leq i, j \leq k \,\,\, \text{and} \,\,\, i\not=j.
$$
Now consider the 6-manifold
$
X_k =P_k\times S^2,$  $3\leq k\leq 8.
$

 \begin{theorem} \label{th:pezzo-nonformal-sasa-einst}
Let $S_k$ be the total space of the circle bundle  $S^1 \longrightarrow S_k \longrightarrow X_k =P_k\times S^2$, with Euler class 
$N(a-\sum_{i=1}^k \epsilon_i a_i + b)$, for 
$\epsilon_i>0$ such that $\sum \epsilon_i<1$, where $b$ is the generator of $H^2(S^2,\Z)$, and $N$ is a large
integer satisfying that $N\epsilon_i\in \Z$ for all $i$. 
Then, for $3\leq k \leq 8$ and some choice of Euler class as above, 
$S_k$ is a simply connected compact  Sasaki--Einstein manifold, with second Betti number $b_2 = k+1$, which is non-formal. 
\end{theorem}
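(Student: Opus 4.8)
The plan is to establish three things in order: that $S_k$ is Sasaki--Einstein, that it is simply connected with $b_2 = k+1$, and that it is non-formal. For the Sasaki--Einstein property I would invoke the converse direction of \cite[Theorem 2.8]{BG-2000} recalled in Section~\ref{nearly-parallel G2}: it suffices to exhibit on $X_k = P_k \times S^2$ a K\"ahler--Einstein metric of positive scalar curvature whose K\"ahler form represents an integral class proportional to the chosen Euler class. The del Pezzo surfaces $P_k$ for $3 \leq k \leq 8$ admit K\"ahler--Einstein metrics (by Tian--Yau and Tian's solution of the existence problem for del Pezzo surfaces, valid precisely when the points are in general position and $k \leq 8$), and $S^2 = \CP^1$ carries its Fubini--Study K\"ahler--Einstein metric; the product is then K\"ahler--Einstein with positive scalar curvature. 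The anticanonical class of $P_k$ is $c_1(P_k) = 3a - \sum_{i=1}^k a_i$ and that of $\CP^1$ is $2b$, so the K\"ahler--Einstein form on $X_k$ lies in a multiple of $3a - \sum a_i + (3/2)\cdot 2 b$; the point of choosing the Euler class $N(a - \sum \epsilon_i a_i + b)$ with $\epsilon_i > 0$, $\sum \epsilon_i < 1$ and $N \epsilon_i \in \Z$ is that for a suitable choice of the $\epsilon_i$ one matches a positive multiple of the K\"ahler--Einstein class, which is integral and admits a positive-definite representative. Here I would need to check that $a - \sum \epsilon_i a_i + b$ is indeed a positive (K\"ahler) class on $X_k$; this is where the constraints $\epsilon_i > 0$ and $\sum \epsilon_i < 1$ enter, guaranteeing positivity of the self-intersection and of the pairing against effective curves.

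\textbf{Topology of $S_k$.} Since $X_k$ is simply connected (both factors are) and the Euler class $N(a - \sum \epsilon_i a_i + b)$ is a \emph{primitive} integral class for an appropriate choice of $N$ and the $\epsilon_i$ (or at least indivisible after passing to the relevant quotient), the Gysin sequence of $S^1 \hookrightarrow S_k \to X_k$ gives $\pi_1(S_k) = 0$: indeed $H^1(S_k) = 0$ and the fundamental group is the cokernel of multiplication by the Euler class on $H^0$, which is trivial when the class is indivisible. For $b_2$ I would run the Gysin sequence
\begin{equation*}
H^0(X_k) \xrightarrow{\,\cup\, e\,} H^2(X_k) \to H^2(S_k) \to H^1(X_k) = 0,
\end{equation*}
where $e$ is the Euler class. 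Since $H^2(X_k) = \langle a, a_1, \dots, a_k, b\rangle$ has rank $k+2$ and the map $\cup\, e$ from $H^0$ has rank $1$ (the Euler class being nonzero), we get $b_2(S_k) = (k+2) - 1 = k+1$, as claimed.

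\textbf{Non-formality.} This is the heart of the argument and, as announced in the theorem's preamble, $S_k$ is claimed to carry a nonzero triple Massey product. I would proceed exactly as in Proposition~\ref{prop:nearly-non-formal}: build a model of $S_k$ as a KS-extension of the minimal model of $X_k$ by a degree-one generator $y$ with $dy = e = N(a - \sum_i \epsilon_i a_i + b)$. The minimal model of $X_k$ has degree-$2$ generators $a, a_1, \dots, a_k, b$ and degree-$3$ generators transgressing the cohomological relations $a^2 = -a_i^2 = a^2$, $a \cdot a_i = 0$, $a_i \cdot a_j = 0$ ($i \neq j$), together with $b^2 = 0$ (on $S^2$). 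Using $dy = e$ one solves for one of the degree-$2$ classes, say $b = N^{-1} dy - a + \sum_i \epsilon_i a_i$, and substitutes to produce a new closed-or-not generator $\widetilde{x}$ whose differential is a nonzero product of two surviving degree-$2$ classes; the relation $b^2 = 0$ combined with $a^2 = -a_i^2$ forces a cross-term that does \emph{not} vanish in cohomology. The plan is to then exhibit an explicit triple Massey product (analogous to $\langle [a_2],[a_2],[a_3]\rangle$ in Proposition~\ref{prop:nearly-non-formal}) built from two of the degree-$2$ generators whose pairwise products become exact after using $e$, and to verify it is nonzero with zero indeterminacy by computing $H^3(S_k)$ and $H^5(S_k)$ via Poincar\'e duality and checking that the representative is not a boundary.

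\textbf{The main obstacle} will be the non-formality computation: unlike the clean triple-product structure of $S^2 \times S^2 \times S^2$, here the cohomology ring of $X_k$ is richer (rank $k+2$ in degree two with the del Pezzo intersection form), so one must choose the three classes entering the Massey product carefully so that the requisite products are exact \emph{and} the resulting class is provably nontrivial in the quotient defining the Massey product. The delicate point is controlling the indeterminacy and confirming the output class is not exact in the full model $\bigwedge V$; I expect this to reduce, after the substitution eliminating one degree-$2$ generator, to a short linear-algebra verification in a small-dimensional piece of $H^5(S_k)$, mirroring the final paragraph of the proof of Proposition~\ref{prop:nearly-non-formal}, which I would adapt rather than repeat from scratch. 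I would also need to double-check that the choice of $\epsilon_i$ making the Euler class a positive multiple of the K\"ahler--Einstein class is simultaneously compatible with the integrality condition $N\epsilon_i \in \Z$ and with a nonvanishing Massey product; the phrase ``some choice of Euler class as above'' in the statement signals that not every admissible $(\epsilon_i, N)$ need work, and I would select one that does.
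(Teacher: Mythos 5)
Your overall architecture coincides with the paper's, but the decisive step --- exhibiting a non-zero triple Massey product --- is left as a plan, and this is exactly where the paper does its work. Two observations would unblock you. First, the paper does not pass through the minimal model of $X_k$: since $X_k$ is compact K\"ahler and hence formal, one may take $(H^*(X_k),0)$ as a model of the base, so that $\mathcal{A}=H^*(X_k)\otimes\bigwedge(y)$ with $dy=N(a-\sum_{i=1}^k\epsilon_ia_i+b)$ is a model of $S_k$; this eliminates all degree-$3$ generators and reduces everything to the intersection ring of $X_k$, whereas your KS-extension of the minimal model of $X_k$ would carry the transgressions of all the relations in $H^4(X_k)$ and make the computation much heavier. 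Second, no delicate choice of classes is needed: the paper takes $\langle[a],[a],[a_1]\rangle$. Here $a\cdot a_1=0$ already in $H^*(X_k)$, so one of the two required primitives is $0$, and $a^2$ becomes exact in $\mathcal{A}$ because of the identity $(a-\sum_{i=1}^k\epsilon_ia_i+b)\cdot(a-\sum_{i=1}^k\epsilon_ia_i-b)=(1-\sum_{i=1}^k\epsilon_i^2)\,a^2$, which uses $b^2=0$, $a\cdot a_i=a_i\cdot a_j=0$ and $a_i^2=-a^2$. The resulting representative $-(1-\sum\epsilon_i^2)^{-1}(\epsilon_1 a^2-b\cdot a_1)\cdot\widetilde{y}$, with $\widetilde{y}=\frac{1}{N}y$, is a closed, non-exact element of degree $5$, and the indeterminacy vanishes because $H^3(S_k)=0$. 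Until this specific triple and this identity (or an equivalent) are produced, the non-formality claim is not proved.

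There is also a concrete error in your Sasaki--Einstein discussion: the assertion that a suitable choice of $\epsilon_i$ makes $a-\sum_{i=1}^k\epsilon_ia_i+b$ a positive multiple of the K\"ahler--Einstein class cannot hold. That class is proportional to $c_1(X_k)=3a-\sum_{i=1}^k a_i+2b$, whose $a$- and $b$-components are in ratio $3:2$ while yours are in ratio $1:1$; moreover the matching value $\epsilon_i=1/3$ would give $\sum\epsilon_i=k/3\ge 1$ for $k\ge3$, violating the hypothesis. The paper does not attempt such a matching: the constraints $\epsilon_i>0$ and $\sum\epsilon_i<1$ serve only to place $a-\sum\epsilon_ia_i$ in the K\"ahler cone of $P_k$ (the dual of the effective cone), so that the Euler class is an integral K\"ahler class and the circle bundle $S_k$ exists; the Sasaki--Einstein property is then invoked separately from the existence of a K\"ahler--Einstein metric on $X_k$ (Tian--Yau together with \cite{FKMS}). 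You should keep these two roles of the Euler class apart rather than try to force the Euler class to coincide with the K\"ahler--Einstein class.
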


\begin{proof}
It is standard that $a-\sum_{i=1}^k \epsilon_i a_i$ is a K\"ahler class for $P_k= {\CP}{}^2\#\, k \overline{\CP}{}^2$, when 
$\epsilon_i>0$ are small enough, since $a$ is the K\"ahler class of ${\CP}{}^2$ and $a_i$ represent the classes of the
exceptional divisors of the blow-up. The effective cone of
$P_k$ is generated by $a-a_i$ and $a_i$, $i=1,\ldots,k$. Hence the K\"ahler cone, which is its dual, is the open convex hull of $a$ and
$a-a_i$, $i=1,\ldots, k$. So $a-\sum_{i=1}^k \epsilon_i a_i$ is a K\"ahler class for $\sum \epsilon_i<1$,
and $a-\sum_{i=1}^k \epsilon_i a_i+b$ is a K\"ahler class for $X_k=P_k\times S^2$.
Now we choose $N$ so that $N\epsilon_i\in \Z$, hence 
$N(a-\sum_{i=1}^k \epsilon_i a_i +  b)$ is an integral cohomology class defined by a K\"ahler form on the product complex
manifold $X_k =P_k\times S^2$. 
Therefore, there is a circle bundle $S_k \longrightarrow X_k =P_k\times S^2$ with Euler class equal to 
$N(a-\sum_{i=1}^k \epsilon_i a_i + b)$, where  $b$ is the generator of $H^2(S^2,\Z)$.

Clearly $S_k$ is a $7$-dimensional simply connected, compact
manifold, with second Betti number $b_2=k+1$. For some choice of K\"ahler form, $S_k$ is
Sasaki--Einstein. Indeed, Tian and Yau in \cite{TY}, proved that there are K\"ahler--Einstein structures with $c_1 >0$
on any manifold $P_k\,= \,{\CP}{}^2\#\, k \overline{\CP}{}^2$, for $3\leq k \leq 8$.
Then, there exists a K\"ahler--Einstein metric on any manifold $X_k =P_k\times S^2$, for  $3\leq k \leq 8$ \cite{FKMS}.
Thus, according to Section \ref{nearly-parallel G2},  $S_k$ is a Sasaki--Einstein manifold.

By \cite{DGMS}, if $S_k$ has a non-zero Massey product, then $S_k$ is non-formal.
By \cite{BBFMT}, we know that Massey products on a manifold can be computed using any model for
the manifold. Since $X_k$ is a compact K\"ahler manifold, $X_k$ is formal. Thus, a model of $X_k$ is
$(H^*(X_k), 0)$, where $H^*(X_k)$ is the de Rham cohomology algebra of $X_k$, that is
\begin{align*} 
 H^0(X_k)&=\langle 1\rangle, \nonumber\\
 H^1(X_k)&=\,H^3(X_k)\,=\,H^5(X_k)\,=\,0\,, \nonumber\\
  H^2(X_k)&= \langle a, a_1,\ldots, a_k, b \rangle\,, \nonumber \\
 H^4(X_k)&= \langle a^2, a\cdot b, a_1\cdot b, \ldots, a_k\cdot b \rangle, \nonumber \\
 H^6(X_k)&= \langle a^2\cdot b \rangle. \nonumber
\end{align*}
Then, a model of $S_k$ is the differential graded algebra
$(\mathcal{A}, d)$, where $\mathcal{A}\,=\,H^*(X_k)\otimes \bigwedge(y)$, with $|y|\,=\,1$, $d(H^*(X_k))=0$
and $dy=N(a-\sum_{i=1}^k \epsilon_i a_i + b)$. Write $\widetilde{y}=\frac1N y$. Then, 
 $H^1(\mathcal{A}, d)\,=\,H^3(\mathcal{A}, d)=0$, $H^2(\mathcal{A}, d)\,=\,\langle [a], [a_1],\ldots, [a_k]\rangle$.

Using this model, we compute the Massey product $\la [a], [a], [a_1]\ra$. In this model $a\cdot a_i=0$ $(1 \leq i \leq k)$, and 
 $$
 (a-\sum_{i=1}^k \epsilon_i a_i + b)\cdot (a-\sum_{i=1}^k \epsilon_ia_i-b) =(1- \sum_{i=1}^k \epsilon_i^2)\,a^2,
$$
since $b^2 =0$. Then, $a\cdot a= (1- \sum_{i=1}^k \epsilon_i^2)^{-1} d\big( (a-\sum_{i=1}^k \epsilon_ia_i-b)\cdot \widetilde{y}\big)$.
So the Massey product $\la [a], [a], [a_1]\ra$ is defined, and
 \begin{align*}
\la [a], [a], [a_1]\ra&= \left[- (1- \sum_{i=1}^k \epsilon_i^2)^{-1} \Big( (a-\sum_{i=1}^k \epsilon_i a_i-b)\cdot \widetilde{y}\Big)\cdot a_1\right] \\
&= \left[-(1- \sum_{i=1}^k \epsilon_i^2)^{-1} (\epsilon_1\nu-b\cdot a_1) \cdot \widetilde{y}\right],
\end{align*}
which
is non-zero in $H^5(S_k)$. Therefore, $S_k$ is non-formal. Note that there is no indeterminacy of this Massey product, since
it lives in $[a] \cdot H^3(S_k) + [a_1] \cdot H^3(S_k)$, and we know that $H^3(S_k)\cong H^3(\mathcal{A}, d)=0$.
\end{proof}

Note that in \cite{FIM} there is an example of a $7$-dimensional regular simply connected Sasaki--Einstein manifold,
with second Betti number $b_{2}\geq 2$, which is formal, and so it does not admit any
$3$-Sasakian structure by \cite{FIM}. Such a manifold 
 is the total space of an $S^1$-bundle over the blow-up of the complex projective space
${\CP}{}^3$ at four points. Other examples of simply connected formal Sasaki--Einstein manifolds,
of dimension $7$, are the total space of a circle bundle over the 
K\"ahler--Einstein manifold $\CP^2 \times S^2$ \cite{BBFMT}, and the space $W_{1,1}$  
which, as Sasaki--Einstein manifold, is the 
total space of a circle bundle over the flag manifold $F(1,2)$ \cite{FKMS}.

\section{Formality of proper nearly parallel $\G_2$-manifolds} \label{sec:extra}

According to \cite{FKMS}, the only examples of proper nearly parallel $\G_2$-manifolds, whose underlying metric is homogeneous, 
are the squashed $7$-sphere $S^7_{sq}$, the Berger 
space $\mathcal B =\SO(5)/\SO(3)$ and the Aloff--Wallach spaces $W_{k,l}=\SU(3)/ S_{k,l}^1$\,.
The only compact non-homogeneous examples of proper nearly parallel $\G_2$-manifolds, known in the literature,
 are $7$-dimensional compact non-homogeneous $3$-Sasakian manifolds $(S,g)$ with the canonical variation metric 
 $\widetilde g$ of $g$ given by \eqref{second Einstein}.
  (Examples of such $3$-Sasakian manifolds are given in \cite{BGMR}.)

In this section we determine the minimal models of appropriate fibre bundles
over $\CP^2$ and show that $\mathcal B$ and $W_{k,l}$ are both formal. In
particular, we show that the Aloff--Wallach spaces and $S^2 \times S^5$ have
the same minimal model. It was previously known that as $\mathcal B$ is a
rational homology sphere it is geometrically formal for any choice of
Riemannian metric, i.e.\ have the property that wedge products of harmonic
forms are harmonic \cite{K}. The homogeneous spaces $W_{k,l}$ are of Cartan
type and therefore are formal   by ~\cite{KT1}. Thus the result that
$\mathcal B$ and $W_{k,l}$ are formal is not new but the proofs are new.

\subsection{The Berger space} \label{sec:extra-1}

Consider the usual action of $\SO(3)$  on $\RE^3 = \text{span} \{x, y, z \}$. This action extends to an action of $\SO(3)$ on the polynomial ring $\RE [x,y,z]$. 
Let $V_n \subset \RE [x,y,z]$ be the $\SO(3)$-submodule of homogeneous polynomials of degree $n$, and let ${\mathcal H}_n \subset V_n$ denote the 
$\SO(3)$-submodule of harmonic polynomials of degree $n$, an irreducible $\SO(3)$-module of dimension $2n+1$. Every finite dimensional irreducible 
$\SO(3)$-module is isomorphic to ${\mathcal H}_n$, for some $n$. The irreducible representation ${\mathcal H}_2$ of $\SO(3)$ has dimension 5, and so 
defines a non-standard embedding $\SO(3) \subset \SO(5)$.  The \emph{Berger space} is the compact homogeneous space $\mathcal B = \SO(5)/\SO(3),
$
given by the quotient of $\SO(5)$ by the copy of $\SO(3)$ embedded in $\SO(5)$ via the irreducible representation ${\mathcal H}_2$ of $\SO(3)$.
The space $\mathcal B$ has a metric such that the holonomy of its cone metric is $\Spin(7)$  \cite{B}.
The proper nearly parallel $\G_2$-structure on $\mathcal B$ is given explicitly in \cite[Subsection 2.4.1]{BallM2}.

Berger \cite{Berger} proved that 
$\mathcal B$ is a rational homology sphere with $H^4(\mathcal B,\Z) = \Z_{10},$
and it has the cohomology ring of an $S^3$-bundle over~$S^4$. Therefore, $\mathcal B$ and the sphere $S^7$ have the same minimal model
(and in particular, it is formal). 
In the following theorem, we 
determine the minimal model of the total space of a principal $S^3$-bundle over~$S^4$, and we show that such a space is formal.
We apply this to conclude again that the space $\mathcal B$ and $S^7$ have the same minimal model, and hence $\mathcal B$ is formal.

\begin{theorem}\label{th:formal-Berger}
Consider an $S^3$-bundle $S^3 \,\rightarrow \,P \,\rightarrow\, S^4$.
Then $P$ is formal.  In particular, the Berger space $\mathcal B = \SO(5)/\SO(3)$ is formal.
\end{theorem}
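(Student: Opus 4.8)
**

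The plan is to compute a model of the total space $P$ of an $S^3$-bundle over $S^4$ using the KS-extension machinery recalled in Section~\ref{subsect:fibrations}, and then to read off the minimal model directly. First I would take the minimal model of the base $S^4$, namely $(\bigwedge(u,v),d)$ with $|u|=4$, $|v|=7$, $du=0$ and $dv=u^2$, and the minimal model $(\bigwedge a,d)$ of the fibre $S^3$ with $|a|=3$ and $da=0$. Since both $P$ and $S^4$ are simply connected and $F=S^3$, the fibration is nilpotent, so by the discussion in Section~\ref{subsect:fibrations} a model of $P$ is the KS-extension $(\bigwedge(u,v,a),D)$ where $Du=0$, $Dv=u^2$, and $Da=\tau(a)+(\text{decomposable})$. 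The only possibility in degree $4$ for $\tau(a)$ is a multiple of $u$, so $Da=\lambda u$ for some $\lambda\in\RE$, with $\lambda$ a multiple of the (rational) Euler class of the bundle.

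The argument then splits into two cases according to whether $\lambda=0$. If $\lambda=0$, then $(\bigwedge(u,v,a),D)$ is already minimal with $Da=0$; this is exactly the minimal model of $S^3\times S^4$, which is formal as a product of formal manifolds, so $P$ is formal. If $\lambda\neq 0$, I would rescale $a$ so that $Da=u$; then $a$ and $u$ form an acyclic pair, and a change of generator absorbs them: setting $\widetilde v = v - a\cdot u$ (or an appropriate variant) one checks that the subalgebra $\bigwedge(a,u)$ contributes trivially to cohomology and the remaining generator has $D$ landing in the ideal generated by $a$, so that the cohomology is that of $S^7$. More cleanly, I would exhibit a quasi-isomorphism from $(\bigwedge w,0)$ with $|w|=7$, or directly verify that $H^*(\bigwedge(u,v,a),D)$ has the cohomology of $S^7$ and that the generator $w$ of $H^7$ is represented by a closed element (namely $a\cdot u$ up to scale and sign, since $D(a\cdot u)=u^2=Dv$ forces the relevant combination to be closed), giving a formal minimal model $(\bigwedge w, 0)$.

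The main obstacle is the bookkeeping in the $\lambda\neq 0$ case: one must confirm that after killing the acyclic pair $(a,u)$ the transgression relation $Dv=u^2$ does not obstruct collapse, i.e.\ that $u^2$ becomes exact in the quotient and no new cohomology survives except in degrees $0$ and $7$. Concretely, since $u=Da$ (up to scale), one has $u^2=D(a\cdot u)$ up to sign, so $u^2$ is exact and the class of $v$ is identified with a multiple of $a\cdot u$; the single surviving generator in top degree then gives $H^*(P)\cong H^*(S^7)$. In both cases the minimal model of $P$ agrees with a formal one, so $P$ is formal. Finally, to conclude for the Berger space I would invoke that by Berger's computation \cite{Berger} the space $\mathcal B$ is a rational homology sphere with the rational cohomology ring of an $S^3$-bundle over $S^4$; since rational cohomology determines the minimal model and the $S^3$-bundle case has just been shown to have the minimal model of $S^7$, the space $\mathcal B$ shares the minimal model of $S^7$ and is therefore formal.
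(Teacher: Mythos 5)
Your computation of the model of an $S^3$-bundle over $S^4$ and the case analysis on the Euler class is essentially identical to the paper's proof: the KS-extension $(\bigwedge(u,v,a),D)$ with $Da=\lambda u$, the $\lambda=0$ case giving the minimal model of $S^3\times S^4$, and the $\lambda\neq 0$ case giving the minimal model of $S^7$ via the change of generator $\widetilde v=v-a\cdot u$ (the paper's $\widetilde u=u-e^{-1}b\cdot a$). The only point where you diverge is the deduction for the Berger space: the paper invokes the result of Goette--Kitchloo--Shankar that $\mathcal B$ is \emph{diffeomorphic} to the $S^3$-bundle over $S^4$ with Euler class $-10[\omega]$, so the theorem applies to it literally, whereas you argue from Berger's computation that $\mathcal B$ is a rational homology sphere. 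Your route also works (and the paper itself notes it in the paragraph preceding the theorem), but be careful with the phrase ``rational cohomology determines the minimal model'': this is false in general --- it is precisely the content of formality --- and what saves you here is that for a simply connected space with the rational cohomology of $S^7$ the minimal model is forced degree by degree to be $(\bigwedge w,0)$, $|w|=7$; alternatively one can quote Lemma~\ref{lem:$3$-formal} since $b_1(\mathcal B)=b_2(\mathcal B)=0$. With that caveat the argument is complete.
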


\begin{proof}
Let  $e [\omega]$ be the Euler class of the bundle, where $e\in\mathbb{Z}$, and 
$[\omega]$ is the generator of the integral cohomology group $H^4(S^4,\mathbb{Z})$. 
The minimal model of  $S^4$ is the differential graded algebra $(\bigwedge(a,u), d)$, with 
$|a|=4$, $|u|=7$, $da=0$ and $du=a^2$. The minimal model of $S^3$ is 
$(\bigwedge b, d)$, where $|b|=3$ and $db=0$. Then, according to Section \ref{sect:formality}, a model of $P$ is 
$(\bigwedge(a,u,b), D)$, with 
$$
Da=0, \quad Du=a^2, \quad Db=e\, a.
$$

If $e=0$, then the DGA $(\bigwedge(a,u,b), d)$ is minimal, and so it is the minimal model of $P$. But 
$(\bigwedge(a,u,b), D)$ is the minimal model of $S^3\times S^4$, which is formal being the product of two formal manifolds. Therefore,
$P$ is formal. 

Suppose now that $e\neq 0$. In this case we have $du=a^2=e^{-1}D(b\cdot a)$. So the element $\widetilde u=u-e^{-1}b\cdot a$ has degree $7$ and $D\widetilde u=0$. 
Then the DGA  $(\bigwedge(a,\widetilde u,b), D)$ is also a model of $P$, because it is quasi-isomorphic to $(\bigwedge(a,u,b), D)$. Moreover, 
$(\bigwedge \widetilde u, 0)$ is the minimal model of $P$. In fact, $(\bigwedge\widetilde u, 0)$ is a minimal DGA, and a model of 
$P$ because, taking into account that  $Db=e\, a$, the map $f \colon (\bigwedge\widetilde u, 0) \to (\bigwedge(a,\widetilde u,b), D)$ 
given by $f(\widetilde u) = \widetilde u$
is a quasi-isomorphism. Therefore, $P$ is formal since $(\bigwedge\widetilde u, 0)$ is the minimal model of the sphere $S^7$, which is formal.

 In \cite{GKS} it is proved that the Berger space is diffeomorphic to  
the $S^3$-bundle over $S^4$ with Euler class $- 10\,[\omega]$. Thus $\mathcal B$ and $S^7$ have the same minimal model, and 
consequently $\mathcal B$ is formal.
\end{proof}

\subsection{The Aloff--Wallach spaces} \label{AW}

Let $k, l \in \mathbb{Z}$ be non-zero, co-prime integers,
and $S_{k,l}^1$ be a circle subgroup of $\SU(3)$ consisting of elements of the form
$$\left( \begin{array}{ccc}
e^{ik\theta} & 0 & 0 \\
0 & e^{il\theta} & 0 \\
0 & 0 & e^{im\theta} \end{array} \right),$$
where $k+l+m=0$. The Aloff--Wallach space $
W_{k,l}=\SU(3) / S_{k,l}^1
$
is the quotient of $\SU(3)$ by this circle subgroup \cite{AW}. Note that there are examples of different pairs $(k, l)$ such that the corresponding 
Aloff--Wallach spaces are homeomorphic but not diffeomorphic \cite{KS}.  
The spaces $W_{k,l}$ are called generic if $\{k, l, -(k + l) \}$ is different from $\{1,1, -2\}$ or $\{ 1,-1, 0\}$.  We will denote by $W_{1,1}$ and  $W_{1, -1}$  the two  exceptional Aloff--Wallach spaces.

By \cite{PP}  (see also \cite{Wang}) all the  spaces $W_{k,l}$ admit two homogeneous Einstein metrics.  
If $(k,l)=(1,1)$ one of those metrics is the $3$-Sasakian structure on the space 
$W_{1,1}$ mentioned 
in Section \ref{sect:formalG2}, and the other is induced by a proper  homogeneous nearly parallel $\G_2$-structure.  
If $(k, l ) = (1, -1)$, the space $W_{1,-1}$  admits only one  proper  homogeneous nearly parallel $\G_2$-structure,  up to homotheties \cite{PP}.
On the generic Aloff-Wallach spaces  the  two metrics are induced by proper  homogeneous nearly parallel $\G_2$-structures \cite{BFGK}, which  
by \cite{PP, Reidelgeld} are only two,  up to homotheties. The expressions 
of those two $\G_2$-structures are given in \cite{BallOl, CMS}. 

The manifold $W_{k,l}$ is simply connected with $H^2(W_{k,l}, \Z)\cong \Z$ and $H^3(W_{k,l}, \Z)=0$ (see \cite{KS}). 
Thus, $b_1(W_{k,l})=b_3(W_{k,l})=0$ and $b_2(W_{k,l})=1$. Hence, $W_{k,l}$ is formal by Lemma \ref{lem:$3$-formal}. 

In \cite{GuS}
(see also \cite{BallOl}) it is shown that there is a canonical fibration
$$ 
\pi: W_{k,l} \rightarrow \CP^2,
$$
whose fibers are the lens spaces $S^3/\mathbb{Z}_{\vert k+l \vert}$ if $k+l \neq 0$, or $S^1 \times S^2$ if $k+l=0$. 
In the two following theorems, we determine the minimal model of the total space 
of a $F$-fiber bundle over $\CP^2$, where $F=S^1 \times S^2$, or $F=S^3/{\mathbb{Z}}_p$ with $p > 0$, 
and we prove that such a space is formal.
In particular, we show that $W_{k,l}$ and $S^5\times S^2$ have the same minimal model.

\begin{theorem}\label{th:formal-AlofW1}
Let $P$ be the total space of an $S^3/{\mathbb{Z}}_p$-bundle $S^3/{\mathbb{Z}}_p \,\rightarrow \,P \,\rightarrow\,  \CP^2$ with $p > 0$. 
Then $P$ is formal. 
In particular, if $k+l\not=0$, the Aloff--Wallach space $W_{k,l}$ is formal, 
and $W_{k,l}$ and the product manifold $S^2\times S^5$ have the same minimal model.
\end{theorem}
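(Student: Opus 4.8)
The plan is to construct a model of $P$ by applying the KS-extension machinery for a rational $S^3$-fibration, exactly as in the Berger-space argument of Theorem~\ref{th:formal-Berger}, but now over the base $\CP^2$ instead of $S^4$. First I would recall from Section~\ref{sect:formality} that $S^3/\Z_p$ is rationally homotopy equivalent to $S^3$, so the fibration $S^3/\Z_p \to P \to \CP^2$ is a rational $S^3$-fibration with minimal fiber model $(\bigwedge b, d)$, $|b|=3$, $db=0$. The minimal model of $\CP^2$ is $(\bigwedge(a,u),d)$ with $|a|=2$, $|u|=5$, $da=0$, $du=a^3$. Hence a model of $P$ is the KS-extension $(\bigwedge(a,u,b),D)$ with
$$
Da=0,\qquad Du=a^3,\qquad Db=\Theta(b),
$$
where $\Theta(b)\in\bigwedge^4(a,u)$ is a closed element of degree $4$ representing the rational Euler class of the fibration. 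Since the only degree-$4$ generator available is $a^2$, we have $Db=e\,a^2$ for some $e\in\Q$.

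Next I would split into the two cases according to whether $e=0$. If $e=0$, then $(\bigwedge(a,u,b),D)$ is already minimal and is simultaneously the minimal model of $\CP^2\times S^3$, which is formal as a product of formal manifolds; hence $P$ is formal. If $e\neq 0$, the key observation is that the relation $Db=e\,a^2$ lets us kill the class $a^2$: from $Du=a^3=a\cdot a^2=e^{-1}a\cdot Db=e^{-1}D(a\cdot b)$ (using $Da=0$), I set $\widetilde u=u-e^{-1}a\cdot b$, which has degree $5$ and satisfies $D\widetilde u=0$. Then $(\bigwedge(a,u,b),D)$ is quasi-isomorphic to the algebra where $a$ and $b$ are eliminated, leaving $(\bigwedge \widetilde u,0)$ with $|\widetilde u|=5$. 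I would verify that the inclusion $(\bigwedge\widetilde u,0)\to(\bigwedge(a,u,b),D)$ is a quasi-isomorphism, exactly as in the proof of Theorem~\ref{th:formal-Berger}: the generators $a,b$ form an acyclic pair since $Db=e\,a^2$ forces $a^2$ exact, $a^3$ hence the whole ideal generated by $a$ collapses in cohomology. This shows $P$ is formal and, being minimal, $(\bigwedge\widetilde u,0)$ is the minimal model of $P$.

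Finally, for the application to Aloff--Wallach spaces, when $k+l\neq 0$ the fibre is $S^3/\Z_{|k+l|}$ with $|k+l|>0$, so $W_{k,l}$ is an instance of $P$ above and is therefore formal. To identify the minimal model with that of $S^2\times S^5$, I would compute the minimal model of $S^2\times S^5$: it is $(\bigwedge(a,v,\widetilde u),d)$ with $|a|=2$, $|v|=3$, $|\widetilde u|=5$, $da=0$, $dv=a^2$, $d\widetilde u=0$, and after cancelling the acyclic pair $(a,v)$ this reduces to $(\bigwedge\widetilde u,0)$, $|\widetilde u|=5$; alternatively one simply observes $H^*(S^2\times S^5)$ has the same Poincar\'e polynomial as $H^*(W_{k,l})$ computed in Section~\ref{AW}. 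Since both $W_{k,l}$ (for $e\neq 0$) and $S^2\times S^5$ have minimal model $(\bigwedge\widetilde u,0)$, they share the same minimal model. The main subtlety to get right is the rational Euler-class argument: one must confirm that the fibration is nilpotent so that the KS-extension applies (which holds because $\CP^2$ is simply connected and $S^3/\Z_p$ is rationally an $S^3$), and that the nonvanishing of $e$ can indeed be assumed for the geometric $W_{k,l}$ — but formality holds regardless of whether $e$ vanishes, so the conclusion is uniform.
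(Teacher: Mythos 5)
Your setup is correct and matches the paper's: the KS-model of $P$ is $(\bigwedge(a,u,b),D)$ with $Da=0$, $Du=a^3$, $Db=e\,a^2$, the case $e=0$ gives the minimal model of $\CP^2\times S^3$, and for $e\neq 0$ the element $\widetilde u=u-e^{-1}a\cdot b$ is closed. But the next step contains a genuine error: you claim that $(a,b)$ with $Db=e\,a^2$ is an ``acyclic pair'' that can be cancelled, leaving $(\bigwedge\widetilde u,0)$ as the minimal model. A contractible pair requires the differential of one generator to be \emph{another generator} (a linear term), which is exactly what happens in the Berger case of Theorem~\ref{th:formal-Berger}, where $Db=e\,a$ with $|a|=4$; that is why the model there collapses to the minimal model of $S^7$. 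Here $Db=e\,a^2$ is decomposable, so $(\bigwedge(a,\widetilde u,b),D)$ is \emph{already minimal} and nothing cancels. Its cohomology is $\langle 1,[a],[\widetilde u],[a]\cdot[\widetilde u]\rangle$, i.e.\ $b_2=1$, consistent with $b_2(P)=1$ (and with $b_2(W_{k,l})=1$ as recorded in Subsection~\ref{AW}), whereas your claimed minimal model $(\bigwedge\widetilde u,0)$ is the minimal model of $S^7$ and has $b_2=0$. The same mistake recurs when you assert that the minimal model $(\bigwedge(a,v,\widetilde u),d)$ of $S^2\times S^5$, with $dv=a^2$, ``reduces'' to $(\bigwedge\widetilde u,0)$: it does not, since $H^2(S^2\times S^5)\neq 0$.

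The correct conclusion, as in the paper, is that for $e\neq 0$ the minimal DGA $(\bigwedge(a,\widetilde u,b),D)$ is itself the minimal model of $P$ and coincides with the minimal model of $S^5\times S^2$ (it is $\bigwedge\widetilde u\otimes\bigwedge(a,b)$ with $Db=e\,a^2$), which is formal; so your final statements happen to be recoverable, but not by the cancellation you describe. One further point you gloss over: to conclude that $W_{k,l}$ (for $k+l\neq 0$) has the minimal model of $S^2\times S^5$ rather than of $S^3\times\CP^2$, you must rule out $e=0$ for the geometric fibration; the paper does this cleanly by observing that the two candidate minimal models are distinguished by $b_3$, and $b_3(W_{k,l})=0$.
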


\begin{proof}
We will determine the minimal model of $P$ and show that it is formal. 
According to Section \ref{sect:formality},  
the fibre bundle $S^3/{\mathbb{Z}}_p \,\rightarrow \,P \,\rightarrow\,  \CP^2$ is a rational $S^3$-fibration. 
Let $e\,a^2$ be its (rational) Euler class, where $e\in\mathbb{Q}$ and $a=[\omega]$ is the generator of the integral cohomology group 
$H^2( \CP^2,\mathbb{Z})$.  By \cite{FHT}, if $({\mathcal{A}}, d_{{\mathcal{A}}})$
is a model of $\CP^2$, we have that $({\mathcal{A}}\otimes \bigwedge u, d)$, with $\vert u\vert =3$, 
$d|_{\mathcal{A}} = d_{{\mathcal{A}}}$ and $du = e\,a^2$, is a model of $P$.

The minimal model of $\CP^2$ is the differential graded algebra $(\bigwedge(a,x), d)$, where $|a|=2$, $|x|=5$, $da=0$ and
$dx=a^3$. Therefore, the KS-model of $P$ is $(\bigwedge(a,x, u), d)$, with $du=e\,a^2$. 

If $e=0$ then the differential graded algebra $(\bigwedge(a,x, u), d)$ is minimal, and so it is the minimal model of $P$. 
Moreover, $(\bigwedge(a,x, u), d)$ is the minimal model of $S^3\times \CP^2$, which is formal being the product of two formal manifolds.
Thus, $P$ is formal.

Suppose now that $e\neq 0$. In this case we have $a^2 = e^{-1} du$, and so the element of degree five  $\widetilde x=x-e^{-1} a\cdot u$ 
is such that $d\widetilde x=0$. Clearly $(\bigwedge(a,\widetilde x, u), d)$ is a minimal DGA, and a model of $P$ because the map 
$f \colon (\bigwedge(a,\widetilde x, u), d) \to (\bigwedge(a,x, u), d)$ given by $f(a)=a$, $f(\widetilde x) = x-e^{-1} a\cdot u$ and $f(u)=u$
is a quasi-isomorphism. Therefore, $(\bigwedge(a,\widetilde x, u), d)$ is the minimal model of $P$. Thus, $P$ is formal since
$(\bigwedge(a,\widetilde x, u), d) = (\bigwedge \widetilde x \otimes \bigwedge(a,u), d)$ is
 the minimal model of $S^5\times S^2$, which is formal.
 
All this shows not only that $P$ is formal but also the minimal model of $P$. Indeed, 
 the minimal model of $P$ is either the minimal model of $S^3\times \CP^2$ or the minimal model of $S^5 \times S^2$.
Thus, if the third Betti number of $P$ is $b_3(P)=1$, then $P$ and $S^3 \times \CP^2$
have the same minimal model, while if $b_3(P)=0$, then the minimal model of $P$ is
the minimal model of $S^5 \times S^2$. Therefore, for $k+l\not=0$, the space $W_{k,l}$
and $S^5\times S^2$ have the same minimal model since $b_3(W_{k,l})=0$.
\end{proof}

\begin{theorem}\label{th:formal-AlofW2}
The total space of an $S^1 \times S^2$-bundle $S^1 \times S^2 \,\rightarrow \,P \,\rightarrow\,  \CP^2$ 
is formal.  
In particular,  for $k\not=0$, the Aloff--Wallach space $W_{k,-k}$ is formal, and $W_{k,-k}$
 and the product manifold $S^2\times S^5$ have the same minimal model.
\end{theorem}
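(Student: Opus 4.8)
The plan is to follow exactly the strategy established in the proof of Theorem~\ref{th:formal-AlofW1}, adapted to the fibre $F = S^1 \times S^2$ whose minimal model and KS-extension data were recorded in Section~\ref{sect:formality}. First I would write down a model of $P$ using the second case of the fibration machinery from that section. A model of the base $\CP^2$ is the minimal model $(\bigwedge(a,x),d)$ with $|a|=2$, $|x|=5$, $da=0$, $dx=a^3$. The minimal model of the fibre is $(\bigwedge(b,c,x'),d)$ with $|b|=1$, $|c|=2$, $|x'|=3$ and $db=0$, $dc=0$, $dx'=c^2$ (I rename the fibre's degree-$3$ generator to $x'$ to avoid clashing with the base generator $x$). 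The KS-extension then gives a model $(\bigwedge(a,x)\otimes\bigwedge(b,c,x'),D)$ with
$$
Db=\tau(b),\quad Dc=b\,a_2'+\tau(c),\quad Dx'=c^2+c\,a_2+b\,a_3+\tau(x'),
$$
where $\tau(b)\in{\mathcal A}^2$, $a_2,a_2'\in{\mathcal A}^2$, $a_3\in{\mathcal A}^3$. Over $\CP^2$ the only degree-$2$ cohomology class is a multiple of $a$ and there is no degree-$3$ closed element in the model that is not exact, so $\tau(b)=\alpha\,a$ for some $\alpha\in\mathbb Q$, and similarly the transgression terms simplify drastically.

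The key step is the case analysis on whether $\tau(b)$ vanishes. If $\tau(b)=0$, then $b$ is a closed degree-$1$ generator, which forces $b_1(P)\neq 0$; for the Aloff--Wallach case this does occur since $W_{k,-k}$ has $b_1=0$ only when the circle does not split off, so I must handle both possibilities abstractly. If $\tau(b)=\alpha\,a$ with $\alpha\neq 0$, then $a=\alpha^{-1}Db$ is exact in the model, and I can eliminate the generators $a$ and $b$ by a quasi-isomorphism exactly as in the earlier proofs: one replaces the higher generators by corrected versions (e.g.\ set $\widetilde c=c-\alpha^{-1}a_2'\,b$ and correct $x'$ and $x$ accordingly) so that the differential of each corrected generator has no dependence on $a$ or $b$. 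After these substitutions the reduced model becomes a minimal DGA on the surviving generators, and I would check that it coincides with the minimal model of $S^2\times S^5$. The remaining subcase $\tau(b)=0$ but some higher transgression nonzero is handled by the same telescoping of generators, yielding formality either way.

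Finally I would invoke the same bookkeeping as in Theorem~\ref{th:formal-AlofW1} to identify the minimal model precisely. Since in each subcase the reduced minimal model is either that of $S^3\times\CP^2$, of $S^1\times S^2\times\CP^2$, or of $S^2\times S^5$ --- all products of formal spaces and hence formal --- formality of $P$ follows immediately once the quasi-isomorphism is verified. For the Aloff--Wallach application, I would note that $W_{k,-k}$ is simply connected with $b_2=1$ and $b_3=0$ by the cohomology computation cited from~\cite{KS}, so the relevant subcase is the one reducing to the minimal model of $S^2\times S^5$; alternatively, one may simply quote Lemma~\ref{lem:$3$-formal}, under which $W_{k,-k}$ is automatically $3$-formal hence formal, and then match Betti numbers to pin down the minimal model as that of $S^2\times S^5$.

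I expect the main obstacle to be controlling the cross terms $c\,a_2$, $b\,a_3$ and the transgression $\tau(x')$ in $Dx'$ simultaneously with the elimination of $a$ and $b$: unlike the clean rational $S^3$-fibration of Theorem~\ref{th:formal-AlofW1}, here the $S^1\times S^2$ fibre carries a genuine relation $dx'=c^2$ coupled to several base classes, so the corrected degree-$3$ generator must be chosen carefully to ensure its differential is decomposable and that no spurious closed-but-inexact element survives in the ideal. Verifying that the final reduced algebra is genuinely minimal (no linear parts in the differentials) and matches $S^2\times S^5$ is where the calculation must be done honestly rather than quoted.
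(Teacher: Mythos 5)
Your overall strategy --- writing the KS-model of the bundle, normalizing the differential, and eliminating generators case by case --- is the same as the paper's, and the subcases where the transgression of the circle generator is nonzero are handled essentially as the paper handles them. The genuine gap is the subcase you dismiss with ``the remaining subcase $\tau(b)=0$ but some higher transgression nonzero is handled by the same telescoping of generators.'' After the normalizations (completing the square in $c$ kills the $a c$ cross term, and $D^2y=0$ forces $Dc=0$), the model becomes
$$Da=0,\quad Dx=a^3,\quad Db=e\,a,\quad Dc=0,\quad Dy=c^2+f\,a^2,$$
and when $e=0$ but $f\neq 0$ this DGA is \emph{already minimal}: every differential is decomposable and no generator has a linear image, so there is nothing to telescope. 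Moreover its cohomology ring, in which $[c]^2=-f[a]^2\neq 0$, is not that of $S^3\times\CP^2$, of $S^1\times S^2\times\CP^2$, or of $S^2\times S^5$, so your claimed trichotomy of reduced models is false ($S^3\times\CP^2$ in fact belongs to the $S^3/\Z_p$ theorem, not this one). The paper must, and does, prove formality of this minimal model by a separate argument: it is the minimal model of $S^1\times X$, where $X$ is the total space of an $S^2$-bundle over $\CP^2$, and formality of the simply connected $6$-manifold $X$ is established via $2$-formality (Definition~\ref{def:primera} together with Theorem~\ref{fm2:criterio2}). Without an argument of this kind your proof of the general statement about arbitrary $S^1\times S^2$-bundles is incomplete.

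For the ``in particular'' clause your fallback is sound: $W_{k,-k}$ has $b_1=0$ and $b_2=1$, hence is formal by the paper's lemma on $3$-formality, and since the minimal model of a formal space is determined by its rational cohomology ring, matching $H^*(W_{k,-k};\mathbb{Q})\cong H^*(S^2\times S^5;\mathbb{Q})$ (using $b_3=0$ and $\xi^2=0$, as $H^4$ is torsion) identifies the minimal model. But that shortcut does not rescue the general assertion of the theorem.
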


\begin{proof}
We know that the minimal model of $\CP^2$ is the differential graded algebra $(\bigwedge(a,x), d)$, where $|a|=2$, $|x|=5$, $da=0$ and
$dx=a^3$. Moreover, the minimal model of $S^1\times S^2$ is $(\bigwedge(b,c,y), d)$,
where $|b|=1$, $|c|=2$, $|y|=3$, and the differential map is defined by $db=0$, $dc=0$ and $dy=c^2$. 
Hence the KS-model of $P$ is of the form
 $$
\big(\bigwedge(a,x,b,c,y),D\big), \,\,\, Da=0, \, Dx=a^3, \, Db= e\,a, \, 
Dc = g\,ab, \, Dy=c^2+f \, a^2+h\,ac,
$$
for some $e,f,g,h\in \Z$. First note that taking $\tilde c=c+\frac12 h\, a$, we have $D\tilde c = g\,ab$ and $Dy=\tilde c^2+\tilde f a^2$,
for some $\tilde f$. Therefore we can assume in the above that $h=0$. We do that without changing the names of the variables.
By noting that $D^2(y)=2g\, abc=0$, we get $g=0$. Altogether we get the model of $P$ as
 $$
 \big(\bigwedge(a,x,b,c,y),D\big), \,\,\, Da=0, \, Dx=a^3, \, Db= e\,a, \,  Dc = 0, \, Dy=c^2+f \, a^2.
$$

If $e=f=0$ then $(\bigwedge(a,x,b,c,y),D)$ is the minimal model of $S^1\times S^2\times \CP^2$, which is formal being the product 
of three formal manifolds. Hence $P$ is formal.

Suppose now that $e\neq 0$ and $f=0$. In this case, $a= e^{-1} Db$. Then the element $\widetilde x$ 
of degree $5$ given by $\widetilde x=x-e^{-1} a\cdot u$ is such that $D\widetilde x=0$. Thus,
$(\bigwedge(a,\widetilde x,b,c,y),D)$ is a model of $P$ because this DGA is quasi-isomorphic
to $(\bigwedge(a, x,b,c,y),D)$. Moreover, $(\bigwedge(\widetilde x, c, y), D)$ is a minimal DGA, and a model of $P$. In fact,
taking into account that $a= e^{-1} Db$, one can check that the map 
$f \colon (\bigwedge(\widetilde x, c, y), D) \to (\bigwedge(a,\widetilde x,b,c,y),D)$ given by $f(\widetilde x) = \widetilde x$,  $f(c)=c$
and $f(y)=y$ is a quasi-isomorphism. Therefore, $(\bigwedge(\widetilde x, c, y), D)$ is the minimal model of $P$. Thus, $P$ is formal since
$(\bigwedge(\widetilde x, c, y), D)$ is the minimal model of $S^5\times S^2$, which is formal.

If $e\neq0$ and $f \neq 0$, as before we determine the minimal model of $P$. Take the elements $\widetilde x=x- e^{-1} b \cdot a^2$
and $\widetilde y=y- f e^{-1} a \cdot b$ of degree $5$ and $3$, respectively. Then, we get the model 
$(\bigwedge(a,\widetilde x,b,c,\widetilde y),D)$ of $P$ with $Da=0$, $d\widetilde x=0$, $Db= e\,a$,  $Dc = 0$ and $D\widetilde y=c^2$. 
Consider the differential graded algebra $(\bigwedge(\widetilde x, c, \widetilde y), D)$, which is a minimal DGA, and a model of $P$. In fact, the map 
$f \colon (\bigwedge(\widetilde x, c, \widetilde  y), D) \to (\bigwedge(a,\widetilde x, b,c,\widetilde y),D)$ given by 
$f(\widetilde x) = \widetilde x$,  $f(c)=c$
and $f(\widetilde y)=\widetilde y$ is a quasi-isomorphism. Therefore, $(\bigwedge(\widetilde x, c, \widetilde y), D)$ is the minimal model of $P$. 
Thus, $P$ is formal since
$(\bigwedge(\widetilde x, c, \widetilde y), D)$ is the minimal model of $S^5\times S^2$, which is formal.

Finally, suppose that $e=0$ and $f\neq 0$. Then the model $(\bigwedge(a,x,b,c,y),D)$ of $P$ is such that
$Da=0$, $Dx=a^3$, $Db= 0$, $Dc = 0$ and $Dy=c^2+f \, a^2$. This implies that the differential graded algebra
$(\bigwedge(a,x,b,c,y),D)$ is minimal, and so it is the minimal model of $P$. To show that $P$ is formal we proceed as follows. 
First note that $\bigwedge(a,x,b,c,y) = \bigwedge b \otimes \bigwedge(a,x,c,y)$. Clearly, $(\bigwedge b,0)$ is the minimal model 
of $S^1$, which is formal, and $(\bigwedge(a,x,c,y), D)$ is the minimal model of the total space $X$ of an $S^2$-bundle
$$
S^2 \,\rightarrow \,X \,\rightarrow\, \CP^2 .
$$
The de Rham cohomology of $X$ is $H^1(X)=H^3(X)=H^5(X)=0$, $H^2(X)=\la [a],[c]\ra$, 
$H^4(X)=\la [a]^2, [a]\cdot [c]\ra$ and $H^6(X)=\la [a]^2\cdot [c]\ra$. Thus, the $6$-manifold $X$ is formal since $b_1(X)=0$. In fact,
denote by $V$ the graded vector space generated by the elements $a,x,c$ and $y$. Because $V$ is a graded vector space,
 we can consider the space $V^i$ of generators of degree $i$, which decomposes as a direct sum $V^i = C^i \oplus N^i$, with 
$C^1 \,=\, N^1\,=\,0$, $C^2\,=\,\langle a, c \rangle$ and $N^2\,=\,0$. Thus, according to 
Definition \ref{def:primera}, the manifold
$X$ is  $2$-formal and, by Theorem \ref{fm2:criterio2}, $X$ is formal.

Therefore, the minimal model $(\bigwedge(a,x,b,c,y),D)$ of $P$ is the minimal model of the product manifold $S^1 \times X$,
which is formal being the product of two formal manifolds. Hence $P$ is formal.

Thus, if $P$ is the total space of an $S^1 \times S^2$-bundle over $\CP^2$, then $P$ and 
$S^5\times S^2$ have the same minimal model, 
or the minimal model of $P$ is the minimal model of the manifold $M$, where $M=S^1 \times S^2 \times \CP^2$, or $M=S^1 \times X$. 
So, for $k\not=0$, $W_{k,-k}$ and $S^5\times S^2$ have the same minimal model since $b_1(W_{k,-k})=0$.
\end{proof}

\begin{remark} \label{def-new-AW}
In \cite{EZ} it is proved that, for $k, l$ co-prime integers, the space $W_{k,l}= \SU(3) / S_{k,l}^1$
can be described as follows. Let $X$ be the total space of the $S^2$-bundle 
$S^2 {\longrightarrow} X\stackrel{\pi}{\longrightarrow} \mathbb{CP}^2$ with Pontryagin class $p_1= - 3$ and Stiefel-Whitney class $w_2 \not=0$.
Then, $W_{k,l}$ is the total space of the circle bundle 
\begin{equation}\label{def:AW-S1-bundle}
S^1{\longrightarrow} \, W_{k,l}\stackrel{\varpi}{\longrightarrow} X
\end{equation}
determined by the Euler class $k a+ l b$, where $a, b$ are the generators of $H^2(X, \Z)\cong \Z^2$, and the base space $X= \SU(3) / T^2$.

Note that the space $W_{k,l}$ thus defined can be also considered as the aforementioned $F$-fiber bundle
$F\, {\longrightarrow} W_{k,l} {\longrightarrow}\, \mathbb{CP}^2$, where $F= S^1\times S^2$, or $F=S^3/{\mathbb{Z}}_p$ with $p\not=0$.
Indeed, for $z\in \mathbb{CP}^2$, consider $X_{z} = \pi^{-1}(z)$ and $W_{k,l}(z)=\rm{pr}^{-1}(z)$, where $\rm {pr} = \pi \circ \varpi$. If we restrict to each 
$X_{z}$, this circle bundle $S^1\, {\longrightarrow}  W_{k,l}(z)\, {\longrightarrow} X_{z}$ must be $W_{k,l}(z) = S^1 \times S^2$ if the Euler class is $e=0$, or a lens space
$S^3/{\mathbb{Z}}_e$ if the Euler class is $e\neq 0$. Varying over all $z\in \mathbb{CP}^2$, we have a fiber bundle
$F{\longrightarrow} W_{k,l} {\longrightarrow} \mathbb{CP}^2$, where $F= S^1 \times S^2$ or $F=S^3/{\mathbb{Z}}_e$.
\end{remark}

\section{The nearly parallel $\G_2$-manifold $Q(1,1,1)$}\label{ex:nearlyparallel}

In this section, we consider 
the regular Sasaki--Einstein $7$-manifold $Q(1,1,1)$ and we work out explicitly, in coordinates,
the $S^1$-family of nearly parallel $\G_2$-structures on it, which we will use in
Section \ref{3folds.SE} and Section \ref{sect:Spin(7)}.

We start from the K\"ahler manifold $X=S^2 \times S^2\times S^2$ with
K\"ahler form
\begin{equation}\label{omega-X}
\omega\,=\,\omega_1 +\omega_2 + \omega_3\, ,
\end{equation}
where $\omega_1$, $\omega_2$ and $\omega_3$ are the generators of the integral cohomology group for each of the 
$S^2$-factors on $S^2 \times S^2\times S^2$.
Let $M$ be the total space of the principal $S^1$-bundle
\begin{equation}\label{Q.bundle}
S^1 \,\hookrightarrow \,M \,\overset{\pi_M}\too\, X=S^2\times S^2\times S^2\, ,
\end{equation}
with Euler class $[\omega]\in H^2(X,\mathbb{Z})$. Then, by application of \cite[cf.~Theorem 2.8]{BG-2000}, $M$ is a simply connected compact Sasaki--Einstein manifold,
with contact form $\eta$ such that $d \eta=2\pi_M^*(\omega)$. From
\cite[\S 4.2]{BFGK} (see also \cite{FK, FKMS}), we know that $M$ is the homogeneous space
\begin{equation}\label{q111}
M= Q(1,1,1) = \Big(\SU(2) \times \SU(2) \times\SU(2)\Big)/ \Big(\U(1) \times\U(1)\Big).
\end{equation}
Throughout this section, the notation $M$ and $X$ will always mean the
manifolds defined in \eqref{Q.bundle} and~\eqref{q111}.

We apply the Kobayashi construction \cite{Kobayashi} 
to determine the
contact form for the Sasaki--Einstein metric of the principal circle bundle $M$.
Let  $\theta_j\in (0,\pi)$ and $\phi_j\in (0,2\pi)$, $j=1,2,3$, be the
standard spherical coordinates on each of the $S^2$-factors in $X$ and
$z_j=\cot(\theta_j/2)e^{i\phi_j}\in\CX$
a complex coordinate defined via the stereographic projection.
The Fubini--Study metric of volume 1 on the $j$-th $S^2$ factor has the K\"ahler
form  $\omega_j = \frac{1}{4\pi} d \dc \log (1 + z_j\bar{z}_j) =
\alpha_{2j-1}\we\alpha_{2j}$, where we used an orthonormal co-frame field
\begin{equation} \label{1-forms}
\alpha_{2j-1}= \frac{-1}{2\sqrt{\pi}} \, d\theta_j, \qquad
\alpha_{2j}= \frac1{2\sqrt{\pi}} \, \sin\theta_j\, d\phi_j, \qquad
j=1,2,3,
\end{equation}
on the open dense region $U_X\subset X$ defined by $z_j\neq 0$ for all $j$.
The K\"ahler form~\eqref{omega-X} on $X$
restricts to an exact form on $U_X$
\begin{equation}\label{omega}
\omega =
\alpha_1 \wedge\alpha_2 + \alpha_3 \wedge\alpha_4 + \alpha_5 \wedge\alpha_6 =
\frac{1}{4\pi} \sum_{j=1}^{3} d(\cos\theta_j \, d\phi_j).
\end{equation}
The principal $S^1$-bundle $M$ trivializes over $U_X$,
$\pi_M^{-1}(U_X)\cong U_X\times S^1$. Let $s\in (0, 4\pi)$ denote a coordinate
on the fibre $S^1$ and consider on $\pi_M^{-1}(U_X)$ a $1$-form
\begin{equation}\label{eta}
\eta =
\frac{1}{2\pi} \bigl( - ds + \sum_{j=1}^3 \cos\theta_j\, d\phi_j \bigr).
\end{equation}
Noting that $[\omega]\in H^2(X,\bZ)$ is an integral cohomology class, it can
be checked by following the general construction in \cite[\S 2]{Kobayashi}
that $\eta$ extends smoothly from $\pi_M^{-1}(U_X)$ to a well-defined
connection $1$-form on all of~$M$ (still denoted by $\eta$). Since
\begin{equation}\label{relation1}
d\eta = 2\pi_M^*(\omega),
\end{equation}
it also follows that $\eta$ is a well-defined contact form on~$M$
(cf.~Section~\ref{SE.and.3S}).

We shall slightly abuse the notation by writing $\alpha_{i}$ also for the
lifting to $M$ via $\pi_M^*$ of the local $1$-forms \eqref{1-forms}.
Then $\alpha_1+i\alpha_2,\; \alpha_3+i\alpha_4,\;
\alpha_5+i\alpha_6$ can be considered as point-wise orthonormal $(1,0)$-forms 
on the complex cone $\pi^{-1}(U_X)\times\RE^+$ as $\pi_M$ 
is a Riemannian submersion and the associated bundle projection
$M\times\RE^+ \to X$ is holomorphic. 

The Riemannian cone $M\times\RE^+$ is simply-connected and Ricci-flat
K\"ahler. Therefore, $M\times\RE^+$ has a non-vanishing holomorphic
$(4,0)$-form $\widehat\Psi$, which is parallel with respect to the K\"ahler
metric. In local coordinates, the K\"ahler form on $M\times\RE^+$ is given by
$rdr\we\eta + r^2 \sum_{j=1}^3 \alpha_{2j-1}\we\alpha_{2j}$
(cf.~\eqref{K.cone}) and $\widehat\Psi$ can be written as
\begin{equation}\label{holo.vol}
\widehat\Psi = r^3 e^{i\mu} \left(dr+i\frac{\pi}{2}r\eta\right) \we (\alpha_1+i\alpha_2)\we
(\alpha_3 +i\alpha_4)\we (\alpha_5+i\alpha_6),
\end{equation}
where a smooth real function $\mu$ on $M$ is determined, up to
adding a constant, by the condition $d\widehat\Psi=0$. Setting
\begin{equation} \label{horizontal.3}
\Psi =
e^{-is}(\alpha_1+i\alpha_2)\we (\alpha_3+i\alpha_4)\we (\alpha_5+i\alpha_6),
\end{equation}
it is straightforward to check that
\begin{equation}\label{relation}
d\Psi = -2\pi i\,\Psi\we\eta,
\end{equation}
thus, noting also that $\Psi\we d\eta=0$, one can take $\mu=-s$
in~\eqref{holo.vol}. The holomorphic \mbox{$4$-form}
$\widehat\Psi$ extends from $\pi^{-1}(U_X)\times\RE^+$ to all of
$M\times\RE^+$ (e.g.\ by patching with a different choice of spherical 
coordinate neighbourhood in~\eqref{1-forms}). Respectively, $\Psi$ extends to
a well-defined horizontal complex volume form on~$M$.

We can now write a coordinate expression for the $S^1$-family of
nearly parallel $\G_2$-structures $\varphi_t$ on $M$ induced by the
Sasaki--Einstein structure, cf.~\eqref{def:G2form-sasaki-eintein},
\begin{align} \label{def:G2-M}
\varphi_t =  \frac\pi{2} \omega \wedge \eta + \cos (s+t)\, \Psi_+ + \sin (s+t)\, \Psi_-
=  \frac\pi{2} \omega \wedge \eta + \re (e^{-it} \Psi )
\end{align}
where the real $3$-forms $\Psi_+,\,\Psi_-$ in~\eqref{def:G2-M}
are determined by $\Psi = e^{-is}(\Psi_{+} + i \Psi_{-})$ and can be written
in coordinates using~\eqref{horizontal.3}. The respective $\G_2$ $4$-forms are
$$
\star_{\varphi_t}\varphi_t = \frac{1}{2} \omega\we\omega + (\cos (s+t) \,
\Psi_{-} -  \sin (s+t)\, \Psi_{+}) \wedge \eta  =  \frac{1}{2}
\omega\we\omega + \im (e^{-it} \Psi ) \wedge \eta,
$$
and from \eqref{relation1} and \eqref{relation} it follows that
\begin{equation}\label{4-form}
d\varphi_t = 2\pi  \star\varphi_t\,, 
\end{equation}
for each $t$.
The induced metric $g=g_{\varphi_t}$ on $M$ does not depend on $t$ and is given by
$$
g= \frac{1}{16} \Big(ds - \sum_{j=1}^{3} \cos\theta_j\, d \phi_j\Big)^2 + \frac{1}{4\pi}  \sum_{j=1}^{3} \Big(d\theta_{i}^2 + \sin^2\theta_j\, d \phi_j^2\Big),
$$
so the local coframe of $1$-forms  $\{\alpha_1, \ldots, \alpha_6,  \alpha_7=\eta\}$ is orthonormal.

\section{Associative 3-folds in Sasaki--Einstein $7$-manifolds} \label{3folds.SE} 

We now turn to consider minimal associative 3-folds in nearly parallel
$\G_2$-manifolds. In this section, we deal with the case when the nearly
parallel $\G_2$-structure arises from a Sasaki--Einstein structure on a
$7$-manifold, with particular attention to the regular Sasaki--Einstein
manifolds $S_k$ and $Q(1,1,1)$ 
discussed in Section~\ref{sect:formalG2} and Section~\ref{ex:nearlyparallel},
respectively.

Firstly, let us recall that if  $(P, \varphi)$ is a nearly-parallel $\G_2$-manifold, then an {\em associative $3$-fold}
is an oriented $3$-dimensional submanifold $X \subset P$ such that
$$\varphi|_X = \vol_X.$$

In particular, associative $3$-folds in $P$ are the links 
of Cayley cones in the metric cone over $P$, and hence are also minimal submanifolds of $P$. Even more,
Ball and Madnick in \cite[Theorem~2.18]{BallM} prove that the largest torsion
class of $\G_2$-structures for which every associative 3-fold is minimal is
given by the nearly parallel $G_2$-structures.

Let $(S,g_S)$ be a Sasaki--Einstein $7$-manifold and $\varphi_t$ the
corresponding $S^1$-family of nearly parallel $\G_2$-structures on $S$
defined in~\eqref{def:G2form-sasaki-eintein}. Assume that $S$ is
simply-connected, so the metric cone $S\times\RE^+$ has
holonomy in~$\SU(4)$, and denote by $\omega^c$ the K\"ahler form and by
$\widehat\Psi$ a non-vanishing holomorphic $4$-form on $S\times\RE^+$. Then
$S\times\RE^+$ has a 1-parameter family of torsion-free $\Spin(7)$-structures
induced by the closed $4$-forms
\begin{equation}\label{SU4-Spin7}
\widehat\Phi_t = \frac12 \omega^c\we\omega^c +
\re (e^{-it}\widehat\Psi), \quad  t\in\RE.
\end{equation}
cf.~\cite[Prop.~13.1.4]{Joyce}. Recall that we identify the $7$-manifold $S$
as a submanifold $S\times\{1\}$ of the cone. The nearly parallel
$\G_2$-structures $\varphi_t$ on~$S$ are then related to the
$\Spin(7)$-structures \eqref{SU4-Spin7} by
$$
\varphi_t=\partial_r\lrcorner\widehat\Phi_t|_{r=1},\quad
\text{for each } t,
$$
cf.~\cite[pp.~723--724]{AS}. If a $4$-dimensional submanifold $Z$ of
$S\times\RE^+$ is calibrated by $\widehat\Phi$ (i.e.\ $Z$ is a Cayley submanifold)
and $Z=Y\times\RE^+$ for some submanifold $Y\subset S$, then 
$(\varphi_t|_Y)\wedge dr$ is the volume form of the
conical metric $r^2 g_S+dr^2$ on $Y\times\RE^+$ as $\partial_r$ defines a unit
normal vector field along each $Y\times\{r\}$. It follows that $Y$ is an
associative 3-fold in~$S$. Conversely, if $Y\subset S$ is an associative
3-fold then $Z=Y\times\RE^+\subset S\times\RE^+$ is Cayley.

Examples of Cayley submanifolds, in the case when the $\Spin(7)$-structure
is of the form~\eqref{SU4-Spin7} induced by an $\SU(4)$-structure,
include complex surfaces and special Lagrangian submanifolds. We shall
consider these two possibilities in order.

If $Z=Y\times\RE^+$ is a complex surface in $S\times\RE^+$, so the tangent
spaces of $Z$ are preserved by the (integrable) almost complex structure $J$
on $S\times\RE^+$, then $Y$ is an `invariant' submanifold for the contact
structure on $S$ as defined in~\cite[\S 8.1]{Blair}. More explicitly, the
endomorphism $\Phi$ discussed in Section~\ref{SE.and.3S} maps each tangent
space of $Y$ into itself and the Reeb vector field~$\xi$ is tangent to~$Y$.

\begin{prop}\label{invar.assoc}
Let $S$ be a regular Sasaki--Einstein $7$-manifold with contact form
$\eta$ arising from a principal $S^1$-bundle $\pi:S\to X$ with Euler class
$c_1=[\omega]$, where $X$ is a projective complex 3-fold with K\"ahler
form $\omega$ and $d\eta=\pi^*(\omega)$. Let $\varphi_t$ be the corresponding
1-parameter family of nearly parallel $\G_2$-forms defined
in~\eqref{def:G2form-sasaki-eintein}.

Then, for each complex curve $\Sigma$ in $X$, the preimage
$Y_\Sigma=\pi^{-1}(\Sigma)\subset S$ is an invariant submanifold for the
contact structure $\eta$ and a (minimal) associative 3-fold with respect to
$\varphi_t$ for each~$t$.

In particular, if $S$ is $Q(1,1,1)$ or $S_k$, so $X = \CP^1\times P$,
with $P = \CP^1\times\CP^1$ or $P=P_k$ $(3\leq k\leq 8)$ a del Pezzo surface,
and $\omega = \omega_1 + \omega_P$, where $[\omega_1]$ is the (positive)
generator of $H^2(\CP^1,\bZ)$, $[\omega_P]\in H^2(P,\bZ)$
and $\Sigma = \CP^1\times\{p\}$, $p\in P$,
then the minimal associative $Y_\Sigma$ is diffeomorphic to the sphere~$S^3$.
\end{prop}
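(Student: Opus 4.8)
The plan is to prove that $Y_\Sigma = \pi^{-1}(\Sigma)$ is (i) an invariant submanifold, (ii) associative for every $\varphi_t$, and (iii) diffeomorphic to $S^3$ in the stated special cases. For the invariance statement, I would first observe that $\Sigma \subset X$ is a complex curve, so its cone $\Sigma \times \RE^+$ (equivalently, the preimage of $\Sigma$ under the associated line bundle projection) is a complex surface in the Calabi--Yau cone $S \times \RE^+$. Since $\pi$ is a principal $S^1$-bundle and $\xi$ generates the fibre action, the Reeb field $\xi$ is automatically tangent to $Y_\Sigma = \pi^{-1}(\Sigma)$. To see that the endomorphism $\Phi$ preserves $TY_\Sigma$, I would use that the horizontal lift of $T\Sigma$ together with $\xi$ spans $TY_\Sigma$, and that $\Phi$ acts on horizontal vectors as (the lift of) the complex structure on $X$; because $\Sigma$ is a complex curve, $T\Sigma$ is $J$-invariant, so its horizontal lift is $\Phi$-invariant, while $\Phi\xi = 0$. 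This gives invariance directly from the definitions recalled in Section~\ref{SE.and.3S}.

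For the associative property, the key is the Cayley--associative correspondence already set up in the preceding discussion: $Y \subset S$ is associative with respect to $\varphi_t$ precisely when the cone $Y \times \RE^+$ is Cayley in $S \times \RE^+$ for the $\Spin(7)$-form $\widehat\Phi_t$, and complex surfaces are Cayley whenever the $\Spin(7)$-structure is of the form \eqref{SU4-Spin7} induced by the $\SU(4)$-structure. So I would argue that $Y_\Sigma \times \RE^+$ is the cone on $Y_\Sigma$ and is a complex surface in $S \times \RE^+$ (this is where the invariance from the previous step is used: $J$-invariance of the tangent spaces of the cone is equivalent to $\Phi$-invariance of $TY_\Sigma$ together with tangency of $\xi$, by the cone relation $\xi = J(r\partial_r)$). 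The complex surface is then automatically Cayley for $\widehat\Phi_t$, and since $\widehat\Phi_t$ depends on $t$ only through $\re(e^{-it}\widehat\Psi)$, the calibration condition for a \emph{complex} surface (which pairs against $\tfrac12\omega^c\wedge\omega^c$, not the holomorphic form) holds simultaneously for all $t$. Restricting $\partial_r \lrcorner \widehat\Phi_t$ to $r=1$ then shows $\varphi_t|_{Y_\Sigma} = \vol_{Y_\Sigma}$ for every $t$, and minimality follows from \cite{BallM} as already quoted.

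For the diffeomorphism type in the special case, I would take $X = \CP^1 \times P$ with $\omega = \omega_1 + \omega_P$ and $\Sigma = \CP^1 \times \{p\}$. The restriction of the Euler class $c_1 = [\omega]$ to $\Sigma$ is $[\omega_1]|_{\CP^1}$, the positive generator of $H^2(\CP^1,\bZ)$, because $[\omega_P]$ pulls back to zero on the point factor. Hence $Y_\Sigma \to \Sigma = \CP^1 = S^2$ is the circle bundle over $S^2$ with Euler number $1$, which is the Hopf bundle, so $Y_\Sigma \cong S^3$. I would phrase this by pulling back the bundle along the inclusion $\Sigma \hookrightarrow X$ and computing the degree of the restricted Euler class.

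The step I expect to require the most care is verifying cleanly that the cone $Y_\Sigma \times \RE^+$ really is a \emph{complex} (not merely Cayley) surface, i.e.\ translating the contact-geometric invariance of $Y_\Sigma$ into holomorphicity of its cone. The subtlety is matching the endomorphism $\Phi$ on $S$ with the ambient complex structure $J$ on the cone through the relation $\Phi = \nabla\xi$ and $\xi = J(r\partial_r)$; one must check that $\Phi$-invariance of the horizontal part of $TY_\Sigma$ together with tangency of $\xi$ exactly accounts for $J$-invariance of the full tangent space of the cone, including the radial and Reeb directions. Once this identification is made, the remaining arguments are essentially formal consequences of the correspondence and of \eqref{SU4-Spin7}.
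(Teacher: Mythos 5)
Your proof is correct, and its skeleton is the one the paper uses: pass to the cone, show $Y_\Sigma\times\RE^+$ is a complex surface in the Calabi--Yau cone $S\times\RE^+$, use that complex surfaces are Cayley for every $\widehat\Phi_t$ of the form \eqref{SU4-Spin7} (since the $(4,0)$-form $\widehat\Psi$ restricts to zero on a complex surface, the calibration identity is $t$-independent), and descend to associativity of the link. The two places where you genuinely diverge are worth recording. First, to establish holomorphicity of the cone you verify $J$-invariance of its tangent spaces directly, identifying $\Phi$ on the horizontal distribution with the lift of $J_X$ and using $\xi=J(r\partial_r)$; the paper instead computes, from $\omega^c=r^2\pi^*\omega+2r\,dr\we\eta$ and the fact that $\pi$ is a Riemannian submersion, that $\tfrac12\,\omega^c\we\omega^c$ restricts to the volume form of $Y_\Sigma\times\RE^+$, and then invokes the equality case of the Wirtinger inequality to conclude that the cone is a complex surface. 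The paper's route sidesteps exactly the bookkeeping you flag as delicate (it never needs the identification of $\Phi$ with $J$), at the price of a pointwise norm computation; your route is more structural and makes the ``invariant submanifold'' assertion an explicit step rather than a by-product, so both are sound and each buys a little clarity in a different place. Second, for the diffeomorphism type you restrict the Euler class $[\omega]=[\omega_1]+[\omega_P]$ to $\Sigma=\CP^1\times\{p\}$, obtain degree $1$, and recognise $Y_\Sigma\to\Sigma$ as the Hopf bundle; the paper instead realises $S\times\RE^+$ inside the total space of a holomorphic line bundle over $X$, restricts to $\mathcal{O}(1)$ over $\Sigma\cong\CP^1$, and identifies the resulting total space with $\CP^2\setminus\{(0{:}0{:}1)\}$. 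Your circle-bundle computation is the more elementary of the two and fully suffices for the stated conclusion.
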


\begin{proof}
Recall from~\eqref{K.cone} that the K\"ahler form on the cone $S\times\RE^+$ is
\begin{equation}\label{CY.cone}
\omega^c=r^2d\eta + 2r\,dr\we\eta=r^2\pi^* \omega + 2r\,dr\we\eta,
\end{equation}
where $r\in\RE^+$ and we extended $\pi$ to a projection $\pi:S\times\RE^+\to
X$ independent of the $\RE^+$ factor. Pulling back to submanifolds
$Y_\Sigma\times\RE^+$ and $\Sigma\subset Y_\Sigma\times\{1\}$ we find that
$$
\frac12 (\omega^c\we\omega^c)|_{Y_\Sigma\times\RE^+} =
\pi^*(\omega|_{\Sigma\times\{1\}})\we 2\eta\we dr = \vol_{Y_\Sigma\times\RE^+},
$$
noting also that $\pi$ is a Riemannian submersion and $2\eta$ has unit length
in the metric induced by~$\varphi_t$. Thus, $Y_\Sigma\times\RE^+$ is calibrated
by $\frac12 \omega^c\we\omega^c$ and must be a complex surface in $S\times\RE^+$
by the Wirtinger inequality. The link $Y_\Sigma$ is therefore associative.

For the last part, we can consider $S\times\RE^+$ as the total space of a
holomorphic line bundle over~$X$. The restriction of this bundle to a
projective line $\Sigma$ is isomorphic to the hyperplane bundle 
$\mathcal{O}(1)$ over~$\CP^1$. The total space $Y_\Sigma\times\RE$ is then
biholomorphic to $\CP^2\setminus\{(0{:}0{:}1)\}$. We obtain that the
associative 3-fold $Y_\Sigma$ is diffeomorphic to the sphere $S^3$, noting
that the fibres of $Y_\Sigma\times\RE$ correspond to projective lines through
$(0{:}0{:}1)$, so the zero section $\Sigma$ can be identified as a projective
line in $\CP^2$ avoiding $(0{:}0{:}1)$. 
\end{proof}

\Needspace{3\baselineskip}
\begin{remarks}
\mbox{}
{\renewcommand{\labelenumi}{(\roman{enumi})}
\begin{enumerate}
\item
It is not difficult to see that every minimal associative
$Y_\Sigma$ in Proposition~\ref{invar.assoc} is invariant under the (isometric)
$S^1$-action on the principal bundle~$S$ and every deformation of the
holomorphic curve $\Sigma$ in $X$ induces an associative deformation
of~$Y_\Sigma$.

\item
We obtain from \eqref{def:G2form-sasaki-eintein} that
$\varphi_t|_{Y_\Sigma}=2\eta\we\Omega_\Sigma$, where
$\Omega_\Sigma=\pi^*(\omega|_\Sigma)$. As $\pi$ is a Riemannian submersion, the
form $\Omega_\Sigma$ at each point in $Y_\Sigma$ can be written as a wedge
product of two unit-length covectors which are orthogonal to $\eta$ in the
metric induced by~$\varphi_t$. (If $S=Q(1,1,1)$ and
$\Sigma=\CP^1\times\{(p_1,p_2)\}$, then
$\Omega_\Sigma=\alpha_1\wedge\alpha_2$ in the notation of~\eqref{1-forms}.)

\item
In the last part of Proposition~\ref{invar.assoc}, one can more generally take
$\Sigma$ to be the graph of a holomorphic embedding $\CP^1\to P$. For
$S=Q(1,1,1)$, the ambiguity of taking such~$\Sigma$ corresponds to a generic
choice of two rational functions of one complex variable. 
\end{enumerate}}
\end{remarks}

Concerning deformations of minimal associative 3-folds for nearly parallel
$\G_2$-structures of this type, we more generally have:

\begin{prop}\label{non-rigid}
Let $S$ be a regular Sasaki--Einstein $7$-manifold with contact form
$\eta$ arising from a principal $S^1$-bundle $\pi:S\to X$ and let
$\varphi_t$ be the induced $S^1$-family \eqref{def:G2form-sasaki-eintein} of
nearly parallel $\G_2$-structures on~$S$.

If $Y\subset S$ is an associative 3-fold with respect to $\varphi_{t_0}$ for
some fixed $t_0$ and $Y\neq \pi^{-1}(\Sigma)$ for any real $2$-dimensional
submanifold $\Sigma\subset X$,
then the free $S^1$-action on the principal bundle $S$ induces non-trivial
$\varphi_{t_0}$-associative deformations of $Y$ (in particular, $Y$ is
{\em not} rigid).
\end{prop}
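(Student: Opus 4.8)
The plan is to exploit a feature already visible in \eqref{def:G2form-sasaki-eintein} and in the coordinate description of Section~\ref{ex:nearlyparallel}: the free $S^1$-action on $S$ does not fix $\varphi_{t_0}$ but instead \emph{rotates} the entire canonical family. Writing $\{R_\theta\}$ for the flow of the Reeb field $\xi$ (the infinitesimal generator of the principal action), I would first compute the Lie derivatives of the three ingredients of $\varphi_t$. Since $\Omega=\pi^{*}\omega$ is basic and closed, $\mathcal{L}_\xi\Omega=0$; since $\xi\lrcorner\,\eta=1$ and $\xi\lrcorner\,d\eta=0$, also $\mathcal{L}_\xi\eta=0$; and because $\Psi$ is horizontal ($\xi\lrcorner\Psi=0$), the structure equation for the horizontal complex volume form (in the $Q(1,1,1)$ model $d\Psi=-2\pi i\,\Psi\wedge\eta$, see \eqref{relation}) gives $\mathcal{L}_\xi\Psi=\xi\lrcorner\,d\Psi=c\,i\,\Psi$ for a nonzero real constant $c$ (equal to $\tau_0$ in the model). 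Substituting into $\varphi_t=\Omega\wedge\eta+\re(e^{-it}\Psi)$ yields the single identity $R_\theta^{*}\varphi_t=\varphi_{t+c\theta}$, which drives the whole argument.

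Next I would convert this symmetry into a deformation. As every $\varphi_t$ induces the same metric $g$, the isometry $R_\theta$ carries the $\varphi_{t_0}$-associative $Y$ to the $\varphi_{t_0+c\theta}$-associative submanifold $R_\theta(Y)$, so $\theta\mapsto R_\theta(Y)$ is a smooth one-parameter family of (minimal) associative submanifolds through $Y=R_0(Y)$, each associative for the correspondingly rotated member of the family. Its infinitesimal generator is the normal component $v:=\xi^{\perp}$ of the Reeb field along $Y$. To see that $v$ is an infinitesimal associative deformation for the fixed structure, I would differentiate the identity $\chi_{\varphi_{t_0+c\theta}}\big|_{R_\theta(Y)}=0$ at $\theta=0$, where $\chi_{\varphi}$ is the $TS$-valued $3$-form dual to $\star\varphi$. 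Only $\star\varphi_t$ depends on $t$, and a short computation gives $\partial_t(\star\varphi_t)|_{t_0}=-\varphi_{t_0}\wedge\eta$. Evaluating on an orthonormal tangent frame $e_1,e_2,e_3$ of $Y$ and using that associativity forces the cross products $e_i\times e_j$ to be tangent, so the mixed terms $\varphi_{t_0}(e_i,e_j,\cdot)$ vanish on the normal bundle, the source term collapses to a multiple of $v$. The differentiated identity then exhibits $v$ through Kawai's linearized associative operator \cite{Kawai2}, identifying it as a genuine infinitesimal $\varphi_{t_0}$-associative deformation of $Y$.

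Finally I would check non-triviality. The field $v=\xi^{\perp}$ vanishes identically precisely when $\xi$ is everywhere tangent to $Y$, that is, when $Y$ is a union of Reeb orbits; since each orbit is a fibre of $\pi$, this occurs if and only if $Y=\pi^{-1}(\pi(Y))$ with $\pi(Y)$ a real $2$-dimensional submanifold of $X$. The hypothesis that $Y\neq\pi^{-1}(\Sigma)$ for every such $\Sigma$ therefore guarantees $v\not\equiv0$, so the induced deformation is non-trivial and $Y$ is not rigid.

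The step I expect to be the main obstacle is the middle one: pinning down the constant produced by the differentiation and matching it to the precise zeroth-order (torsion) term of Kawai's deformation operator for nearly parallel $\G_2$-manifolds, so as to be sure that $v$ lies in the kernel of the \emph{associative} deformation operator at $\varphi_{t_0}$, rather than merely being a Jacobi field for the minimal-submanifold equation (which follows cheaply from $R_\theta$ being an isometry). This eigenvalue bookkeeping, with the relevant eigenvalue governed by $\tau_0$, is exactly where the rotation identity $R_\theta^{*}\varphi_t=\varphi_{t+c\theta}$ is essential, and it is what cleanly separates the excluded case $Y=\pi^{-1}(\Sigma)$ (where $\Psi|_Y=0$ and $Y$ is associative for all $t$) from the generic one.
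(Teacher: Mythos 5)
Your first and third steps match the paper's: the Reeb flow rotates the canonical family, $R_\theta^{*}\varphi_t=\varphi_{t+c\theta}$, and the hypothesis $Y\neq\pi^{-1}(\Sigma)$ enters exactly as you use it, to rule out $\xi$ being everywhere tangent to $Y$, so that the orbit $\theta\mapsto R_\theta(Y)$ genuinely moves $Y$. The gap is the middle step, and you have in fact located it yourself. Differentiating $\chi_{\varphi_{t_0+c\theta}}\big|_{R_\theta(Y)}=0$ does not place $v=\xi^{\perp}$ in the kernel of the linearized $\varphi_{t_0}$-associative operator: by your own computation the $t$-derivative contributes $-(\varphi_{t_0}\wedge\eta)(e_1,e_2,e_3,\cdot)=-\eta|_{NY}=-g(v,\cdot)$, i.e.\ the ``source term'' is the metric dual of $v$ itself, not zero. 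So what you obtain is $v\in\ker(L-c')$ for a non-zero constant $c'$, rather than $v\in\ker L$, and whether that eigenspace coincides with Kawai's space of infinitesimal associative deformations for a nearly parallel structure is precisely the bookkeeping you defer; without it the argument does not close. Moreover, even if it did close, you would only have produced an \emph{infinitesimal} deformation, whereas the proposition asserts (and the paper proves) that the actual submanifolds $R_\theta(Y)$ are $\varphi_{t_0}$-associative.

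The paper's proof avoids all of this with a soft calibration argument. Since each $R_\theta$ is an isometry for the common Sasaki--Einstein metric and carries $\varphi_{t_0}$ to another member of the family, each $R_\theta(Y)$ is associative for some $\varphi_{t_\theta}$, hence volume-minimizing, homologous to $Y$ and of the same volume as $Y$. The equality case of the fundamental theorem of calibrated geometry, \cite[Theorem~II.4.2]{HarveyLawson} (applied through the closed Cayley $4$-form on the cone), then forces every $R_\theta(Y)$ to be calibrated by the same form that calibrates $Y$, i.e.\ to be $\varphi_{t_0}$-associative; no linear analysis or identification with Kawai's operator is needed. If you want to complete your route, this equality-case argument is the missing ingredient; conversely, your explicit Lie-derivative computation is more than is required, since only the qualitative fact that the $S^1$-action preserves the family $\{\varphi_t\}$ and the metric is used.
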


\begin{proof}
The free $S^1$-action on~$S$ preserves the family $\varphi_t$ and all
$\varphi_t$ induce the same (Sasaki--Einstein) metric on~$S$. Therefore, the
$S^1$-orbit of $Y$ consists of volume minimizing submanifolds of~$S$ which are
in general distinct from~$Y$ by the hypothesis.

The result now follows by application of~\cite[Theorem~II.4.2]{HarveyLawson}.
\end{proof}

We now turn to consider associative 3-folds arising as links of Cayley
submanifolds $Z=Y\times\RE^+$ which are a special Lagrangian in $S\times\RE^+$,
so $\omega^c|_Z=0$ and $\re (e^{-it}\widehat\Psi)|_Z=0$ for some
fixed~$t$. Then (and only then) the cross-section $Y$ is, by definition,
a {\em special Legendrian} submanifold of the Sasaki--Einstein manifold~$S$.
Equivalently, $\eta|_Y=0$ and $\re \Psi_t|_Y=0$,
where $\Psi_t=\partial_r\lrcorner(e^{-it}\widehat\Psi)|_{r=1}$ is the
horizontal volume form, cf.~\cite[Prop.~3.3]{M2015}. (As before, $\eta$ is
the contact form on~$S$.) Since $Y$ is associative with respect to the
$\G_2$-structure $\varphi_t$, $Y$ is a minimal submanifold for the
Sasaki--Einstein metric on~$S$. Conversely, it is known that an oriented
Legendrian submanifold $Y\subset S$ is minimal if and only if $Y$ is special
Legendrian \cite[Prop.~3.2]{M2015}.

The following result will be useful for producing examples of associative
3-folds.

\begin{proposition}[{\cite[Prop.~3.4]{M2015}}]\label{involution}
Let $S$ be a Sasaki--Einstein $(2n+1)$-manifold and let
$\tau_S:S\times\RE^*\to S\times\RE^*$ be an anti-holomorphic involution
for the complex structure of the K\"ahler--Einstein cone $S\times\RE^*$
fixing the coordinate $r\in\RE^+$. If the fixed point set
$C_\tau$ of $\tau$ is not empty, then the link $C_\tau\cap (S\times\{1\})$
is a special Legendrian submanifold of $S$.
\end{proposition}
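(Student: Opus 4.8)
The plan is to reduce the statement to the classical fact that the fixed-point set of an anti-holomorphic isometric involution of a Calabi--Yau manifold is special Lagrangian, and then to invoke the correspondence between special Lagrangian cones and special Legendrian links recalled just before the proposition. Write $g^c$, $J$, $\omega^c$ and $\widehat\Psi$ for the Ricci-flat K\"ahler metric, the complex structure, the K\"ahler form and the parallel holomorphic $(n+1,0)$-form on the cone $S\times\RE^+$. As is standard for a real structure, I take $\tau_S$ to be an \emph{isometry} (equivalently, anti-symplectic) as well as anti-holomorphic; this metric compatibility will be used twice below and is the only point requiring care.

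First I would show that $C_\tau$ is a metric cone $Y\times\RE^+$ over its link $Y=C_\tau\cap(S\times\{1\})$. Recall that the fixed-point set of an isometric involution is a smooth (totally geodesic) submanifold, so $C_\tau$ is smooth. Since $\tau_S$ fixes the coordinate $r$ we have $\tau_S^*dr=d(r\circ\tau_S)=dr$, and as $\tau_S$ is an isometry it therefore fixes the $g^c$-dual vector field $\nabla r=\partial_r$ at every point of $C_\tau$. Hence $\partial_r$ is tangent to $C_\tau$, so $C_\tau$ is a union of integral curves of $\partial_r$, that is, of radial rays $\{p\}\times\RE^+$; this gives $C_\tau=Y\times\RE^+$.

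Next I would verify that $C_\tau$ is special Lagrangian. At a fixed point $x$ the differential $d\tau_S$ is an orthogonal involution, and the anti-holomorphicity $d\tau_S\circ J=-J\circ d\tau_S$ shows that $J$ carries its $(+1)$-eigenspace isomorphically onto its $(-1)$-eigenspace; hence each has dimension $n+1$ and $\dim C_\tau=n+1$. Isometry together with anti-holomorphicity gives $\tau_S^*\omega^c=-\omega^c$, so for $v,w\in T_xC_\tau$ one has $\omega^c(v,w)=\omega^c(d\tau_Sv,d\tau_Sw)=(\tau_S^*\omega^c)(v,w)=-\omega^c(v,w)$, whence $\omega^c|_{C_\tau}=0$ and $C_\tau$ is Lagrangian. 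For the phase, anti-holomorphicity forces $\tau_S^*\widehat\Psi$ to be of type $(0,n+1)$, so $\tau_S^*\widehat\Psi=f\,\overline{\widehat\Psi}$; since $\widehat\Psi$ is parallel and $\tau_S$ is an isometry, $\tau_S^*\widehat\Psi$ is parallel too, which forces $f=e^{i\theta}$ to be a constant. Evaluating on a frame of $T_xC_\tau$ and using $d\tau_Sv_i=v_i$ yields $\widehat\Psi|_{C_\tau}=e^{i\theta}\,\overline{\widehat\Psi|_{C_\tau}}$, so $e^{-i\theta/2}\widehat\Psi$ restricts to a real form and $\re(e^{-it}\widehat\Psi)|_{C_\tau}=0$ for $t=\tfrac{1}{2}\theta+\tfrac{\pi}{2}$.

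Finally, by the equivalence recalled before Proposition~\ref{involution}, a submanifold $Y\subset S$ is special Legendrian exactly when its cone $Y\times\RE^+$ is special Lagrangian in $S\times\RE^+$; applied to $C_\tau=Y\times\RE^+$ this shows that $Y=C_\tau\cap(S\times\{1\})$ is special Legendrian, as claimed (and a dimension count gives $\dim Y=n$, the Legendrian dimension in the $(2n+1)$-manifold $S$). The one delicate point is the isometry property of $\tau_S$: it is what produces both $\tau_S^*\omega^c=-\omega^c$, used for the Lagrangian condition, and the constancy of the phase $\theta$, used for the special condition. Once this is granted, every remaining step is formal.
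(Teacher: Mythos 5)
Your proof is correct, but note that the paper itself offers no argument to compare against: Proposition~\ref{involution} is quoted verbatim from \cite[Prop.~3.4]{M2015} and used as a black box. Your reconstruction is the standard one --- and essentially the one in the cited reference --- namely that the fixed locus of an anti-holomorphic isometric involution of the Calabi--Yau cone is special Lagrangian (anti-invariance of $\omega^c$ gives the Lagrangian condition, and $\tau_S^*\widehat\Psi=e^{i\theta}\overline{\widehat\Psi}$ with $\theta$ constant by parallelism pins down the phase), combined with the cone/link dictionary stated in the paper just before the proposition. The one point you rightly flag is that the statement as printed only assumes $\tau_S$ is anti-holomorphic and fixes $r$, whereas your argument uses $\tau_S^*g^c=g^c$ in three places: smoothness and conicality of $C_\tau$ (via $d\tau_S\partial_r=\partial_r$), the identity $\tau_S^*\omega^c=-\omega^c$, and the constancy of the phase. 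This added hypothesis is harmless and is exactly what holds in the paper's only application, Theorem~\ref{Legendrian}, where the lifted involution is explicitly shown to preserve the K\"ahler--Einstein cone metric; it is also part of the hypotheses in the source \cite{M2015}. A very minor remark: your choice $t=\tfrac{\theta}{2}+\tfrac{\pi}{2}$ and the alternative $t=\tfrac{\theta}{2}-\tfrac{\pi}{2}$ both make $e^{-it}\widehat\Psi$ purely imaginary on $C_\tau$, so either serves; and your eigenspace argument correctly forces every component of $C_\tau$ to have real dimension $n+1$, so no component is discarded.
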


In the case of a regular Sasaki--Einstein manifold $S$ we have the following.

\begin{theorem}\label{Legendrian}
Let $S$ be a regular Sasaki--Einstein $7$-manifold with contact form
$\eta$ arising from a principal $S^1$-bundle $\pi:S\to X$ with Euler class
$c_1=[\omega]$, where $X$ is a K\"ahler--Einstein Fano 3-fold
with K\"ahler form $\omega$ and $d\eta=\pi^*(\omega)$. Let $\varphi_t$ be the
corresponding $1$-parameter family of nearly parallel $\G_2$-forms defined
in~\eqref{def:G2form-sasaki-eintein}.

Then for each compact special Legendrian submanifold $Y\subset S$, the
restriction \mbox{$\pi|_Y:Y\to Y_X$} is a finite covering of a Lagrangian
submanifold \mbox{$Y_X\subset X$}.

If $Y_X\subset X$ is a compact simply-connected Lagrangian submanifold,
thus a Lagrangian 3-sphere, then $Y_X$ lifts to an $S^1$-family of
Legendrian submanifolds $Y_s \subset S$ such that $\pi(Y_s)=Y_X$ for each
$s\in S^1$.

Assume that $\tau:X\to X$ is an isometric anti-holomorphic involution.
If the fixed point set $Y_X\subset X$ of $\tau$ is non-empty,
then $Y_X$ is Lagrangian and diffeomorphically lifts to a special Legendrian
(hence associative) submanifold of~$S$.
\end{theorem}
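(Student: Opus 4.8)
The plan is to prove the three assertions in turn. For the first, I would only use that a special Legendrian $Y$ is a fortiori Legendrian, so $\eta|_Y=0$ and $T_yY\subset\ker\eta_y$ for every $y$. Since the Reeb field $\xi$ satisfies $\eta(\xi)=1$, it is everywhere transverse to $Y$, whence $d\pi$ is injective on each $T_yY$; as $\dim Y=3=\tfrac12\dim_{\R}X$, the map $\pi|_Y$ is a local diffeomorphism onto the $3$-dimensional immersed image $Y_X=\pi(Y)$, and it is proper because $Y$ is compact, hence a finite covering onto $Y_X$. Lagrangianity of $Y_X$ is then immediate: from $\eta|_Y=0$ we get $0=d(\eta|_Y)=(\pi^*\omega)|_Y$, and since $\pi|_Y$ is a local diffeomorphism this forces $\omega|_{Y_X}=0$, so $Y_X$ is isotropic of half-dimension, i.e.\ Lagrangian.

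For the second assertion I would restrict the principal bundle to $Y_X$. The connection form $\eta$ then has curvature $d\eta|_{\pi^{-1}(Y_X)}=\pi^*(\omega|_{Y_X})=0$, so $\pi^{-1}(Y_X)\to Y_X$ is flat. As $Y_X$ is compact and simply connected (hence a Lagrangian $3$-sphere), the holonomy is trivial, so $\ker\eta$ is an integrable distribution on $\pi^{-1}(Y_X)$ whose leaves are closed horizontal sections; each leaf $Y_s$ satisfies $\eta|_{Y_s}=0$ and is mapped diffeomorphically onto $Y_X$ by $\pi$. The free $S^1$-action permutes these leaves and so produces the claimed $S^1$-family $\{Y_s\}$ of Legendrian lifts with $\pi(Y_s)=Y_X$.

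For the third assertion, note first that an isometric anti-holomorphic involution satisfies $\tau^*\omega=-\omega$, so $\omega$ vanishes on the fixed set $Y_X=\mathrm{Fix}(\tau)$; since $Y_X$ has real dimension $\tfrac12\dim_{\R}X$ it is a (totally geodesic) Lagrangian. The plan is then to lift $\tau$ to an anti-holomorphic involution $\tau_S$ of the Calabi--Yau cone $S\times\R^+$ that fixes the radial coordinate $r$, and to invoke Proposition~\ref{involution}, which guarantees that the link $\mathrm{Fix}(\tau_S)\cap(S\times\{1\})$ is a special Legendrian -- hence, as recalled above, associative -- submanifold of $S$. Realising $S\times\R^+$ as the complement of the zero section in a negative holomorphic line bundle $\mathcal L\to X$ (so that $r=|\cdot|$ and the $4$-forms \eqref{SU4-Spin7} are the Calabi--Yau data), such a $\tau_S$ is produced by equipping $\mathcal L$ with an anti-linear real structure covering $\tau$; this is where the cohomological input $\tau^*\omega=-\omega$ enters, and unitarity of the structure makes $\tau_S$ preserve $r$.

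It remains to see that the resulting special Legendrian projects \emph{diffeomorphically} onto $Y_X$. Over each $x\in Y_X$ the involution $\tau_S$ restricts to an anti-linear involution of the fibre $\mathcal L_x\cong\CX$, whose fixed set is a real line; these assemble into a real line subbundle $\mathcal L^{\R}\to Y_X$, and its unit vectors give exactly the two points of $\mathrm{Fix}(\tau_S)$ over $x$. A global unit section of $\mathcal L^{\R}$ -- which exists as soon as $\mathcal L^{\R}$ is orientable, in particular whenever $Y_X$ is simply connected -- then cuts out a connected component of the special Legendrian mapping diffeomorphically to $Y_X$. I expect the main obstacle to lie in this third part: both the construction of the anti-holomorphic lift $\tau_S$ compatible with the cone's complex structure and the global selection of the fixed section (equivalently, the orientability of $\mathcal L^{\R}$, which is what distinguishes a genuine diffeomorphic lift from the degree-two covering allowed by the first assertion) require care, whereas the first two assertions are formal consequences of transversality of the Reeb field and flatness of $\eta$ along a Lagrangian.
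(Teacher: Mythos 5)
Your argument follows the paper's proof essentially step for step: part one via transversality of the Reeb field to a Legendrian plus $d(\eta|_Y)=(\pi^*\omega)|_Y=0$; part two via flatness of $\eta$ over the Lagrangian and triviality of the holonomy from simple connectedness; part three via a lift of $\tau$ to an anti-holomorphic involution of the line bundle whose complement of the zero section is the cone $S\times\RE^+$, followed by Proposition~\ref{involution}. The first two parts are complete and match the paper.

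The one substantive issue is exactly where you predicted it: the final ``diffeomorphically lifts'' claim. You establish orientability of the real line subbundle $\mathcal L^{\RE}\to Y_X$ only when $Y_X$ is simply connected, but the third assertion of the theorem makes no such hypothesis, and the paper's own application (Proposition~\ref{assoc.torus}) takes $Y_X$ to be a $3$-torus, where your justification does not apply. So as written your proof of the third part only yields that the special Legendrian is a (possibly connected) double cover of $Y_X$ in the general case. The paper deals with this by asserting outright that the fixed point set of the lifted involution is an \emph{oriented} (hence trivial) real line bundle over $Y_X$, without further argument. To actually close the gap one should identify $\mathcal L^{\RE}$: since the cone is the complement of the zero section in (a root of) $K_X^{-1}=\Lambda^3 T^{1,0}X$, and over $Y_X$ the projection $TY_X\to T^{1,0}X|_{Y_X}$ identifies $TY_X$ with the real points of the induced anti-linear involution, one gets $\mathcal L^{\RE}\cong\Lambda^3 TY_X=\det TY_X$; hence triviality of $\mathcal L^{\RE}$ is equivalent to orientability of $Y_X$. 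That holds in every case the paper uses (tori), and supplying this identification, or at least an orientability hypothesis on $Y_X$, is what your part three needs to be complete.
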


\begin{proof}
If a submanifold $Y$ is Legendrian, i.e.\ $\eta|_Y=0$, then $Y$ cannot be
tangent to any fibre in the principal circle bundle~$S$ and $\pi$ maps $Y$
locally diffeomorphically onto the image $\pi(Y)=Y_X$ which is a submanifold
of~$X$. Since $Y$, hence also $Y_X$, are compact we obtain that $\pi|_Y$ is a
finite cover. Considering \eqref{CY.cone} we find that $Y_X$ is Lagrangian
submanifold of $X$, $\omega|_{Y_X}=0$.

For the second claim, since $c_1=[\omega]$ vanishes on $Y_X$ and
$d\eta=\pi^*\omega$, the connection $\eta$ on the $S^1$-bundle $S$ restricts to
a flat connection over~$Y_X$. Furthermore, $\eta|_{Y_X}$ must be a trivial
product connection as $Y_X$ is simply connected. So
$\pi^{-1}(Y_X)\cong Y_X\times S^1$ with $\eta|_{Y_X}$ corresponding to $ds$,
$s\in S^1$, and $Y_s=Y_X\times\{s\}$ for each $s$, is a Legendrian submanifold
of~$S$.

The principal bundle $S$ is associated to the anticanonical bundle $K^{-1}_X$
and $S\times\RE^*$ is biholomorphic to the complement of the zero section.
The hypotheses on $\tau$ imply that $\tau^*\omega_X=-\omega_X$ and
$\tau^*\overline{K^{-1}_X}$ is a holomorphic line bundle isomorphic to
$K^{-1}_X$. We find that $\tau^*$ defines a lift of $\tau$ to an
antiholomorphic involution on~$K^{-1}_X$ and (by restriction) on
$S\times\RE^+$ with $\tau^*\eta=-\eta$ and $\tau\circ r=r$. Thus
$\tau^*$ preserves the K\"ahler-Einstein metric on $S\times\RE^+$ and the
fixed point set of $\tau^*$ is an oriented (hence trivial) real line bundle 
over~$Y_X$. The desired special Legendrian is obtained as the
intersection with $S=S\times\{1\}$ by application of
Proposition~\ref{involution}.
\end{proof}

If the Fano $3$-fold $X$ also appears as a smooth fibre in a Lefschetz fibration
$\lambda:E\to\Delta$ over a disc with $0\in\Delta$ the only critical value,
then the vanishing cycles in $X$ (the cycles that degenerate to a point as $X$
is deformed to a fibre over $0$) are represented by Lagrangian
$3$-spheres (see~\cite{arnold} or \cite[\S~4]{seidel}).

In the case when the nearly parallel $\G_2$-manifold $S$ is
$Q(1,1,1)$ or $S_3$ (see Theorem~\ref{th:pezzo-nonformal-sasa-einst}) we can
say more.

\begin{prop}\label{SL.torus}
Let $\pi_M:M=Q(1,1,1)\to X=S^2\times S^2\times S^2$ be
the principal
$S^1$-bundle \eqref{Q.bundle} and $\varphi_t$ the $1$-dimensional family of
nearly parallel $\G_2$-structures \eqref{def:G2-M} on~$M$. Let $L\subset X$ be
a 3-torus defined by $\theta_j=\pi/2$, $j=1,2,3$, in the spherical coordinates
$\phi_j,\theta_j$ on $X$ (see Section~\ref{ex:nearlyparallel}).

Then $L$ lifts via $\pi_M$ to a family of minimal Legendrian 3-tori
$L_s\subset M$, $s\in\RE/ 2\pi\bZ $. For each $s$, the 3-torus $L_s$ is
associative with respect to $\varphi_{t}$ for all~$t$.
\end{prop}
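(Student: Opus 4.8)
The plan is to prove all three assertions by direct computation in the spherical coordinates of Section~\ref{ex:nearlyparallel}. The organising observation is that on $L$ the condition $\theta_j\equiv\pi/2$ makes $d\theta_j$ vanish, so by \eqref{1-forms} the ``vertical'' coframe forms $\alpha_1,\alpha_3,\alpha_5$ restrict to zero on any lift of $L$, while $\alpha_2,\alpha_4,\alpha_6$ survive and are proportional to $d\phi_1,d\phi_2,d\phi_3$. Everything then reduces to restriction computations for the forms $\omega$, $\eta$ and $\Psi$ already written down in Section~\ref{ex:nearlyparallel}.

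First I would check that $L$ is Lagrangian and lift it. From \eqref{omega} and $\alpha_{2j-1}|_L=0$ we get $\omega|_L=0$, and since $\dim_\RE L=3=\tfrac12\dim_\RE X$, the $3$-torus $L$ is Lagrangian in $X$. By \eqref{relation1} the curvature $d\eta=2\pi_M^*\omega$ vanishes over $L$, so the connection $\eta$ is flat on $\pi_M^{-1}(L)$; moreover $\cos\theta_j=0$ on $L$ reduces \eqref{eta} to $\eta=-\tfrac1{2\pi}\,ds$ there. Hence the Legendrian equation $\eta|_Y=0$ is solved precisely by the flat (constant-$s$) sections, giving the $S^1$-family $L_s=\{\theta_j=\pi/2,\ s=\mathrm{const}\}$ of embedded $3$-tori. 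I would confirm that these local sections patch to globally well-defined submanifolds of $M$ using the second-chart patching argument already invoked in Section~\ref{ex:nearlyparallel} to extend $\eta$ and $\widehat\Psi$ across $\{z_j=0\}$.

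For minimality I would restrict the horizontal volume form \eqref{horizontal.3}: since $\alpha_{2j-1}|_{L_s}=0$, one finds $(\alpha_1+i\alpha_2)\wedge(\alpha_3+i\alpha_4)\wedge(\alpha_5+i\alpha_6)|_{L_s}=-i\,\alpha_2\wedge\alpha_4\wedge\alpha_6$, so that $\Psi|_{L_s}=-i\,e^{-is}\,\alpha_2\wedge\alpha_4\wedge\alpha_6$ is a nowhere-zero multiple of the induced volume form of constant phase. Thus each $L_s$ is special Legendrian, hence minimal for the Sasaki--Einstein metric by \cite[Prop.~3.2]{M2015}; note this conclusion is independent of $t$, as it must be since the metric $g_{\varphi_t}$ does not depend on $t$.

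Finally, associativity. Restricting \eqref{def:G2-M}, the term $\tfrac\pi2\,\omega\wedge\eta$ vanishes because $\eta|_{L_s}=0$, while of all the monomials making up $\Psi_+$ and $\Psi_-$ only $\alpha_2\wedge\alpha_4\wedge\alpha_6$ survives the pullback to $L_s$; consequently $\varphi_t|_{L_s}=\re(e^{-it}\Psi)|_{L_s}$ is a scalar multiple of the induced volume form of $L_s$. The substance of the argument is then to evaluate this scalar from $\Psi|_{L_s}=-i\,e^{-is}\,\alpha_2\wedge\alpha_4\wedge\alpha_6$ and to compare it against the $\G_2$-calibration bound $\varphi_t|_{L_s}\le\vol_{L_s}$: this shows that, with $L_s$ carrying the appropriate orientation, the bound is attained and $L_s$ is $\varphi_t$-associative (and therefore minimal, in agreement with \cite{BallM} and the previous step). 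I expect the phase-and-orientation bookkeeping here to be the only delicate point — one must keep track of the constant factor $-i\,e^{-is}$ alongside $e^{-it}$ to identify, for each member $L_s$ of the family, the orientation and phase for which the calibration is saturated; everything else reduces to the elementary restriction computations set up above, and the $S^1$-family $\{L_s\}$ is thereby realised as a family of minimal associative tori.
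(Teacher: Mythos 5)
Your treatment of the first two assertions is sound and coincides with the paper's: $\omega|_L=0$ follows from \eqref{omega} because $d\theta_j|_L=0$, the circle bundle is flat and trivial over $L$ since $\cos\theta_j=0$ reduces \eqref{eta} to $\eta=-\tfrac1{2\pi}ds$, and the horizontal lifts $L_s$ are special, hence minimal, Legendrian tori. The problem is the final step. Carry out the scalar evaluation you defer: from $\Psi|_{L_s}=-i\,e^{-is}\,\alpha_2\wedge\alpha_4\wedge\alpha_6$ and \eqref{def:G2-M} one gets
$$
\varphi_t|_{L_s}=\re\bigl(e^{-it}\Psi\bigr)\big|_{L_s}=-\sin(s+t)\,\alpha_2\wedge\alpha_4\wedge\alpha_6,
$$
and since $\alpha_2,\alpha_4,\alpha_6$ restrict to an orthonormal coframe of $L_s$, this equals $\pm\vol_{L_s}$ only when $s+t\equiv\pi/2\pmod{\pi}$. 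For all other $t$ the comass bound is \emph{strict}, so no amount of ``phase-and-orientation bookkeeping'' can make the bound attained: the modulus of the scalar is $|\sin(s+t)|<1$. Your claim that the calibration inequality is saturated is therefore false for generic $(s,t)$, and this is exactly the part of the statement that cannot be obtained by pointwise restriction.

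The paper's proof is structured to avoid this. The restriction computation is performed only at $s+t=\pi/2$, where indeed $\varphi_t|_{L_s}=\Psi_-|_{L_s}=-\alpha_2\wedge\alpha_4\wedge\alpha_6$ is the volume form for the appropriate orientation; the extension to all $t$ is then deduced from Proposition~\ref{non-rigid}, i.e.\ from the fact that the free isometric $S^1$-action carries the $\varphi_t$-associative torus $L_{\pi/2-t}$ onto every other $L_s$ while preserving volume, combined with the volume-comparison theorem \cite[Theorem~II.4.2]{HarveyLawson} for calibrated submanifolds (an argument which, to be implemented carefully, must pass to the closed Cayley $4$-form on the cone, since $\varphi_t$ itself is not closed and the naive homology argument does not apply to it). That global step is entirely absent from your proposal, and without some substitute for it the third assertion remains unproved --- indeed your own formula for $\Psi|_{L_s}$, pushed one line further, appears to contradict it pointwise.
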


As $X$ and hence $Q(1,1,1)$ are toric manifolds, the
existence of a compact special Legendrian submanifold in $Q(1,1,1)$ follows
from the main result in~\cite{M2015}. However, the explicit examples of
special Legendrians in $Q(1,1,1)$ are not considered in~\cite{M2015}.
\begin{proof}[Proof of Proposition~\ref{SL.torus}]
It follows at once from the expression~\eqref{omega} for the K\"ahler form
that the 3-torus $L$ is Lagrangian in~$X$. We know from~\eqref{relation1} that
$\omega$ is proportional to the curvature of the principal $S^1$-bundle $M$,
thus the restriction of this bundle to $L$ has first Chern class zero and is a
trivial bundle. Furthermore, with $\theta_j=\pi/2$ the $1$-form $\eta$
in~\eqref{eta} defines a trivial product connection on $\pi_M^{-1}(L)$.
Thus $L$ lifts via $\pi$ to a family of horizontal 3-tori $L_s\subset M$,
parametrised by $s\in S^1$.

When $s+t=\pi/2$, from~\eqref{def:G2-M} and \eqref{horizontal.3} we obtain 
that $\varphi_t|_{L_s} = \Psi_-|_{L_s} = -\alpha_2\we\alpha_4\we\alpha_6$,
whence $L_s$ (with appropriate orientation) is $\varphi_t$-associative and
minimal Legendrian. Then $L_s$ is $\varphi_t$-associative, for all $t$,
by application of Proposition~\ref{non-rigid}.
\end{proof}

The Fano $3$-fold $X_3=P_3\times\CP^1$ is toric because $P_3$ is so. Recall that
the del Pezzo surface is the blow-up $P_3=\CP^2\#\, 3\overline{\CP^2}$ and is,
up to isomorphism, independent of the choice of three non-colinear
points in $\CP^2$. Also, $P_3$ admits a K\"ahler--Einstein metric (\cite{siu}
or \cite{TY}) which can be taken to be invariant under the torus action.
(The induced nearly parallel $\G_2$-structure $\varphi_t$
of \eqref{def:G2form-sasaki-eintein} on $S_3$ therefore is also torus-invariant
for all~$t$.)

We construct a special Legendrian associative $3$-fold in $S_3$
as a fixed point set of an anti-holomorphic involution. The existence of 
this special Legendrian is of course again a special case of the main result
in~\cite{M2015}, but is rather simpler than in a general case.

The complex surface $P_3$ can be identified as the simultaneous solution of
two complex bilinear equations
\begin{equation}\label{cp2.model}
\{(z,w)\in\CP^2\times\CP^2 : z_0 w_0 = z_1 w_1 = z_2 w_2 \}
\end{equation}
and the blow-up $P_3\to\CP^2$ is the restriction of the first projection.
The effective torus action on $P_3$ is given by
$(z_0,z_1,z_2,w_0,w_1,w_2)\mapsto
(\xi_0 z_0,\xi_1 z_1,\xi_2 z_2,\xi_0^{-1} w_0,\xi_1^{-1} w_1,\xi_2^{-1} w_2)$,
where $\xi_i\in\CX^*$, $\xi_0\xi_1\xi_2=1$, and this can be interpreted as an
embedding of $(\CX^*)^2$ in $P_3$ as an open dense subset. We deduce that
in the notation of~\eqref{cp2.model} the map
$\tau_3(z_i,w_i)=(\bar{w}_i,\bar{z}_i)$ induces an antiholomorphic isometry of
the K\"ahler--Einstein~$P_3$.
The fixed point set of this involution is diffeomorphic to the
$2$-torus~\cite{C.T.C.Wall}.

Extending to an antiholomorphic involution of the Fano 3-fold~$X_3$ with the
complex conjugation on the $\CP^1$-factor, we obtain, by application of
Theorem~\ref{Legendrian}.

\begin{proposition}\label{assoc.torus}
There exists an associative $3$-torus in the nearly parallel
$\G_2$-manifold $(S_3,\varphi_t)$.
\end{proposition}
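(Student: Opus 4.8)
The plan is to produce the required $3$-torus as the lift, via Theorem~\ref{Legendrian}, of the fixed point set of a suitable involution on the Fano $3$-fold $X_3 = P_3\times\CP^1$. Concretely, I would take $\tau = \tau_3\times c$, where $\tau_3(z_i,w_i)=(\bar w_i,\bar z_i)$ is the anti-holomorphic isometry of the K\"ahler--Einstein surface $P_3$ recalled before~\eqref{cp2.model}, and $c$ is the standard complex conjugation on the $\CP^1$-factor. Since $S_3$ is a regular Sasaki--Einstein $7$-manifold arising as the circle bundle over the K\"ahler--Einstein Fano $3$-fold $X_3$ with Euler class equal to the (integral) K\"ahler class, the framework of Theorem~\ref{Legendrian} applies as soon as $\tau$ is seen to be an isometric anti-holomorphic involution with non-empty fixed point set.

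First I would check these properties of $\tau$. Each factor is an anti-holomorphic involution, so $\tau^2=\id$ and $\tau$ is anti-holomorphic; and $\tau$ is an isometry of the product K\"ahler--Einstein metric on $X_3$ because $\tau_3$ is an isometry of the torus-invariant K\"ahler--Einstein metric on $P_3$ and $c$ is an isometry of the Fubini--Study metric on $\CP^1$. Consequently $\tau^*\omega=-\omega$ for the K\"ahler form, as required in the proof of Theorem~\ref{Legendrian}. The fixed point set splits as a product $\mathrm{Fix}(\tau)=\mathrm{Fix}(\tau_3)\times\mathrm{Fix}(c)$: the first factor is the $2$-torus identified in~\cite{C.T.C.Wall}, and the second is the great circle $\mathbb{RP}^1\cong S^1$ fixed by complex conjugation on $\CP^1$. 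Hence $Y_{X_3}:=\mathrm{Fix}(\tau)\cong T^2\times S^1=T^3$ is a non-empty compact embedded $3$-manifold.

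With the hypotheses in place, Theorem~\ref{Legendrian} gives at once that $Y_{X_3}$ is Lagrangian in $X_3$ and lifts diffeomorphically to a special Legendrian, hence associative, submanifold $Y\subset S_3$; being diffeomorphic to $Y_{X_3}\cong T^3$, this $Y$ is the desired associative $3$-torus for the nearly parallel $\G_2$-structure $\varphi_t$. I expect the only delicate point to be the isometry property of $\tau$: it depends on the K\"ahler--Einstein metric of $P_3$ being chosen invariant under both the torus action and $\tau_3$, which is precisely what was arranged in recalling that $\tau_3$ is an anti-holomorphic isometry of the torus-invariant K\"ahler--Einstein $P_3$, so no additional work is needed. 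A minor secondary point is to confirm, as in the proof of Theorem~\ref{Legendrian} (and as with $L_s$ in Proposition~\ref{SL.torus}), that the lifted fixed-point set is diffeomorphic to $Y_{X_3}$ rather than a multiple cover; this follows from the orientability, hence triviality, of the relevant real line bundle over $Y_{X_3}$.
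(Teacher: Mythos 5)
Your proposal is correct and follows essentially the same route as the paper: the authors likewise take the involution $\tau_3(z_i,w_i)=(\bar w_i,\bar z_i)$ on the bilinear model of $P_3$, whose fixed point set is a $2$-torus, extend it by complex conjugation on the $\CP^1$-factor to an antiholomorphic isometric involution of $X_3$, and apply Theorem~\ref{Legendrian} to lift the fixed $3$-torus to a special Legendrian, hence associative, submanifold of $S_3$. Your additional remarks on the isometry of $\tau$ and the triviality of the lift are consistent with what Theorem~\ref{Legendrian} already provides.
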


\section{Associative $3$-folds in the Aloff--Wallach spaces} \label{assAW}

We now revisit the Aloff--Wallach spaces $W_{k,l}$, discussed in  Subsection~\ref{AW},
and find examples of associative submanifolds in $W_{k,l}$. Recall that $k, l$
are non-zero, co-prime integers. We begin by briefly recalling
from~\cite{BallOl, CMS} the construction of proper nearly parallel
$\G_2$-structures on $W_{k,l}$.

Let $\{e_i\}_{i=0,\ldots,7}$ be a basis of left-invariant vector fields on
$\SU(3)$ such that $e_0$ is everywhere tangent to the orbits of the
$S^1_{k,l}$ action. We shall interchangeably think of $e_i$ as elements of
the Lie algebra $\su(3)$ with $e_0\in\frs^1_{k,l}$.
Choose $\{e_i\}$ so that writing $\{e^i\}$ for the dual basis of $\su(3)^*$
(or the left-invariant $1$-forms) the Maurer--Cartan form
$\Omega = \sum_i e_i e^i$ on $\SU(3)$ with values in $\mathfrak{su}(3)$ is
expressed as
\begin{equation}\label{MC}
\Omega = \frac1{\sqrt 2}
\begin{pmatrix}
i (\frac{k}{\sqrt{3}\, s} e^0 + \frac{l-m}{3s} e^4) & e^1 + ie^5 & -e^3 + ie^7\\
-e^1 + ie^5 & i (\frac{l}{\sqrt{3}\, s} e^0 + \frac{m-k}{3s}e^4) & e^2 + ie^6\\
e^3 + ie^7 & -e^2 + ie^6 & i (\frac{m}{\sqrt{3}\, s} e^0 + \frac{k-l}{3s}e^4)
\end{pmatrix},
\end{equation}
where $m=-k-l$ and $s=\sqrt{(k^2+l^2+m^2)/6}$.

Then for any choice of non-zero constants $A,B,C,D$ the $3$-form
\begin{equation}\label{phi_W}
\varphi_W = ABC (e^{123} - e^{167} +  e^{257} - e^{356})
- D (A^2 e^{15} + B^2 e^{26} + C^2 e^{37}) \wedge e^4
\end{equation}
descends to a well-defined positive $3$-form giving a coclosed $\G_2$-structure on
$W_{k,l}$. The induced orientation corresponds to a non-vanishing
7-form $De^{1234567}$ and the coframe $Ae^1,Be^2,Ce^3,De^4,Ae^5,Be^6,Ce^7$
of $1$-forms is orthonormal in the induced metric $g_{\varphi_W}$.
Note that we may assume without loss of generality that $A>0$ and $B>0$.
From the results in~\cite{CMS} it follows that
if $\{k,l,m\}$ are not $\{1,1,-2\}$ or $\{1,-1,0\}$,
then every homogeneous, coclosed 
$\G_2$-structure on $W_{k,l}$  is of the form~\eqref{phi_W}. Furthermore, by
using the Maurer--Cartan equation $d\Omega=-\frac12 [\Omega,\Omega]$, it was
proved that exactly two such $\G_2$-structures are nearly parallel, more
precisely, these are proper nearly parallel.

Recall from Subsection \ref{AW} that each generic Aloff--Wallach space
$W_{k,l}$ has a structure of a smooth fibre bundle $\pi_{k,l}:W_{k,l}\to\CP^2$
with typical fibre the spherical space form $S^3/\Z_{|k+l|}$. This fibration
is not unique. The Weyl group of $\SU(3)$ contains an element of order 3 which
induces a diffeomorphism $\upsilon: W_{k,l}\to W_{l,m}$. The composition
$\pi_{l,m}\circ\upsilon$ defines a different fibration, in general by
different spherical space forms.

\begin{theorem}\label{assoc.fibr}
Let $\varphi_W$ be a nearly parallel $\G_2$-structure of the form~\eqref{phi_W}
on an Aloff--Wallach space $W_{k,l}$, with $k,l$
non-zero and co-prime integers such that $\{k,l,m\}$ are not $\{1,1,-2\}$ or
$\{1,-1,0\}$. Then the fibres of $\pi_{k,l}$ are embedded minimal associative
3-folds with respect to~$\varphi_W$.

Furthermore, for suitably `generic' $k,l$, the Aloff--Wallach space $W_{k,l}$
has three different $4$-dimensional deformation families of minimal associative
spherical space forms.
\end{theorem}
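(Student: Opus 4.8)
The plan is to prove the two assertions separately. For the first I will show directly that each fibre of $\pi_{k,l}$ is associative; minimality then follows from~\cite[Theorem~2.18]{BallM} and embeddedness is automatic for fibres of a smooth fibre bundle. Working at the identity coset, I read off the vertical tangent space of $\pi_{k,l}\colon W_{k,l}\to\CP^2$ from the Maurer--Cartan form~\eqref{MC}. Writing $\CP^2=\SU(3)/\U(2)$ with $\mathfrak{u}(2)$ the block isotropy subalgebra, the form~\eqref{MC} shows that the diagonal directions $e^0,e^4$ and the $(1,2)$-block directions $e^1,e^5$ span $\mathfrak{u}(2)=\la e_0,e_1,e_4,e_5\ra$, while $\frs^1_{k,l}=\la e_0\ra$; hence the fibre tangent space is $\la e_1,e_4,e_5\ra$. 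Restricting~\eqref{phi_W} to this $3$-plane annihilates every summand except $-D A^2\,e^{15}\we e^4$, and since $Ae^1,De^4,Ae^5$ is an orthonormal coframe this equals $\pm\vol$ of the fibre. Thus, with the appropriate orientation, $\varphi_W|_{\mathrm{fibre}}=\vol_{\mathrm{fibre}}$, so each fibre is a minimal associative $3$-fold, and varying the base point over $\CP^2$ already produces one $4$-dimensional family.

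For the second assertion the key tool is the order-three element of the Weyl group of $\SU(3)$, realised by conjugation by the cyclic permutation matrix $\sigma$. Since $\sigma\,\diag(e^{ik\theta},e^{il\theta},e^{im\theta})\,\sigma^{-1}=\diag(e^{im\theta},e^{ik\theta},e^{il\theta})$, the automorphism $\Ad_\sigma$ carries $\frs^1_{k,l}$ to $\frs^1_{m,k}$ and descends to diffeomorphisms $\upsilon\colon W_{k,l}\to W_{m,k}$ and $\upsilon^2\colon W_{k,l}\to W_{l,m}$ with $\upsilon^3=\id$. Composing with the canonical fibrations gives three fibrations $\pi_{k,l}$, $\pi_{m,k}\circ\upsilon$, $\pi_{l,m}\circ\upsilon^2$ of $W_{k,l}$ over $\CP^2$, whose fibres are the spherical space forms $S^3/\Z_{|m|}$, $S^3/\Z_{|l|}$ and $S^3/\Z_{|k|}$ respectively. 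To see that the fibres of the latter two are again $\varphi_W$-associative, I would check by direct computation with~\eqref{MC} that $\Ad_\sigma$ permutes the coframe cyclically, carrying the $W_{k,l}$-frame to the analogous frame for $W_{m,k}$, so that it sends a $3$-form of the shape~\eqref{phi_W} to one of the same shape with $A,B,C$ cyclically permuted. Then $\upsilon^*\varphi_W^{(m,k)}=\varphi_W^{(k,l)}$, whence $\upsilon$ maps the associative fibres of $\pi_{m,k}$ (supplied by the first part) to $\varphi_W$-associative submanifolds of $W_{k,l}$, namely the fibres of $\pi_{m,k}\circ\upsilon$; the argument for $\upsilon^2$ is identical.

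I expect this cyclic-invariance verification to be the main obstacle. One must track the signs in the action of $\Ad_\sigma$ on the off-diagonal pairs $(1,2)\mapsto(2,3)\mapsto(3,1)$ and on the Cartan directions $e^0,e^4$, and confirm that the induced sign changes affect only orientations (which are absorbed into the orientation of the fibre) together with a cyclic relabelling of $A,B,C$, so that the nearly parallel condition and the normalisation $A>0,B>0$ are preserved. A conceptual cross-check is available since, for generic $W_{k,l}$, there are exactly two proper nearly parallel $\G_2$-structures by~\cite{PP, Reidelgeld}; an order-three symmetry must fix each of a $2$-element set, which is consistent with $\upsilon^*$ preserving the form~\eqref{phi_W}.

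Granting the invariance, it remains to establish that the three families are genuinely different. Whenever $k,l$ are chosen so that $|k|,|l|,|m|$ are pairwise distinct (the ``generic'' condition, which in particular forces $k,l,m$ all non-zero and hence rules out $\{1,1,-2\}$ and $\{1,-1,0\}$, guaranteeing all three fibres are honest space forms), the fibres $S^3/\Z_{|m|}$, $S^3/\Z_{|l|}$ and $S^3/\Z_{|k|}$ are pairwise non-homeomorphic, so no member of one family can coincide with a member of another. Each family is smoothly parametrised by the $4$-dimensional base $\CP^2$ and sweeps out $W_{k,l}$, so in particular every fibre admits a nontrivial $4$-dimensional space of associative deformations obtained by moving the base point. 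This yields the three different $4$-dimensional deformation families of minimal associative spherical space forms.
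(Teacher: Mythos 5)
Your proof of the first assertion is essentially the paper's: both read off from~\eqref{MC} that the vertical space of the $\U(2)$-bundle is $\la e_0,e_1,e_4,e_5\ra$, so the fibre tangent space is $\la e_1,e_4,e_5\ra$, and both observe that only the term $-DA^2e^{15}\we e^4=A^2De^{145}$ of~\eqref{phi_W} survives restriction, giving the volume form of the orthonormal coframe $Ae^1,De^4,Ae^5$. For the second assertion your route differs from the paper's. The paper never pulls back the $\G_2$-form under the Weyl-group diffeomorphism: instead it factors $\pi_{k,l}$ through the flag manifold $F(1,2)=\SU(3)/T^2$ and replaces the $\CP^1$-bundle $F(1,2)\to\CP^2$ by the two other choices $h_3,h_2$ coming from the alternative embeddings of $\U(2)$ with isotropy algebras $\la e_0,e_3,e_4,e_7\ra$ and $\la e_0,e_2,e_4,e_6\ra$; the fibres of the resulting maps $W_{k,l}\to\CP^2$ are then directly seen to be calibrated, $\varphi_W|_Y=C^2De^{347}$ and $B^2De^{246}$ respectively. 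This sidesteps precisely the step you identify as your main obstacle, namely verifying that $\Ad_\sigma$ carries the chosen coframe of $W_{k,l}$ to that of $W_{m,k}$ up to signs and a cyclic relabelling of $A,B,C$ --- a verification you describe but do not carry out. Your approach is nonetheless viable, and in fact your worry about whether $\upsilon^*$ returns the \emph{given} nearly parallel structure or the other one (and the order-three-on-a-two-element-set cross-check) is immaterial: the calibration computation of the first part shows the fibres of $\pi_{m,k}$ are calibrated by \emph{every} positive $3$-form of the shape~\eqref{phi_W}, so it suffices that $\upsilon^*$ preserves that shape, not that it matches constants. Your identification of the three fibres as $S^3/\Z_{|m|}$, $S^3/\Z_{|l|}$, $S^3/\Z_{|k|}$, the genericity condition ($|k|,|l|,|m|$ pairwise distinct, so the families are distinguished by the fundamental groups of their members), and the $\CP^2$-parametrisation of each $4$-dimensional family all agree with the paper's intent. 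In short: correct, with the second half argued by symmetry and pullback where the paper argues by exhibiting the alternative $\U(2)$-embeddings and restricting $\varphi_W$ directly; the paper's version buys a shorter, sign-free computation, while yours makes the role of the order-three Weyl element explicit.
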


\begin{proof}
The fibre bundle $\pi_{k,l}$ can be obtained by considering the embedding of
$\U(2)$ as a Lie subgroup of $\SU(3)$ consisting of the block-diagonal
matrices with blocks $Ae^{i\theta}$ and $1/\det (Ae^{i\theta})=e^{-2i\theta}$,
for all $A\in \SU(2)$ and $\theta\in\RE$. Then
$$
\pi_{k,l}: W_{k,l}=SU(3)\S^1_{k,l} \to \SU(3)/\U(2) \cong \CP^2
$$
between respective cosets and the fibres is
$\U(2)/S^1_{k,l}\cong S^3/\Z_{|k+l|}$.

Comparing the tangent spaces to the cosets of $\U(2)$ in $\SU(3)$
with~\eqref{MC}, we find that the vertical spaces of the principal $\U(2)$-bundle 
$\SU(3)\to\CP^2$ are defined by the vanishing of $e^2,e^3,e^6,e^7$.
Thus $\varphi_W$ restricts on each fibre of $\pi_{k,l}$ to the volume form
$A^2D e^{145}$ of the metric induced by $g_{\varphi_W}$, thus the fibres are
associative.

We next show that the fibres of $\pi_{l,m}\circ\upsilon$ are also minimal
associative for the {\em same} $\G_2$-structure $\varphi_W$.

To this end, we further note that $\pi_{k,l}$ factors through the flag
manifold $F(1,2)\cong \SU(3)/T^2$ (where $T^2\subset \SU(3)$ is the subgroup of
diagonal matrices),
\begin{equation}\label{u1.bundle}
W_{k,l} \to F(1,2) \to \CP^2.
\end{equation}
Here the first map is a principal $S^1$-bundle and the second map is a
$\CP^1$-bundle associated to the principal $\U(2)$-bundle $\SU(3)\to\CP^2$
discussed above. In particular, the vertical space of the $S^1$-bundle is
spanned by $e_4$.

It is well-known that $F(1,2)$ is a complex manifold with a K\"ahler
structure defined by the Kirillov--Kostant--Souriau symplectic form.
We can consider a different choice of the $\CP^1$-bundle $h_3:F(1,2) \to \CP^2$
corresponding to a different embedding of $\U(2)$ as a Lie subgroup of $\SU(3)$,
with the tangent space at the identity now spanned by
$e_0,e_3,e_4,e_7\in\su(3)$ (rather than $e_0,e_1,e_4,e_5$ chosen above). Let
$Y$ be a fibre of the map $W_{k,l} \to F(1,2) \overset{h_3}{\to} \CP^2$. Then
the forms $e^1,e^5,e^2,e^6$ vanish on $Y$ and so $\varphi_W|_Y = C^2D e^{347}$.
Thus $Y$ is an embedded minimal associative 3-fold with respect to a
$\G_2$-structure $\varphi_W$ of the form~\eqref{phi_W} on $W_{k,l}$.

The third family of associative 3-folds follows by a similar argument,
replacing the $\CP^1$-bundle $h_3$ with $h_2:F(1,2) \to \CP^2$ where the
vertical spaces  are now spanned by $e_0,e_2,e_4,e_6$.
\end{proof}

\begin{remark}
The exceptional Aloff--Wallach space $W_{1,-1}$ has, up to homotheties,  only
one homogeneous nearly parallel $\G_2$-structure, which is of the form
~\eqref{phi_W}. The argument and result of Theorem~\ref{assoc.fibr}
applies to this latter $\G_2$-structure noting that the fibres of $\pi_{1,-1}$
are now $S^2\times S^1$ as $S^1_{1,-1}$ is a subgroup of $\SU(2)$.

When the $G_2$-structure $\varphi_W$ in Theorem~\ref{assoc.fibr} is
{\em not} nearly parallel, the fibres of $\pi_{k,l}$ are associative 3-folds
but need not be minimal.
\end{remark}

\begin{remark}
For the exceptional Aloff--Wallach space $W_{1,1}$ there are still exactly two
homogeneous Einstein metrics~\cite{Ni}. However, one of these metrics is
induced by a proper nearly parallel $G_2$-structure on $W_{1,1}$ which is
{\em not} of the form \eqref{phi_W}, whereas the other metric is induced by
a $3$-Sasakian structure on $W_{1,1}$. For this latter $3$-Sasakian manifold, the
above construction of minimal associative $3$-folds based on~\eqref{u1.bundle}
remains valid. On the other hand, very recently Ball and Madnick
constructed in $W_{1,1}$ examples of associative 3-folds diffeomorphic to
an $S^1$-bundle over a genus $g$ surface for all $g\ge 0$
\cite[Theorem 5.9]{BallM3}.
\end{remark}

\section{Locally conformal parallel $\Spin(7)$-structures} \label{sect:Spin(7)}

In this section we give examples of  formal compact 8-manifolds,  
 with a locally conformal parallel $\Spin(7)$-structure.

An $8$-dimensional manifold $N$ has a $\Spin(7)$-structure if there is a reduction of the structure group of 
its frame bundle from ${\GL}(8,\mathbb{R})$ to the  exceptional Lie group $\Spin(7)$.  
In opposite to the existence of $\G_2$-structures on any orientable and spin manifold of dimension $7$, it happens that not every
$8$-dimensional spin manifold $N$ admits a $\Spin(7)$-structure. In fact, in \cite{LM} it is proved that $N$ has a $\Spin(7)$-structure 
if and only if 
$$
p_{1}^{2}(N) - 4\ p_2(N) + 8 \chi(N) =0,
$$
for an appropriate choice of the orientation, where $p_1(N)$, $p_2(N)$ and $\chi(N)$ are the first Pontryagin class, the 
second Pontryagin class and the Euler characteristic of $N$, respectively.

The presence of a $\Spin(7)$-structure is equivalent to the existence of a nowhere vanishing global differential $4$-form $\Omega$  on the $8$-manifold $N$,
which can be written locally as
 \begin{align*}
\Omega=\Big(e^{127}+e^{347}+e^{567}+e^{135}-e^{146}-e^{236}-e^{245}\Big)\wedge e^8 \\
+\, e^{1234} + e^{1256} + e^{1367} + e^{1457} + e^{2357} + e^{2467} + e^{3456}, 
\end{align*}
with respect to some (local) basis $\{e^1,\dotsc, e^8\}$ of the (local) $1$-forms on $N$. 

Since $\Spin(7) \subset \SO(8)$, a $\Spin(7)$-structure $\Omega$ on $N$ determines a Riemannian metric $g_\Omega$ and an orientation on $N$ 
such that 
$$
g_{\Omega}(U,V) \vol=\frac 17 (U\lrcorner\Omega) \wedge \star_{8}(V\lrcorner\Omega),
$$
for any vector fields $U, V$ on $N$, where $\vol$ is the volume form on $N$, and $\star_{8}$ is the Hodge star operator determined by $g_{\Omega}$.
Note that if $\Omega$ is a $\Spin(7)$-structure on $N$, then $\Omega$ is a self-dual $4$-form, i.e.\ $\star_{8} \Omega = \Omega$.

Let us recall that a $\Spin(7)$-structure $\Omega$ on a 8-manifold $N$ is said to be {\em parallel} if the induced metric by  $\Omega$ has holonomy 
contained in $\Spin(7)$. This is equivalent to say that the $4$-form $\Omega$ is parallel with respect to the Levi--Civita connection of the metric
$g_{\Omega}$, which happens if and only if $d\Omega=0$ \cite{Fer}.

\begin{definition} \label{def:lcSpin(7)}
A $\Spin(7)$-structure $\Omega$ on a 8-manifold $N$ is said to be {\em locally conformal parallel} if 
  \begin{equation}\label{lcp-spin(7)}
 d\Omega=\Omega \wedge \Theta,
  \end{equation}
 for a closed non-vanishing $1$-form $\Theta$, which is known as the {\em Lee form} of the $\Spin(7)$-structure. A manifold endowed with such a structure is called
{\em locally conformal parallel $\Spin(7)$-manifold}.
\end{definition}

Let $\varphi$ be a nearly parallel $\G_2$-structure on a $7$-manifold $P$, so that $d\varphi=\tau_0 \star\varphi$ by \eqref{g2np}. Then, the product 
manifold $P\times S^1$ carries a natural locally conformal parallel $\Spin(7)$-structure $\Omega$ defined by 
  \begin{equation}\label{lcp-spin(7)-product}
\Omega = \varphi \wedge  \theta + \star \varphi,
  \end{equation}
where $\theta$ is the volume form on $S^1$, and $\star$ is the Hodge star operator determined by the induced metric by $\varphi$ on $P$. Note that
 the Lee form $\Theta$ is given by  $\Theta= \tau_0\,\theta$. In fact, using \eqref{lcp-spin(7)-product},
 we have $d\Omega=\tau_0\,\star\varphi \wedge  \theta = \tau_0\,\Omega \wedge  \theta$, that is $d\Omega = \Omega \wedge  \Theta$.

By \cite[Theorem 3.2]{BFMT},  we know that $Q(1,1,1)$ is non-formal. Hence the product manifold $Q(1,1,1)\times S^1$ is non-formal
by \cite{DGMS}. Then, if we consider the $S^1$-family of nearly parallel $\G_2$-structures 
$\varphi_t$ defined in \eqref{def:G2-M}, we have:

\begin{proposition}\label{prod.non-formal}
The product manifold $Q(1,1,1)\times S^1$ is non-formal and has an  $S^1$-family  of locally conformal parallel $\Spin(7)$-structures 
$\Omega_{t} = \varphi_{t} \wedge  \theta + \star \varphi_{t}$, with Lee form $\Theta= 4\,\theta$.
\end{proposition}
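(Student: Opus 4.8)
The plan is to prove the two assertions separately, each by reduction to facts already available. \emph{Non-formality.} Since $Q(1,1,1)$ is non-formal by \cite[Theorem 3.2]{BFMT}, I would argue that $Q(1,1,1)\times S^1$ inherits this: a non-zero triple Massey product on $Q(1,1,1)$ (which is what detects its non-formality) pulls back along the projection $p\colon Q(1,1,1)\times S^1\to Q(1,1,1)$ to a non-zero Massey product on the total space, because $p$ admits the section $i\colon Q(1,1,1)\hookrightarrow Q(1,1,1)\times S^1$ with $i^*p^*=\mathrm{id}$ on cohomology, so no representative can become exact after pulling back. By \cite{DGMS} a non-zero Massey product obstructs formality, hence $Q(1,1,1)\times S^1$ is non-formal. (Equivalently, the minimal model of the product is the tensor product of that of $Q(1,1,1)$ with $(\bigwedge b,0)$, $|b|=1$, and such a tensor product is formal iff the first factor is.)

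\emph{The $\Spin(7)$-family.} For the second assertion I would apply the general construction \eqref{lcp-spin(7)-product} to each $\varphi_t$ of the family \eqref{def:G2-M}. First, $\star\varphi_t$ is closed: differentiating $d\varphi_t=\tau_0\star\varphi_t$ gives $0=\tau_0\,d\star\varphi_t$, so $d\star\varphi_t=0$. With $\theta$ the closed volume form on the $S^1$-factor and $\Omega_t=\varphi_t\wedge\theta+\star\varphi_t$, the relations $d\theta=0$ and $d\star\varphi_t=0$ yield
$$
d\Omega_t=d\varphi_t\wedge\theta=\tau_0\,\star\varphi_t\wedge\theta=\Omega_t\wedge(\tau_0\,\theta),
$$
the last equality using $\varphi_t\wedge\theta\wedge\theta=0$. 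Thus every $\Omega_t$ is a locally conformal parallel $\Spin(7)$-structure with closed Lee form $\Theta=\tau_0\,\theta$; that each $\Omega_t$ is a bona fide global $\Spin(7)$ $4$-form is precisely the content of the construction preceding \eqref{lcp-spin(7)-product}, applied to the global nearly parallel form $\varphi_t$ on $Q(1,1,1)$. As $t$ runs over $\RE/2\pi\bZ$ these assemble into the claimed $S^1$-family.

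The computation of $d\Omega_t$ is routine, so the only point requiring care is the value of the constant $\tau_0$, and this is where I expect the main (minor) obstacle. The family exactly as written in \eqref{def:G2-M} satisfies $d\varphi_t=2\pi\star\varphi_t$ by \eqref{4-form}, i.e.\ $\tau_0=2\pi$; to obtain the stated $\Theta=4\,\theta$ one works instead with the nearly parallel $\G_2$-structure whose underlying metric is the Sasaki--Einstein metric in its standard Einstein normalisation $\mathrm{scal}=42=\tfrac{21}{8}\tau_0^2$, which forces $\tau_0=4$. I would therefore fix this normalisation at the outset, after which the general identity $\Theta=\tau_0\,\theta$ gives $\Theta=4\,\theta$ directly.
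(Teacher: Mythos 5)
Your argument is correct and follows essentially the same route as the paper: the paper deduces non-formality of the product directly from that of $Q(1,1,1)$ by citing \cite{DGMS} (your Massey-product pullback via the section, or equivalently the observation that the minimal model of a product is the tensor product of minimal models, just fills in that standard detail), and the $\Spin(7)$-family is exactly the general construction \eqref{lcp-spin(7)-product} applied to each $\varphi_t$, yielding $d\Omega_t=\tau_0\,\star\varphi_t\wedge\theta=\Omega_t\wedge(\tau_0\,\theta)$. Your remark about the constant is well taken and is the one point the paper glosses over: by \eqref{4-form} the family \eqref{def:G2-M} as written satisfies $d\varphi_t=2\pi\star\varphi_t$, so the stated Lee form $\Theta=4\,\theta$ presupposes the standard Sasaki--Einstein normalisation (${\mathrm{scal}}=42=\tfrac{21}{8}\tau_0^2$, hence $\tau_0=4$) rather than the coordinate normalisation used in Section \ref{ex:nearlyparallel}, and fixing that normalisation at the outset, as you propose, is the correct resolution.
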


Let $P$ be a differentiable manifold and let $\rho:P\to P$ be a diffeomorphism.
The {\em mapping torus} $P_{\rho}$ of $\rho$ is the
manifold obtained from $P\times[0,1]$ by identifying the ends with $\rho$, that is
$$
P_{\rho}= \frac {P\times[0,1]}{(x,0) \sim (\rho(x),1)}.
$$
The natural
map $\pi \colon P_{\rho} \to S^1$ defined by $\pi(x,t)=e^{2\pi it}$ is the projection
of a locally trivial fiber bundle (here we think of $S^1$ as the interval $[0,1]$ with identified end points). Thus, any  $\rho$-invariant 
form $\beta$ on $P$ defines a form on $P_{\rho}$, 
since the pullback of $\beta$ to $P\times\mathbb{R}$ is invariant under the diffeomorphism $(x,t) \mapsto  (\rho(x),t+1)$. 
For the same reason, the $1$-form $dt$ on $\RE$, where $t$ is the coordinate on $\RE$,  induces a closed $1$-form $\nu$ on $P_{\rho}$.

A theorem of Tischler \cite{Tischler} asserts that a compact manifold is a mapping torus if and only if it admits a non-vanishing closed $1$-form.
This result was extended to locally conformal parallel $\Spin(7)$ manifolds 
in {\cite[Theorem B]{IPP}. There, it is proved that
there exists a} fibre bundle $N\rightarrow S^1$ with abstract fibre $P/\Gamma$, where $P$ is a compact simply connected 
nearly parallel $\G_2$-manifold and $\Gamma$ is a finite subgroup of isometries of $P$ acting freely. Moreover, the cone of
$P/\Gamma$ covers $N$ with cyclic infinite covering transformation group.

Notice that if $P$ is a $7$-dimensional compact manifold endowed with a nearly parallel $\G_2$-structure $\varphi$,  
and $\rho : P \rightarrow P$ is a diffeomorphism such that $\rho^*\varphi =
\varphi$,  
then $\rho$ preserves the orientation and the metric on $P$ induced by the $\G_2$-structure $\varphi$. So, 
$\rho^*(\star\varphi) = \star\varphi$, and
the mapping torus $N=P_\rho$ has a locally conformal parallel $\Spin(7)$-structure $\Omega$ given by 
$
\Omega = \varphi \wedge \nu + \star\varphi.
$

Let us recall that if $P$ is a differentiable manifold and $\rho:P\to P$ is a diffeomorphism,
then the cohomology of the mapping torus $N=P_{\rho}$ of $\rho$ sits in an exact sequence \cite[Lemma 12]{BFM}
 $$
 0 \to C^{r-1} \to H^r(N) \to K^{r}\to 0,
 $$
where $K^r$ is the kernel of $\varphi^*-\id\colon H^{r}(P)\to  H^{r}(P)$, and $C^{r}$ is its cokernel. Thus,
\begin{align*}
H^{r}(P_{\rho})\cong&\ker\left(\varphi^{*}-\id:H^r(P)\, \rightarrow\,   H^r(P)\right)\\
&\oplus[\nu]\wedge\frac{H^{r-1}(P)}{{\mathrm {Im}}\left(\varphi^{*}-\id:H^{r-1}(P)\, \rightarrow\,   H^{r-1}(P)\right)}\,.
\end{align*}

Now we consider the manifold $M=Q(1,1,1)$ with the family of nearly parallel $\G_2$-structures $\varphi_t$ defined by \eqref{def:G2-M}.
Let  $\rho$ be the diffeomorphism of $Q(1,1,1)$ given by
$$
\rho(\theta_1, \phi_1, \theta_2, \phi_2,   \theta_3, \phi_3, s) = ( \theta_2, \phi_2, - \theta_1, \phi_1,  \theta_3, \phi_3, s).
$$
Then, the diffeomorphism $\rho$ on the $1$-forms  $\{\alpha_1, \ldots, \alpha_6, \alpha_7=\zeta\}$ on $Q(1,1,1)$ is given by
$$
 \rho^*\alpha_1= \alpha_3, \quad   \rho^*\alpha_2= \alpha_4, \quad  \rho^*\alpha_3= -\alpha_1, \quad  \rho^*\alpha_4= - \alpha_2, \quad  \rho^*\alpha_i= \alpha_i,  \,\,  i= 5, 6, 7.
$$

\begin{proposition}\label{prop:ex-mappingtorus}
The mapping torus $M_{\rho} = Q(1,1,1)_{\rho}$ is formal and it has an $S^1$ family 
of locally conformal parallel $\Spin(7)$-structures.
\end{proposition}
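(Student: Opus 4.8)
The plan is to treat the two assertions separately, beginning with the locally conformal parallel $\Spin(7)$-structures, which is the quicker part. First I would check that $\rho$ preserves every member of the family $\varphi_t$. From the tabulated action of $\rho^*$ on the coframe $\{\alpha_1,\dots,\alpha_7\}$ one computes $\rho^*\omega=\omega$ and $\rho^*\eta=\eta$ (the interchange $\alpha_1\wedge\alpha_2\leftrightarrow\alpha_3\wedge\alpha_4$ is sign-compensated by the two minus signs on $\alpha_3,\alpha_4$), and likewise $\rho^*\Psi=\Psi$, using $\rho^*s=s$ and the fact that transposing the first two factors of $(\alpha_1+i\alpha_2)\wedge(\alpha_3+i\alpha_4)\wedge(\alpha_5+i\alpha_6)$ produces exactly the sign demanded by \eqref{horizontal.3}. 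Hence $\rho^*\varphi_t=\varphi_t$ for all $t$ by \eqref{def:G2-M}; in particular $\rho$ preserves the induced metric and orientation (via \eqref{metric}), so $\rho^*(\star\varphi_t)=\star\varphi_t$. Each $\varphi_t$ and $\star\varphi_t$ therefore descends to $M_\rho$, and $\Omega_t=\varphi_t\wedge\nu+\star\varphi_t$ is, exactly as in \eqref{lcp-spin(7)-product} and Proposition~\ref{prod.non-formal}, a locally conformal parallel $\Spin(7)$-structure with Lee form $\Theta=4\,\nu$; letting $t$ vary over $\RE/2\pi\Z$ gives the desired $S^1$-family.

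For formality I would first record the action of $\rho^*$ on $H^*(M)$ in low degrees. Since $\rho^*$ permutes the $S^2$-factor classes as $\omega_1\leftrightarrow\omega_2$, $\omega_3\mapsto\omega_3$, while $H^1(M)=H^3(M)=H^4(M)=0$ and $H^2(M)$ is two-dimensional with $\rho^*$ acting as the transposition $\bar a_1\leftrightarrow\bar a_2$ (subject to $\bar a_1+\bar a_2+\bar a_3=0$), the map $\rho^*-\id$ on $H^2(M)$ has one-dimensional kernel $\langle\bar a_1+\bar a_2\rangle$ and one-dimensional cokernel. Feeding this into the mapping-torus exact sequence displayed just before the statement (from \cite{BFM}) yields, in degrees $\le 4$: $H^0(M_\rho)=\RE$, $H^1(M_\rho)=\langle[\nu]\rangle$, $H^2(M_\rho)$ one-dimensional with a generator I call $w$, $H^3(M_\rho)=\bigl[\nu\bigr]\wedge\bigl(H^2(M)/{\mathrm{Im}}(\rho^*-\id)\bigr)$ again one-dimensional, and $H^4(M_\rho)=0$. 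The only products I need are $[\nu]\cup w\neq 0$ in $H^3$ (it generates the cokernel term) and $w\cup w\in H^4(M_\rho)=0$.

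I would then build the minimal model of the compact, connected, orientable $8$-manifold $M_\rho$ through degree $3$ and invoke Theorem~\ref{fm2:criterio2}, by which (for dimension $2\cdot4$) formality is equivalent to $3$-formality. The low-degree minimal model is forced to be $(\bigwedge(z,w,u),d)$ with $|z|=1$, $|w|=2$, $|u|=3$, $dz=dw=0$ and $du=w^2$: the generator $z$ realizes $H^1$, $w$ realizes $H^2$, the class $zw$ already realizes $H^3$ so that no closed degree-$3$ generator is needed, and the single non-closed generator $u$ is forced by the need to make $w^2$ exact since $H^4(M_\rho)=0$. In the notation of Definition~\ref{def:primera} this gives $C^1=\langle z\rangle$, $C^2=\langle w\rangle$, $N^3=\langle u\rangle$ and all other $C^i,N^i$ zero; conditions (1) and (2) are immediate. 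Condition (3) holds because the ideal $I_3=I(\langle u\rangle)$ contains no nonzero closed element: for $P\in\bigwedge(z,w)$ one has $d(uP)=w^2P-u\,dP$, whose two summands lie in complementary $u$-degrees, so $d(uP)=0$ forces $w^2P=0$ and hence $P=0$. Thus $M_\rho$ is $3$-formal and therefore formal.

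The content of the argument is conceptual rather than computational, and this is where I expect the only real subtlety to lie: $M=Q(1,1,1)$ is itself non-formal, its non-formality arising from the interaction of the two degree-$2$ generators (the nonzero Massey product $\langle[a_2],[a_2],[a_3]\rangle$). The key point is that the monodromy $\rho$ is precisely the involution interchanging the first two $S^2$-factors, so the anti-invariant degree-$2$ class is killed in $H^2(M_\rho)$; with only one surviving degree-$2$ generator there is simply no room for the obstructing Massey product, and the $3$-formality check passes vacuously. As a sanity test one may observe that this leaves $M_\rho$ with the same Betti numbers — indeed the same cohomology ring and, one can verify, the same minimal model — as $S^1\times S^2\times S^5$.
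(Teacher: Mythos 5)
Your argument is correct and follows essentially the same route as the paper: both verify that $\rho$ preserves the family $\varphi_t$ (hence $\star\varphi_t$) to get the $S^1$-family of locally conformal parallel $\Spin(7)$-structures, compute $H^{\le 4}(M_\rho)$ from the mapping-torus sequence using $\rho^*(a_1)=a_2$, $\rho^*(a_2)=a_1$, read off the degree-$\le 3$ part of the minimal model as $\bigwedge(a,b,x)$ with $dx=b^2$, and conclude $3$-formality via Theorem~\ref{fm2:criterio2}. The only cosmetic differences are that you make the $\rho$-invariance of $\Psi$ and $\omega$ explicit and dispose of condition (3) of Definition~\ref{def:primera} in one stroke (the ideal $I(N^{\le 3})$ contains no nonzero closed element), where the paper enumerates the possible degrees $5$--$8$.
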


\begin{proof} 
Clearly the diffeomorphism $\rho$ preserves the nearly parallel $\G_2$-structures
$\varphi_t$ defined in \eqref{def:G2-M} and, taking into account \eqref{4-form},  $\rho$ preserves also $\star\varphi_t$.
Then, by \cite[Theorem B]{IPP}, we know that $M_{\rho} = Q(1,1,1)_{\rho}$ carries a $S^1$-family   
of locally conformal parallel $\Spin(7)$-structures.

In order to prove that $M_{\rho} = Q(1,1,1)_{\rho}$ is formal we proceed as follows.
As in Section \ref{ex:nearlyparallel}, we write with the same symbol the lifting to $M$ of the $1$-forms 
 $\alpha_{i}$, $1\leq i \leq 6$, and of the K\"ahler forms $\omega_j$, $1\leq j \leq 3$, where $[\omega_1]$,  $[\omega_2]$ and $[\omega_3]$ 
 are the generators of the integral cohomology group of each of the 
$S^2$-factors on $S^2\times S^2 \times S^2$. 
Now, for simplicity of notation we write $a_1=[\omega_1]$,  $a_2=[\omega_2]$ and $a_3=[\omega_3]$. Since $M= Q(1,1,1)$ is the principal $S^1$-bundle 
 $$
 S^1 \too M= Q(1,1,1) \too S^2\times S^2\times S^2
 $$ 
with first Chern class equal to $a_1+a_2+a_3$, the Gysin sequence gives that
 \begin{align*}
& H^0(M,{\mathbb{Z}}) =H^7(M,\bZ)=\bZ, \\ 
& H^1(M,{\mathbb{Z}})=H^3(M,\bZ)=H^6(M,\bZ)=0, \\
& H^2(M,{\mathbb{Z}})=H^5(M,\bZ)=\bZ^2\, , \\ 
& H^4(M,{\mathbb{Z}})=\bZ \la a_1a_2,a_1a_3,a_2a_3\ra/\la a_1a_2+a_1a_3,a_2a_1+a_2a_3,a_3a_1+a_3a_2 \ra =\bZ_{2}.
 \end{align*}
 So, the de Rham cohomology groups of $M=Q(1,1,1)$ up to the degree $4$ are 
$$
 H^0(M)=\langle 1\rangle\,, \quad  H^1(M)= H^3(M)\,=\,H^4(M)\,=\,0\,, \quad  H^2(M)= \langle a_1,\, a_2\rangle\,.
$$
Since  $\rho^*\omega_1=\omega_2$ and $\rho^*\omega_2=\omega_1$, the de Rham cohomology groups of the mapping torus $M_{\rho} = Q(1,1,1)_{\rho}$ up to the degree $4$ are 
\begin{align*}
H^0(M_{\rho})&=\langle 1\rangle, \quad H^1(M_{\rho})=\langle [\nu] \rangle, \quad H^2(M_{\rho})\,=\langle a_1+a_2\rangle,\\
H^3(M_{\rho})&=\langle (a_1+a_2)\wedge [\nu]\rangle, \quad H^4(M_{\rho})\,=\,0\,.
\end{align*}
Therefore, the minimal model of $M_{\rho}$ must be a differential graded algebra
$(\bigwedge V,d)$, being $\bigwedge V$ the free algebra of the form
$\bigwedge V=\bigwedge(a, b, x)\otimes \bigwedge V^{\geq 4}$, where $|a|=1$, $|b|=2$,
$|x|=3$, and $d$ is defined by $da=0=db$, $dx=b^2$. 
According to Definition~\ref{def:primera}, we get
$N^j=0$ for $j=1,2$, thus $M_{\rho}$ is $2$-formal. Moreover, $M_{\rho}$ is 
$3$-formal. In  fact, take $z\in I(N^{\leq 3})$ a closed
element in $\bigwedge V$. As $H^*(\bigwedge V)=H^*(M_{\rho})$ has non-zero cohomology in degrees $0,1,2,3,5,6,7,8$, 
it must be $\deg z=5,6,7,8$. If $\deg z=5$ then $z=b\cdot\,x$ (up to non-zero scalar), which is not closed because $d(b\cdot\,x) = b^3 \not=0$. If 
$\deg z=6$ then $z=a\cdot\,b\cdot\,x$ which is not closed because $d(a\cdot\,b\cdot\,x) = -a\cdot\,b^3 \not=0$.  If 
$\deg z=7$ then $z=b^2\cdot\,x$ which is not closed, and if $\deg z=8$ then $z=a\cdot\,b^2\cdot\,x$ which is not closed either.
Thus,  according to Definition \ref{def:primera}, 
$M_{\rho}$ is $3$-formal and, by Theorem \ref{fm2:criterio2},  $M_{\rho}$ is formal.
\end{proof}

Next, for $M=\mathcal B$ or $M=W_{k,l}$, where $\mathcal B =\SO(5)/\SO(3)$ 
is the Berger space and $W_{k,l}=\SU(3)/ S_{k,l}^1$ are the the Aloff--Wallach spaces,
we study the formality of the mapping torus of a diffeomorphism of $M$ preserving a nearly-parallel $\G_2$-structure on $M$ (see subsections \ref{sec:extra-1} and \ref{AW}).

\begin{proposition}\label{prop:maptorus-Berger}
Let $\rho : \mathcal B \rightarrow \mathcal B$ be a diffeomorphism preserving a nearly-parallel $\G_2$-structure on $\mathcal B$.
Then, the mapping torus ${\mathcal B}_{\rho}$ is formal and it has a locally conformal parallel $\Spin(7)$-structure.
\end{proposition}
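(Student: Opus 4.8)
The plan is to handle the two assertions in turn, treating the $\Spin(7)$-structure first and then formality. For the $\Spin(7)$-structure I would simply invoke the mapping-torus construction of this section. Since $\rho$ preserves the nearly parallel $\G_2$-structure $\varphi$ on $\mathcal B$, it preserves the metric and orientation induced by $\varphi$, and therefore $\rho^*(\star\varphi)=\star\varphi$ as well. Consequently both $\varphi$ and $\star\varphi$ descend to $\mathcal B_\rho$, and, writing $\nu$ for the closed $1$-form induced by $dt$, the $4$-form $\Omega=\varphi\wedge\nu+\star\varphi$ is a locally conformal parallel $\Spin(7)$-structure on $\mathcal B_\rho$ with closed Lee form $\Theta=\tau_0\,\nu$; this is exactly \cite[Theorem B]{IPP} in the present situation.

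For formality I would first determine the real cohomology of $\mathcal B_\rho$. By Berger's theorem \cite{Berger} (see also Theorem~\ref{th:formal-Berger}), $\mathcal B$ is a rational homology $7$-sphere, so $H^0(\mathcal B)\cong H^7(\mathcal B)\cong\RE$ while $H^r(\mathcal B)=0$ for $1\le r\le 6$. As $\rho$ preserves $\varphi$ it acts as the identity on $H^0(\mathcal B)$ and, being orientation-preserving, on $H^7(\mathcal B)$; hence $\rho^*-\id$ vanishes on all of $H^*(\mathcal B)$. Substituting this into the exact sequence for the cohomology of a mapping torus \cite[Lemma~12]{BFM} gives $H^r(\mathcal B_\rho)\cong H^r(\mathcal B)\oplus [\nu]\wedge H^{r-1}(\mathcal B)$, so that $H^0(\mathcal B_\rho)=H^1(\mathcal B_\rho)=H^7(\mathcal B_\rho)=H^8(\mathcal B_\rho)=\RE$ and all intermediate groups vanish; that is, $\mathcal B_\rho$ has the real cohomology of $S^1\times S^7$.

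Given this, I would construct the minimal model directly rather than checking $s$-formality. Let $(\bigwedge(a,u),0)$ be the free algebra on one generator $a$ of degree $1$ and one generator $u$ of degree $7$ with zero differential, and define a morphism $f\colon(\bigwedge(a,u),0)\to(\Omega^*(\mathcal B_\rho),d)$ by sending $a$ to $\nu$ and $u$ to a closed $7$-form representing the generator of $H^7(\mathcal B_\rho)$. This is a morphism of DGAs, and it induces an isomorphism on cohomology: in degrees $0,1,7$ by construction, in degrees $2,\dots,6$ because both sides vanish there, and in degree $8$ because $f(a\,u)=\nu\wedge(\cdots)$ represents the generator of $H^8(\mathcal B_\rho)$, the cup product $H^1\times H^7\to H^8$ being a perfect pairing by Poincar\'e duality. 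Thus $(\bigwedge(a,u),0)$ is the minimal model of $\mathcal B_\rho$ --- precisely the minimal model of $S^1\times S^7$ --- and, its differential being zero, it is manifestly formal. Hence $\mathcal B_\rho$ is formal.

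I expect the only non-routine points to be the two small verifications underlying the cohomology computation: that $\rho^*$ is the identity on $H^7(\mathcal B)$, which follows from orientation-preservation, and that the top class $[\nu\wedge u]$ is nonzero, which follows from Poincar\'e duality. Once these are in hand, the sparseness of the cohomology ring forces formality with no further work. As an alternative to writing down the minimal model, one could instead appeal to Theorem~\ref{fm2:criterio2}: $\mathcal B_\rho$ is a compact orientable $8$-manifold, so formality is equivalent to $3$-formality, and the vanishing of $H^2(\mathcal B_\rho),\dots,H^6(\mathcal B_\rho)$ makes the conditions of Definition~\ref{def:primera} trivial to check in degrees $\le 3$.
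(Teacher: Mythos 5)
Your proposal is correct, and the first half (the $\Spin(7)$-structure via the mapping-torus construction and \cite[Theorem B]{IPP}) is exactly what the paper does. For formality the paper takes a slightly different, and in fact shorter, final step: it only records the cohomology of ${\mathcal B}_{\rho}$ up to degree $6$, observes that the minimal model must therefore have the form $\bigwedge(a)\otimes\bigwedge V^{\geq 7}$ with $|a|=1$ and $da=0$, and concludes $3$-formality from Definition~\ref{def:primera} (all the spaces $N^j$, $j\leq 6$, vanish), whence formality by Theorem~\ref{fm2:criterio2}. This is the route you mention only as an alternative at the end. Your primary argument instead computes the full cohomology of ${\mathcal B}_{\rho}$ (using that $\rho^*$ is the identity on $H^7(\mathcal B)$ and the Poincar\'e duality pairing $H^1\times H^7\to H^8$) and exhibits an explicit quasi-isomorphism from $(\bigwedge(a,u),0)$, thereby identifying the minimal model of ${\mathcal B}_{\rho}$ with that of $S^1\times S^7$. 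Both arguments are sound; yours buys the stronger conclusion that ${\mathcal B}_{\rho}$ and $S^1\times S^7$ share a minimal model (parallel to what the paper proves for $\mathcal B$ and $S^7$ in Theorem~\ref{th:formal-Berger}), at the cost of the two extra verifications you flag, while the paper's $3$-formality shortcut never needs to look above degree $6$ and so avoids the top-degree cup-product check entirely.
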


\begin{proof} 
We consider a nearly parallel $\G_2$-structure on $\mathcal B$, which is preserved by the diffeomorphism $\rho : \mathcal B \rightarrow \mathcal B$.
(As we already mentioned in subsection \ref{sec:extra-1}, the only known nearly parallel $\G_2$-structure on $\mathcal B$ is proper and is explicitly given in \cite{BallM2}.) 
Theorem B in \cite{IPP} implies that ${\mathcal B}_{\rho}$ has a locally conformal parallel $\Spin(7)$-structure.

According to the proof of Theorem \ref{th:formal-Berger}, we know that $\mathcal B$ and the $7$-sphere $S^7$ have the same minimal model.
Hence, the de Rham cohomology groups of $\mathcal B$ up to the degree $6$ are 
$$
 H^0(\mathcal B)=\langle 1\rangle\,, \quad  H^k(\mathcal B)\,=\,0\,, \quad  1\leq k\leq 6.
$$
So, the de Rham cohomology groups of the mapping torus ${\mathcal B}_{\rho}$ up to the degree $6$ are 
$$
 H^0({\mathcal B}_{\rho})=\langle 1\rangle\,, \quad  H^1({\mathcal B}_{\rho})\,=\,\langle [\nu] \rangle\,,\quad  H^k({\mathcal B}_{\rho})\,=\,0\,, \quad  2\leq k\leq 6.
$$
Therefore, the minimal model of ${\mathcal B}_{\rho}$ must be a differential graded algebra
$(\bigwedge V, d)$, being $\bigwedge V$ the free algebra of the form $\bigwedge V=\bigwedge (a)\otimes \bigwedge V^{\geq 7}$,
with $|a|=1$, and $d$ is given by $da=0$. 
Now, 
we have $C^1=\la a\ra$, $C^i= 0$ for $2\leq i \leq 4$, and $N^j=0$ for $1\leq j \leq 6$. Thus,
Definition \ref{def:primera} 
implies that ${\mathcal B}_{\rho}$ is $3$-formal and, by Theorem \ref{fm2:criterio2}, ${\mathcal B}_{\rho}$ is formal.
\end{proof}

\begin{proposition}\label{prop:maptorus-AloffW}
Let $\rho : W_{k,l} \rightarrow W_{k,l}$ be a diffeomorphism preserving a nearly parallel $\G_2$-structure (not necessarily proper) on $W_{k,l}$.
Then, the mapping torus ${(W_{k,l})}_{\rho}$ is formal and it has a locally conformal parallel $\Spin(7)$-structure.
\end{proposition}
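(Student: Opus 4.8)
The plan is to follow closely the proofs of Proposition~\ref{prop:maptorus-Berger} and Proposition~\ref{prop:ex-mappingtorus}. The locally conformal parallel $\Spin(7)$-structure needs no new argument: since $\rho^*\varphi=\varphi$, the map $\rho$ is an orientation-preserving isometry of $(W_{k,l},g_\varphi)$, so $\rho^*(\star\varphi)=\star\varphi$, and the general mapping-torus construction recalled before Proposition~\ref{prop:ex-mappingtorus} (equivalently \cite[Theorem B]{IPP}) endows $(W_{k,l})_\rho$ with the locally conformal parallel $\Spin(7)$-structure $\Omega=\varphi\wedge\nu+\star\varphi$. The substance of the proof is therefore the formality claim.

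To prove formality I would first record that, by Theorems~\ref{th:formal-AlofW1} and~\ref{th:formal-AlofW2}, $W_{k,l}$ has the minimal model of $S^2\times S^5$, so its de Rham cohomology is $H^0(W_{k,l})=H^7(W_{k,l})=\R$, $H^2(W_{k,l})=\R\langle u\rangle$ with $|u|=2$, $H^5(W_{k,l})=\R$, and all remaining groups vanish. As $W_{k,l}$ is simply connected, the mapping torus has $\pi_1\cong\Z$ and hence is a nilpotent space, to which the minimal-model formalism and the $s$-formality criterion of Theorem~\ref{fm2:criterio2} apply. Because $H^2(W_{k,l},\Z)\cong\Z$, the automorphism $\rho^*$ acts on $H^2$ as multiplication by some $\epsilon\in\{+1,-1\}$. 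I would then feed this into the mapping-torus exact sequence \cite[Lemma 12]{BFM} recalled in this section to compute $H^\ast((W_{k,l})_\rho)$ through degree $4$: one has $H^4=0$ in all cases, while for $\epsilon=+1$ one gets $H^1=\langle[\nu]\rangle$, $H^2=\langle u\rangle$, $H^3=\langle[\nu]\wedge u\rangle$, and for $\epsilon=-1$ one gets $H^1=\langle[\nu]\rangle$ and $H^2=H^3=0$.

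From this I would reconstruct the minimal model through degree $3$ and check $3$-formality, after which Theorem~\ref{fm2:criterio2} (here $\dim=8=2\cdot 4$) yields formality. For $\epsilon=-1$ the model through degree $3$ is just $(\bigwedge(a),0)$ with $|a|=1$; there are no generators in degrees $2$ and $3$, so $N^{\leq3}=0$ and $3$-formality holds trivially. For $\epsilon=+1$ the model through degree $3$ is $(\bigwedge(a,b,x),d)$ with $|a|=1$, $|b|=2$, $|x|=3$, $da=db=0$ and $dx=b^2$, exactly as in Proposition~\ref{prop:ex-mappingtorus}; the generator $x$, which kills $b^2$, is forced by the vanishing of $H^4$. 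In the notation of Definition~\ref{def:primera} one has $C^1=\langle a\rangle$, $C^2=\langle b\rangle$, $C^3=0$, $N^1=N^2=0$ and $N^3=\langle x\rangle$. Conditions (1) and (2) are immediate, and for (3) one observes that the ideal $I(N^{\leq3})$ equals $x\cdot\bigwedge(a,b)$, and $d(x\,m)=b^2 m$ for $m\in\bigwedge(a,b)$ vanishes only when $m=0$; hence the sole closed element of the ideal is $0$, which is exact.

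The one real subtlety, and the step I would handle with most care, is that $\rho$ is not prescribed explicitly, so I cannot assume $\rho^*=\mathrm{id}$ on $H^2$ and must run the computation for both signs $\epsilon=\pm1$. The crux is condition (3) of $3$-formality in the case $\epsilon=+1$, which I expect to reduce, as indicated, to the injectivity of multiplication by $b^2$ on $\bigwedge(a,b)$.
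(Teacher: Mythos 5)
Your proposal is correct and follows essentially the same route as the paper: the same appeal to \cite[Theorem B]{IPP} for the locally conformal parallel $\Spin(7)$-structure, the same case split on the action of $\rho^*$ on $H^2(W_{k,l})$, the same computation of $H^{\leq 4}\bigl((W_{k,l})_\rho\bigr)$ via the mapping-torus sequence together with the reconstruction of the degree-$\leq 3$ part of the minimal model, and the same conclusion via $3$-formality and Theorem~\ref{fm2:criterio2}. The only (harmless) difference is in verifying condition (3) when $\rho^*\xi=\xi$: the paper enumerates the possible closed elements of $I(N^{\leq 3})$ in degrees $5$--$8$ and checks none is closed, whereas you observe directly that $I(N^{\leq 3})=x\cdot\bigwedge(a,b)$ and that $d(xm)=b^2m$ is injective in $m$, which is a slightly cleaner packaging of the same verification.
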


\begin{proof} 
If $\rho : W_{k,l} \rightarrow W_{k,l}$ is a diffeomorphism preserving a nearly parallel $\G_2$-structure on $W_{k,l}$, the mapping torus 
${(W_{k,l})}_{\rho}$ has a locally conformal parallel $\Spin(7)$-structure by \cite[Theorem B]{IPP}.

In order to prove that ${(W_{k,l})}_{\rho}$ is formal we proceed as follows. By 
Theorem \ref{th:formal-AlofW1} and Theorem \ref{th:formal-AlofW2}, we know that
$W_{k,l}$ and the product manifold
$S^2 \times S^5$ have the same minimal model, and hence the same de Rham cohomology. So, the de Rham cohomology groups of $W_{k,l}$ are: 
\begin{align*}
&H^0(W_{k,l})=\langle 1\rangle\,, \quad  H^1(W_{k,l})=\,H^3(W_{k,l})\,=\,H^4(W_{k,l})\,=\,H^6(W_{k,l})\,=\,0\,, \nonumber\\
&H^2(W_{k,l})= \langle \xi \rangle\,, \quad   H^5(W_{k,l})= \langle \tau \rangle\,,\quad   H^7(W_{k,l})= \langle \xi\wedge \tau \rangle. \nonumber 
\end{align*}
Let us consider the map $\rho^* : H^2(W_{k,l}) \rightarrow H^2(W_{k,l})$. It is clear that either $\rho^*(\xi )\not=\xi$ or $\rho^*(\xi )=\xi$.

We deal first with the possibility that $\rho^*(\xi )\not=\xi$. Then, the cohomology of ${(W_{k,l})}_{\rho}$ up to the degree $4$ is
$$
H^{0}({(W_{k,l})}_{\rho})=\langle 1\rangle\,, \qquad H^{1}({(W_{k,l})}_{\rho})=\langle [\nu] \rangle\,, 
 \qquad H^i({(W_{k,l})}_{\rho}) =0\,, \,\, 2 \leq i \leq 4.
$$
Therefore, the minimal model of ${(W_{k,l})}_{\rho}$ must be a differential graded algebra
$(\bigwedge V,d)$, being $\bigwedge V$ the free algebra of the form
$\bigwedge V=\bigwedge(a)\otimes \bigwedge V^{\geq 5}$, where $|a|=1$,
and $d$ is defined by $da=0$.
According to Definition~\ref{def:primera}, we get  $C^1=\la a\ra$, $C^i= 0$ for $2\leq i \leq 4$, and  
$N^j=0$ for $1 \leq j\leq 4$. Hence, by Definition \ref{def:primera}, ${(W_{k,l})}_{\rho}$ is $3$-formal and, 
by Theorem \ref{fm2:criterio2},
${(W_{k,l})}_{\rho}$ is formal.

Suppose now that $\rho^*(\xi )=\xi$. In this case, the cohomology of ${(W_{k,l})}_{\rho}$ up to the degree $4$ is
\begin{align*}
&H^0({(W_{k,l})}_{\rho})=\langle 1\rangle\,, \qquad \qquad \, H^{1}({(W_{k,l})}_{\rho})=\langle [\nu] \rangle\,, \qquad  H^{2}({(W_{k,l})}_{\rho})=\langle \xi \rangle\,,  \nonumber\\
&H^3({(W_{k,l})}_{\rho})=\langle [\nu] \wedge \xi \rangle\,, \qquad  H^{4}({(W_{k,l})}_{\rho})=0. \nonumber 
\end{align*}
Thus, the minimal model of ${(W_{k,l})}_{\rho}$ must be a differential graded algebra
$(\bigwedge V,d)$, being $\bigwedge V$ the free algebra of the form
$\bigwedge V=\bigwedge(a, b, x)\otimes \bigwedge V^{\geq 4}$, where $|a|=1$, $|b|=2$, $|x|=3$,
and $d$ is defined by $da=db=0$ and $dx=b^{2}$.
According to Definition~\ref{def:primera}, we get 
$N^j=0$ for $j\leq 2$, thus ${(W_{k,l})}_{\rho}$ is $2$-formal.
Moreover, ${(W_{k,l})}_{\rho}$ is $3$-formal. In  fact, take $\alpha\in I(N^{\leq 3})$ a closed
element in $\bigwedge V$. As $H^*(\bigwedge V)=H^*({(W_{k,l})}_{\rho})$ has cohomology in all the degrees except 
in degree $4$, and since $|\alpha| \geq 4$,
it must be $|\alpha|=5, 6, 7, 8$. If $|\alpha|=5$, then $\alpha=b\cdot\,x$ (up to non-zero scalar), 
which is not closed. If $|\alpha|=6$, then $\alpha=a\cdot\,b\cdot\,x$, which is not closed either.
If $|\alpha|=7$, then $\alpha=b^2\cdot\,x$, and if $|\alpha|=8$, then $\alpha=a\cdot\,b^2\cdot\,x$, but $\alpha$ is not closed in either case.
So,  according to 
Definition \ref{def:primera}, 
${(W_{k,l})}_{\rho}$ is $3$-formal and,
by Theorem \ref{fm2:criterio2}, ${(W_{k,l})}_{\rho}$ is formal.
\end{proof}


\subsection*{Acknowledgements}  We are very grateful to Dieter Kotschick and  the anonymous referees for useful comments.
The first author was partially supported 
by the Basque Government Grant IT1094-16 and by the Grant PGC2018-098409-B-100 of 
the Ministerio de Ciencia, Innovaci\'on y Universidades of Spain. The second author is  supported by the project 
PRIN  \lq \lq Real and complex manifolds: Topology, Geometry and Holomorphic Dynamics", by GNSAGA of INdAM and by a grant from the Simons Foundation (\#944448).
The fourth author was partially supported by Project MINECO (Spain)  PID2020-118452GB-I00.


\begin{thebibliography}{FL}

\bibitem{AS}
B,~Alexandrov, U.~Semmelmann, 
{\em Deformations of nearly parallel $\G_2$-structures}, 
Asian J.\ Math.\ {\bf 16} (2012), no. 4, 713--744.

\bibitem{AW}
 S.~Aloff, N.~Wallach, 
{\em An infinite family of distinct $7$-manifolds admitting positively curved Riemannian structures},
Bull.\ Amer.\ Math.\ Soc.\ {\bf 81} (1975), 93--97

\bibitem{AT}
M.~Amann, I.~Taimanov, 
{\em On the formality of nearly K\"ahler manifolds and of Joyce's examples in $\G_2$ holonomy}.
Preprint {\tt arXiv:2012.10915 [math.DG]}.

\bibitem{arnold}
V.~Arnol'd, Some remarks on symplectic monodromy of {M}ilnor fibrations.
In {\em The {F}loer memorial volume}, volume 133 of {\em Progr.
Math.}, pages 99--103. Birkh\"{a}user, Basel, 1995.

\bibitem{BallM}
G.~Ball, J.~Madnick, {\em The Mean Curvature of First-Order Submanifolds in 
Exceptional Geometries with Torsion}, Ann. Global Anal. and Geom. {\bf 59} (2021), no. 1, 27--67.

\bibitem{BallM2}
G.~Ball, J.~Madnick, {\em Associative submanifolds of the Berger space}.
Preprint {\tt arXiv: 2003.13169 [math.DG]}, to appear in  {Comm. Anal. Geom.}.

\bibitem{BallM3}
G.~Ball, J.~Madnick, {\em Associative Submanifolds of Squashed 3-Sasakian
Manifolds}.  Preprint {\tt arXiv: 2208.10622 [math.DG]}.

\bibitem{BallOl}
G.~Ball, G.~Oliveira, {\em Gauge theory on Aloff--Wallach spaces},
Geom. Topol. {\bf 23} (2019), no. 2, 685--743.

\bibitem{BFGK}
H.~Baum, T.~Friedrich, R.~Grunewald, I.~Kath, {\em Twistors and
Killing spinors on Riemannian manifolds}, Seminarberichte Nr. 108,
Humboldt-Universit\"at zu Berlin, 1990.

\bibitem{BBFMT}
G.~Bazzoni, I.~Biswas, M.~Fern\'andez, V.~Mu\~noz, A.~Tralle, 
{\em Homotopic properties of K\"ahler orbifolds},
Special Metrics and Group Actions in Geometry  (Rome, 2015),  23--58, 
Springer INdAM {\bf 23}, 2017.

\bibitem{BFM}
G. Bazzoni, M. Fern\'andez, V.  Mu\~{n}oz, {\em Non-formal co-symplectic manifolds}, 
 Trans. Amer. Math. Soc.  {\bf 367} (2015), 4459-4481.

\bibitem{Berger}
M.~Berger, {\em Les vari\'et\'es riemanniennes homog\`enes normales simplement connexes \`a courbure strictement positive},
Ann. Scuola Norm. Sup. Pisa Cl. Sci. (3) {\bf 15} (1961), 179--246.

\bibitem{BFMT}
I.~Biswas, M.~Fern\'andez, V.~Mu\~noz, A.~Tralle, 
{\em On formality of Sasakian manifolds}, 
 J. Topol. {\bf 9} (2016), no. 1, 161--180.

\bibitem{Blair}
D.~Blair, Riemannian geometry of contact and symplectic manifolds,
{\em Progress in Mathematics 203}, Birkh\"{a}user Boston, Inc., Boston, MA, 2002.

\bibitem{BGal}
C.~Boyer, K.~Galicki, {\em $3$-Sasakian manifolds}, 
Surveys Diff. Geom. {\bf 6} (1999), 123--184.

\bibitem{BG-2000}
C.~Boyer, K.~Galicki, {\it On Sasakian--Einstein geometry},
Internat. J. Math. {\bf 11} (2000), 873--909.

\bibitem{BG2} 
C.~Boyer, K.~Galicki,  {\em Sasakian Geometry},
Oxford Univ. Press, Oxford, 2007.


\bibitem{BGMR} 
C.~Boyer, K.~Galicki, B.~Mann, E.~Rees,
{\em Compact $3$-Sasakian $7$-manifolds with arbitrary second Betti number},
Invent. Math. {\bf 131} (1998), 321--344.

\bibitem{B} 
R.~Bryant, {\em Metrics with exceptional holonomy}, 
Ann Math. {\bf 126}  (1987), 525--576.

\bibitem{CMS}
F.~Cabrera, M.~Monar, A.~Swann, {\em Classification of $\G_2$-structures},
J. London Math. Soc. (2) {\bf 53} (1996), no. 2, 407--416. 

\bibitem{DGMS}
P.~Deligne, P.~Griffiths, J.~Morgan, D.~Sullivan,
 {\em Real homotopy theory of K\"ahler manifolds},
Invent. \ Math. {\bf 29} (1975), 245--274.

\bibitem{EZ}
C.~Escher, W.~Ziller,
 {\em Topology of non-negatively curved manifolds},
Ann. Global Anal. Geom. {\bf 46} (2014), no. 1, 23-55.

\bibitem{FHT} 
Y.~F\'{e}lix, S.~Halperin, J.-C.~Thomas, {\it Rational
Homotopy Theory}, Springer, 2002.

\bibitem{Fer}
M.~Fern\'andez,  {\em A classification of Riemannian manifolds with structure group $\Spin(7)$}, 
Ann. Mat. Pura Appl.  {\bf 143}  (1986), 101--122.

\bibitem{FIM} 
M.~Fern\'andez, S.~Ivanov, V.~Mu\~noz, 
{\em Formality of $7$-dimensional $3$-Sasakian manifolds}, 
Ann. Scuola Norm. Sup. Pisa Cl. Sci. (5) {\bf 19} (2019), no. 1, 297--309.

 \bibitem{FM}
 M.~Fern\'andez, V.~Mu\~noz, {\em Formality of Donaldson submanifolds}, 
Math. Z. {\bf 250} (2005), 149--175.

 \bibitem{FMS}
 M.~Fern\'andez, V.~Mu\~noz, J.~S\'anchez, 
{\em On $\SO(3)$-bundles over the Wolf spaces}, 
Rev. Mat. Iberoamericana {\bf 36} (2020), 159--193.

\bibitem{FK}
T.~Friedrich, I.~Kath, 
{\em $7$-Dimensional compact Riemannian manifolds with Killing spinors}, 
 Comm. Math. Phys. {\bf 133}  (1990), 543--561.
 
\bibitem{FKMS}
T.~Friedrich, I.~Kath, A.~Moroianu, U.~Semmelmann, 
{\em On nearly parallel $\G_2$-structures}, 
 J. Geom. Phys. {\bf 23} (1997), 259--286. 
 
 \bibitem{GS}
K.~Galicki, S.~Salamon, {\em On Betti Numbers of $3$-Sasakian Manifolds},
Geom. Dedicata {\bf 63} (1996), 45--68.

\bibitem{GKS}
 S.~Goette, N.~Kitchloo, K.~Skankar,  
{\em Diffeomorphism type of the Berger space $\SO(5)/\SO(3)$}, 
Amer. J. Math. {\bf 126} (2004), 395--416.

\bibitem{Gr69}
A.~Gray, {\em Vector cross products on manifolds}, 
Trans. Amer. Math. Soc. {\bf 141} (1969), 465--504.

\bibitem{Gray71}
A.~Gray, {\em Weak holonomy groups}, Math.  Z.  {\bf 123} (1971),  290--300.

\bibitem{GuS}
S.~Gukov, J.~Sparks, {\em $M$-Theory on $\Spin(7)$-manifolds}, 
Nuclear Phys. B 625 (2002), no.\ 1-2, 3--69. 

\bibitem{Halperin}
 S.~Halperin, Lectures on minimal models,
{\em M\'em. Soc. Math. France} {\bf 230}, 1983.

\bibitem{HarveyLawson}
R.~Harvey, H.~Lawson, {\em Calibrated geometries},
Acta Math. {\bf 148} (1982) 47--157.

\bibitem{IPP}
S.~Ivanov, M.~Parton, P.~Piccinni, {\em Locally conformal parallel $\G_2$ and $\Spin(7)$ manifolds}, 
Math. Res. Lett. {\bf 13} (2006), 167--177.

\bibitem{Joyce}
D.~Joyce, {\em Compact manifolds with special holonomy}, Oxford University Press, 2000.

\bibitem{Kawai}
K.~Kawai, {\em Some associative submanifolds of the squashed 7-sphere}, 
Q. J. Math. {\bf 66} (2015), 861--893.

\bibitem{Kawai2}
K.~Kawai, {\em Deformations of homogeneous associative submanifolds in nearly parallel $\G_2$-manifolds}, 
Asian J. Math. {\bf 21} (2017), 429--462.

\bibitem{Kobayashi}
S.~Kobayashi, {\em  Principal fibre bundles with the $1$-dimensional toroidal group},
Tohoku Math. J. (2), {\bf 8} (1956), 29--45.

\bibitem{K}
D. Kotschick, {\em On products of harmonic forms},
Duke Math. J. 107 (2001), 521--531.

\bibitem{KT1}
D. Kotschick, S. Terzi\'{c}, {\em On formality of generalised symmetric spaces},
Math. Proc. Cam. Phil. Soc. {\bf 134} (2003), 491--505.


\bibitem{KS}
M.~Kreck, S.~Stolz, {\em Some nondiffeomorphic homeomorphic homogeneous
$7$-manifolds with positive sectional curvature}, 
J. Differ. Geom. {\bf 33} (1991), 465--486.
A correction on {\em Some nondiffeomorphic homeomorphic homogeneous
$7$-manifolds with positive sectional curvature}, 
J. Differ. Geom. {\bf 49} (1998), 203--204.

\bibitem{LM}
B.~Lawson, M.~Michelsohn, {\em Spin Geometry,} Princeton University Press, 1989.

\bibitem{Lotay}
J.~Lotay, {\em Associative submanifolds of the 7-sphere}, 
Proc. Lond. Math. Soc. {\bf 105} (2012), 1183--1214.

\bibitem{M2015}
T.~Moriyama, {\em Special Legendrian submanifolds in toric Sasaki--Einstein manifolds},
New York J. Math. {\bf 21} (2015), 465-484.

\bibitem{NS} 
P.~Nagy, U.~Semmelmann, {\em The $\G_2$ geometry of $3$-Sasaki structures}.
Preprint {\tt arXiv: 2101.04494 [math.DG]}.

\bibitem{N-Miller}
J.~Neisendorfer, T.~Miller, {\em Formal and coformal spaces}, 
Illinois J. Math. {\bf 22} (1978), 565--580.

\bibitem{Ni}
Y.~Nikonorov, {\em Compact homogeneous Einstein $7$-manifolds},
Geom. Dedicata {\bf 109} (2004), 7--30.

\bibitem{PP}
D.~Page, C.~Pope, {\em New squashed solutions of $d = 11$ supergravity}, Phys. Lett. B {\bf 147} (1984) 55--60.

\bibitem{Reidelgeld}  
F.~Reidegeld, {\em Spaces admitting homogeneous $G_2$-structures,}  Differential Geom. Appl. {\bf 28} (2010), no. 3, 301--312.

\bibitem{seidel}
P. Seidel, {\it Floer homology and the symplectic isotopy problem.} D.Phil.\
Thesis, Oxford 1997.

\bibitem{siu}
Y-T.~Siu
{\em The existence of K\"ahler--Enstein metrics on manifolds with positive
anticanonical line bundle and a suitable finite symmetry group,}
Ann. of Math. {\bf 127} (1988), 585--627.

\bibitem{TY}
G.~Tian, S-T.~Yau, {\em K\"ahler--Einstein metrics on complex surfaces with $c_1>0$,}
Comm. Math. Phys. {\bf 112} (1987), 175--203.

\bibitem{Tischler} 
D.~Tischler, {\em On fibering certain foliated manifolds over $S^1$,}
Topology {\bf 9}  (1970), 153--154.

\bibitem{C.T.C.Wall}
C.~Wall, {\it Real forms of smooth del Pezzo surfaces,}
J. Reine Angew. Math. {\bf 375/376} (1987) 47--66.

\bibitem{Wang}  
M.~Wang, {\em Einstein metrics from symmetry and bundle constructions: a sequel,}  Differential geometry, 253--309,
Adv. Lect. Math. {\bf 22}, Int. Press, Somerville, MA, 2012. 

\end{thebibliography}
\end{document}